\newtheorem{thm}{Theorem}[section]
\newtheorem{theorem}[thm]{Theorem}
\newtheorem{lemma}[thm]{Lemma}
\newtheorem{proposition}[thm]{Proposition}
\newtheorem{definition}[thm]{Definition}
\newcommand{\beq}{\begin{equation}}
\newcommand{\eeq}{\end{equation}}
\newcommand{\beqa}{\begin{eqnarray}}
\newcommand{\eeqa}{\end{eqnarray}}
\newcommand{\beqas}{\begin{eqnarray*}}
\newcommand{\eeqas}{\end{eqnarray*}}
\newcommand{\bi}{\begin{itemize}}
\newcommand{\ei}{\end{itemize}}
\newcommand{\argmin}{\mathrm{argmin}\,}
\title{Parallel Polyhedral Projection Method for the Convex Feasibility Problem}
\date{}
\begin{document}

\maketitle

\begin{center}
\begin{tabular}{ccc}

\begin{tabular}{c}
Pablo Barros\\
School of Applied Mathematics, FGV\\
Praia de Botafogo, Rio de Janeiro, Brazil\\
{\tt pabloacbarros@gmail.com}
\end{tabular}\\
\begin{tabular}{c}
Vincent Guigues\\
School of Applied Mathematics, FGV\\
Praia de Botafogo, Rio de Janeiro, Brazil\\
{\tt vincent.guigues@fgv.br}
\end{tabular}
\begin{tabular}{c}
Roger Behling\\
Department of Mathematics, UFSC\\
Blumenau, SC, Brazil\\
{\tt rogerbehling@gmail.com}\\
\end{tabular}
\end{tabular}
\end{center}

\begin{abstract}

In this paper, we introduce and study the Parallel Polyhedral Projection Method (3PM) for finding a point in the intersection of a finite number of closed convex sets. Each iteration consists of two phases: the first involves individual parallel projections, either exact or approximate, onto the target sets; the second performs an exact or approximate projection onto the polyhedron formed by the intersection of supporting half-spaces constructed from these projections. Despite its simplicity, the 3PM strategy appears to be novel, as previous work in the literature has primarily focused on parallel procedures such as Cimmino’s method. Numerically, we observed that when the number of target sets exceeds two, 3PM outperforms several classical and more recent projection-type methods. Theoretically, we establish global convergence of 3PM without regularity assumptions. Under an additional Slater condition or an error bound assumption, we prove linear convergence, even when using approximate projections in both phases. Furthermore, under additional geometric conditions, we show that the method achieves superlinear convergence.

{\bf Key words.} Convex feasibility problem, Projection and reflection operators, Optimization algorithms
		\\
		
		% 	{\bf Mathematics Subject Classification (2010)} 
		% 	49M37 $\cdot$ 65K05 $\cdot$ 68Q25 $\cdot$ 90C25 $\cdot$ 90C30 $\cdot$ 90C60
		{\bf AMS subject classifications.} 49M27, 65K05, 65B99, 90C2.
  
\end{abstract}

\section{Introduction}\label{sec1}

The Convex Feasibility Problem involves finding a point within the intersection $U \neq \emptyset$ of a finite collection of closed convex sets \( \{ U_i \}_{i=1}^m \), that is
\begin{equation}
        \mbox{find } u^{\star} \in U := \bigcap_{i=1}^m U_i. \label{CFP}            
\end{equation}

Finding a point in the intersection of finitely many closed convex sets is a fundamental task that serves as a modeling paradigm in numerous computational settings. Projection algorithms have long been prominent for these problems, dating back to Cimmino’s seminal 1938 method of simultaneous (parallel) projections \cite{Cimmino1938}. Cimmino’s algorithm introduced the idea of averaging projections onto all constraint sets at each iteration, an approach that proved influential in early applications such as computerized tomography \cite{Combettes1996}. Another cornerstone is the Douglas–Rachford (DR) method, an iterative reflection scheme that has become a popular solver for convex feasibility and related optimization problems \cite{Bauschke2017, BorweinSims2011, AragónArtacho04072019, Bauschke2016, Phan2016, BauschkeMoursi2016}. 

In recent years, novel enhancements of these classical techniques have emerged \cite{AragonArtacho2018, math12050675, doi:10.1137/21M1453475, doi:10.1137/140961444}. Notably, the circumcentered-reflection method (CRM) was proposed as a geometrically inspired acceleration of projection algorithms, computing the circumcenter of several reflected points to jump closer to the feasibility solution \cite{crm1, BehlingBelloCruzIusemSantos2024, BauschkeOuyangWang2022}. The CRM can dramatically outperform the standard DR method in certain settings (e.g. affine subspaces), enjoying linear convergence under appropriate conditions \cite{crm_block, crm2}. Beyond CRM, many recent works have developed randomized and block-iterative projection schemes that leverage parallel computing architectures and offer improved convergence guarantees \cite{doi:10.1137/18M1167061, doi:10.1137/23M1567503, Kolobov2022}. 

The continued interest in projection methods \cite{scolnik2008incomplete, shen2022penalized, li2016douglas} stems from their wide-ranging applications, including gene regulatory network inference \cite{Wang2017}, large-scale signal processing \cite{combettes2011proximal}, and even combinatorial optimization problems reformulated as feasibility problems \cite{AragonArtacho2014}. Within this rich landscape of algorithms, the \emph{Parallel Polyhedral Projection Method} (3PM) advances the state of the art by adopting a fully simultaneous projection scheme in the spirit of Cimmino’s method, while incorporating modern geometric strategies inspired by the Douglas–Rachford and Circumcentered Reflection methods to enhance robustness and accelerate convergence.

In this work, we introduce the 3PM algorithm, along with its inexact variant A3PM. Each iteration of both methods is structured into two phases:
\begin{enumerate}
  \item Compute the (exact or approximate) projections \( P_{U_1}(x), P_{U_2}(x), \ldots, P_{U_m}(x) \) of the current iterate \( x \) onto each of the sets \( U_i \);
  \item Define the next iterate as the (exact or approximate) projection of \( x \) onto the polyhedron \( \Omega := \bigcap_{i=1}^m S_i \), where each supporting half-space is given by
  \[
  S_i := \left\{ z \in \mathbb{R}^n \mid \left\langle x - P_{U_i}(x), z - P_{U_i}(x) \right\rangle \le 0 \right\}.
  \]
\end{enumerate}
Although conceptually simple, this two-phase strategy is new, to the best of our knowledge.

The main contributions of this work are as follows:\\
\par {\textbf{Parallel Polyhedral Projection Method.}} We propose the \emph{Parallel Polyhedral Projection Method} (3PM), a new approach for solving the convex feasibility problem \eqref{CFP}. At each iteration, 3PM performs parallel projections onto the target sets and then computes a single projection onto a polyhedron defined by affine half-spaces supporting the sets at the projected points. We prove that 3PM converges globally without requiring any regularity assumptions. Moreover, if the intersection \( U \) satisfies an error bound condition, which is guaranteed when \( \operatorname{int}(U) \neq \emptyset \), then the method achieves a linear convergence rate. Under additional geometric conditions, we show that 3PM exhibits superlinear convergence.\\

\par {\textbf{Inexact Variant A3PM.}} We introduce A3PM, an inexact version of 3PM in which both the parallel and polyhedral projections may be computed approximately. This generalization retains the convergence guarantees of 3PM: global convergence is preserved even without regularity, and linear convergence is ensured under standard error bound conditions. This makes A3PM suitable for applications where exact projections are impractical or costly. \\

\par {\textbf{Numerical experiments.}} We present numerical experiments on a variety of convex feasibility problems. The results highlight the competitive performance of 3PM and A3PM in comparison to classical and modern projection methods such as CRM, P-CRM, and F-SPM, particularly in settings with more than two sets. \\

This paper is organized as follows. Section \ref{sec2} is devoted entirely to the Parallel Polyhedral Projection Method (3PM): we describe its algorithmic structure, analyze its convergence properties, and relate it to classical projection methods. Section \ref{sec3} introduces A3PM, an inexact variant of 3PM that allows for approximate projections in both algorithmic phases. We establish convergence guarantees for A3PM under mild assumptions, and demonstrate that its theoretical properties mirror those of the exact version. Finally, in Section \ref{sec4}, we present numerical experiments that benchmark the performance of 3PM and A3PM against a selection of classical and recent projection algorithms.

\subsection{Basic definitions and notation}

We denote by \( \mathbb{R}^n \) the standard \( n \)-dimensional Euclidean space, equipped with the inner product \( \langle \cdot, \cdot \rangle \) and associated norm \( \| \cdot \| \). 
Given a nonempty, closed convex set \( C \subset \mathbb{R}^n \), the projection of a point \( x \in \mathbb{R}^n \) onto \( C \) is denoted by \( P_C(x) \), and is defined as the unique point in \( C \) closest to \( x \), that is, 
\[P_C(x) := \underset{y \in C}{\argmin} \|x - y\|.\] 
The distance from \( x \) to \( C \) is denoted by 
\[ \operatorname{dist}(x, C) := \inf_{y \in C} \|x - y\|.\]

Throughout the paper, we assume that the sets \( U_1, \dots, U_m \subset \mathbb{R}^n \) are closed, convex, and nonempty. Their intersection is denoted by \( U := \bigcap_{i=1}^m U_i \), which we assume is  nonempty.

\section{Parallel Polyhedral Projection Method for convex feasibility problems (3PM)}\label{sec2}

\subsection{3PM method}

In this section, we describe a new cutting plane method
for the convex feasibility problem \eqref{CFP}.
We start from an arbitrary point $x_1=x$ and for iteration
$k \geq 1$, given $x_{k}$, we compute 
the projection
$P_{U_i}(x_k)$ of $x_k$ onto $U_i$ for every $i$.
Then let
\begin{equation}\label{sik}
S_{ik} = \Big\{z : \left \langle x_k - P_{U_i}(x_k),z-P_{U_i}(x_k) \right \rangle
\leq 0 \Big\}
\end{equation}  
be a half space
that contains $U_i$, built from a separating hyperplane
for $U_i$, and let 
\begin{equation}\label{omk}
\Omega_k=\bigcap_{i=1}^m S_{ik}.
\end{equation}
We compute the next iterate $x_{k+1}$
as 
$x_{k+1}=P_{\Omega_k}(x_k).
$

The steps of 3PM are summarized below.

\noindent\rule[0.5ex]{1\columnwidth}{1pt}
	
	3PM
	
	\noindent\rule[0.5ex]{1\columnwidth}{1pt}
	\begin{itemize}
		\item [0.] Let  $k=1$, $x_1=x \in \mathbb{R}^n$, and $U_1, U_2, \ldots, U_m$, be closed convex sets. If $x_1 \in U$ stop. Otherwise go to step 1.
	    \item[1.] Compute $P_{U_1}(x_k)$, $P_{U_2}(x_k)$, $\ldots$, $P_{U_m}(x_k)$.
		\item[2.] Define $S_{ik}$ by \eqref{sik} and
        $\Omega_k$ by
        \eqref{omk}. Compute 
  $$
  P_{\Omega_k}(x_k).
  $$
		\item[3.] Do $x_{k+1} \leftarrow P_{\Omega_k}(x_k)$, $k \leftarrow k+1$. If $x_{k+1} \in U$ stop, otherwise
go to step 1.
	\end{itemize}
	\rule[0.5ex]{1\columnwidth}{1pt}

 3PM computes the sequence of iterates $(x_k)$ given by   
    \begin{equation}\label{defpomk}
x_{k+1}=P_{\Omega_k}(x_k)
\end{equation}
for $\Omega_k$ given by \eqref{omk}.

This two-step process decouples the iteration into a first phase of identifying plausible corrections via individual projections, followed by a second phase in which the algorithm consolidates this information by projecting onto the polyhedron \(\Omega_k\). This region can be seen as a local outer approximation of the feasible set, built from first-order information about each \(U_i\). The final projection thus performs a coordinated movement informed by all constraints, offering a sharper descent than methods based on single-set updates.

Figure \ref{fig:3PM} illustrates the computation of an iteration of 3PM for 3 sets $U_1$, $U_2$, and $U_3$
in $\mathbb{R}^2$. In this figure we represent:
\begin{itemize}
\item convex sets $U_1$, $U_2$, and $U_3$ and their intersection $U$;
\item an arbitrary initial point $x$;
\item the projections $P_{U_1}(x)$, $P_{U_2}(x)$, and
$P_{U_3}(x)$ of $x$ onto respectively $U_1$, $U_2$, and
$U_3$;
\item half-spaces $S_1$, $S_2$, and $S_3$ which play the role of half-spaces $S_{ik}$ in 3PM; we dropped the iteration index
for simplicity;
\item the intersection $\Omega=S_1 \cap S_2 \cap S_3$ which plays the role of $\Omega_k$ in 3PM;
\item the next iterate which is the projection 
$P_{\Omega}(x)$ of $x$ onto $\Omega$ which is
closer (as shown in Lemma \ref{lemmfejer} below)
than $x$ (much closer in this example) to any point
in the intersection $U:=U_1\cap U_2\cap U_3$.
\end{itemize}

\begin{figure}
    \centering
\includegraphics[width=0.95\textwidth]{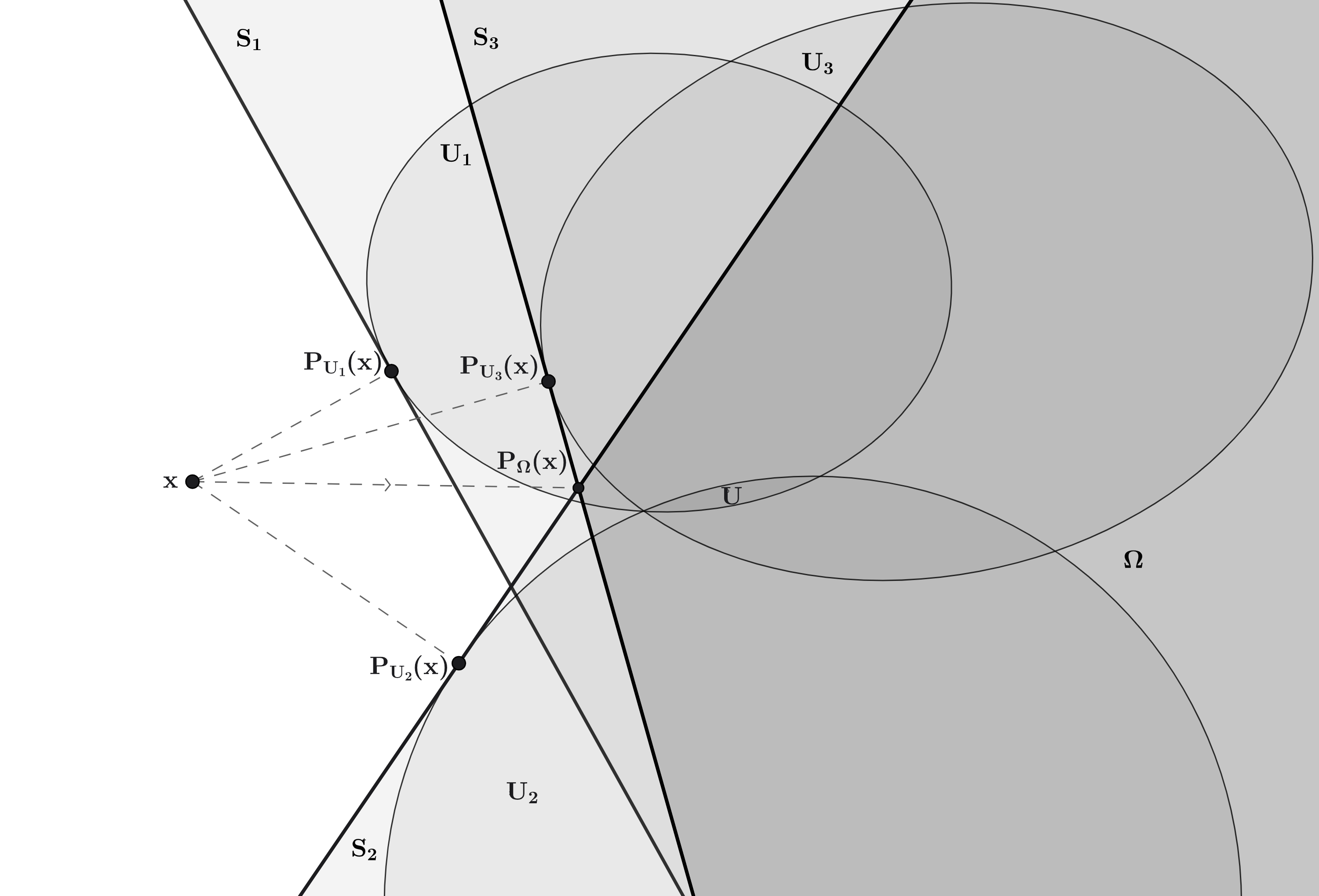}
    \caption{An iteration of 3PM}
    \label{fig:3PM}
\end{figure}

\subsection{Convergence analysis}

It should be noted that if $x_k$ already belongs to $U_i$ then 
$S_{i k}=\mathbb{R}^n$.

\subsubsection{Convergence proof}

We begin by recalling a fundamental property of projections onto closed convex sets, which characterizes the projection as the unique point satisfying a first-order optimality condition. This inequality plays a central role in demonstrating geometric properties of projection-based algorithms such as 3PM.

\begin{proposition}\label{propproj} \emph{(Projection characterization)\textbf{.}} Let $X$ be  a closed nonempty convex set. Then the projection
$P_X(z)$ of $z \in \mathbb{R}^n$ onto $X$ satisfies
$$
\langle z-P_{X}(z),x-P_{X}(z)\rangle \leq 0, \quad \forall \, x \in X.
$$
\end{proposition}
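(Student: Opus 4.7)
The plan is to invoke the defining optimality of $P_X(z)$ as the minimizer of $y\mapsto\tfrac12\|z-y\|^2$ over the convex set $X$, and to extract the first-order inequality by perturbing along a feasible direction. Concretely, I would fix an arbitrary $x\in X$, and for $t\in[0,1]$ consider the convex combination
$$y_t=(1-t)P_X(z)+tx=P_X(z)+t\bigl(x-P_X(z)\bigr).$$
Convexity of $X$ gives $y_t\in X$ for every $t\in[0,1]$, so $y_t$ is a feasible competitor against the minimizer $P_X(z)$.

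Next I would define $\phi(t)=\tfrac12\|z-y_t\|^2$ and use that, by definition of the projection, $\phi$ attains its minimum over $[0,1]$ at $t=0$. Expanding,
$$\phi(t)=\tfrac12\|z-P_X(z)\|^2-t\,\langle z-P_X(z),x-P_X(z)\rangle+\tfrac{t^2}{2}\|x-P_X(z)\|^2,$$
so $\phi'(0)=-\langle z-P_X(z),x-P_X(z)\rangle$. Since $t=0$ is a minimum on $[0,1]$, we must have $\phi'(0)\ge 0$, which yields exactly $\langle z-P_X(z),x-P_X(z)\rangle\le 0$, as claimed.

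There is essentially no obstacle here: the argument is the standard variational-inequality proof, and the only thing to be careful about is that the perturbation $y_t$ genuinely lies in $X$ (guaranteed by convexity) and that the one-sided derivative at $t=0$ provides the correct sign. An alternative and slightly slicker route would be to write $\|z-y_t\|^2\ge\|z-P_X(z)\|^2$ directly and pass to the limit $t\downarrow 0$ after dividing by $t$, which avoids invoking differentiability explicitly; either version produces the inequality in a single short step.
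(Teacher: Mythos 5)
Your argument is correct and complete: the perturbation $y_t=(1-t)P_X(z)+tx$ lies in $X$ by convexity, and the sign of the one-sided derivative of $\phi$ at $t=0$ gives exactly the claimed inequality. The paper does not prove this proposition itself but simply cites Theorem 3.16 of Bauschke--Combettes; what you have written is precisely the standard variational argument found there, so there is nothing further to reconcile.
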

\begin{proof}
    See, e.g., Theorem 3.16 in \cite{Bauschke2017}.
\end{proof}

We now turn to the behavior of the sequence \( (x_k) \) generated by 3PM. The following lemma shows that this sequence is Fejér monotone with respect to the feasible region \( U \), meaning that the distance from each iterate to any point in \( U \) does not increase. This property provides a key tool for establishing convergence results.

\begin{lemma}\label{lemmfejer} \emph{(Fejér monotonicity of 3PM)\textbf{.}}
For any starting point $x_1 \in \mathbb{R}^n$, the sequence $(x_k)$ generated by 3PM satisfies 
\begin{equation}\label{fej1}
\|x_{k}-u\|^2 \geq
\|x_{k}-x_{k+1}\|^2 + \|x_{k+1}-u\|^2
\end{equation}
and therefore 
is Fejér monotone with respect to 
$U$, i.e.,
\begin{equation}\label{xkp1xk}
\|x_{k+1}-u\| \leq \|x_{k}-u\|,\;\forall \, u \in U.
\end{equation}
\end{lemma}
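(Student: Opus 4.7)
The plan is to exploit two applications of the projection characterization in Proposition \ref{propproj}: first to show that the feasible set $U$ lies inside the polyhedron $\Omega_k$, and then to get the Pythagorean-style inequality \eqref{fej1} from the fact that $x_{k+1}$ is the projection of $x_k$ onto $\Omega_k$.

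First I would establish the containment $U \subseteq \Omega_k$. For each $i$, fix any $u \in U_i$ and apply Proposition \ref{propproj} with $X=U_i$ and $z=x_k$ to get
$$\langle x_k - P_{U_i}(x_k), u - P_{U_i}(x_k)\rangle \leq 0,$$
which is exactly the defining condition of $S_{ik}$ in \eqref{sik}, so $u \in S_{ik}$. Hence $U_i \subseteq S_{ik}$, and taking the intersection over $i$ yields $U \subseteq \bigcap_{i=1}^m S_{ik} = \Omega_k$.

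Next, since $x_{k+1} = P_{\Omega_k}(x_k)$ and $u \in U \subseteq \Omega_k$, Proposition \ref{propproj} applied with $X=\Omega_k$ and $z=x_k$ gives
$$\langle x_k - x_{k+1}, u - x_{k+1}\rangle \leq 0.$$
Then I would expand
$$\|x_k - u\|^2 = \|(x_k - x_{k+1}) + (x_{k+1} - u)\|^2 = \|x_k - x_{k+1}\|^2 + 2\langle x_k - x_{k+1}, x_{k+1} - u\rangle + \|x_{k+1} - u\|^2,$$
and use the inequality above (rewritten as $\langle x_k - x_{k+1}, x_{k+1} - u\rangle \geq 0$) to drop the cross term, yielding \eqref{fej1}. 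The Fejér monotonicity \eqref{xkp1xk} is then an immediate consequence.

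There is no real obstacle here; the only subtlety worth being explicit about is the inclusion $U \subseteq \Omega_k$, which is precisely the reason why the half-spaces $S_{ik}$ are defined through the projection-supporting inequality rather than an arbitrary separating hyperplane — it ensures that projecting onto $\Omega_k$ cannot push us away from the true feasible set.
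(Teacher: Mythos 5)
Your proof is correct and follows essentially the same route as the paper: the containment $U \subseteq \Omega_k$ via the projection characterization applied to each $U_i$, then the same characterization applied to $P_{\Omega_k}$ to kill the cross term in the expansion of $\|x_k-u\|^2$. You are merely more explicit than the paper about why $U_i \subseteq S_{ik}$, which the paper takes for granted from the definition of $S_{ik}$ as a supporting half-space.
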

\begin{proof}
By Proposition \ref{propproj}, we have 
\begin{equation}\label{projxkp1}
\langle x_k-x_{k+1},u-x_{k+1}\rangle \leq 0,\;\;\forall \, u \in \Omega_k.
\end{equation}
We also have 
\begin{equation}\label{propst}
u  \in U \Rightarrow u \in U_i, \forall \, i \Rightarrow  
u \in \Omega_k,
\end{equation}
which combined with \eqref{projxkp1}
gives for all $u \in U$,
\begin{equation}
\begin{array}{lcl}
\|x_{k}-u\|^2 & = & \|x_{k}-x_{k+1}\|^2 + \|x_{k+1}-u\|^2 +
2 \langle x_{k}-x_{k+1}, x_{k+1}-u \rangle \\
& \stackrel{\eqref{projxkp1}}{\geq }  &
\|x_{k}-x_{k+1}\|^2 + \|x_{k+1}-u\|^2
\end{array}
\end{equation}
and therefore \eqref{fej1}, \eqref{xkp1xk} follow.
\end{proof}

Fejér monotonicity guarantees that the iterates of 3PM never move further from the feasible region. This property, shared by many classical projection schemes, is particularly useful here as it ensures that even approximate polyhedral steps do not destabilize the sequence.

We now derive a consequence of this property.

\begin{proposition}\label{fejerconv} \emph{(Distance bound from Fejér monotonicity)\textbf{.}}
    Let $(z_k)$ be a sequence converging to a point $\bar{z}.$ Suppose that $(z_k)$ is Fejér monotone with respect to a closed, convex set $X.$ Then, for any $k$, it holds that 
    \begin{equation}\label{limdistbound}
        \displaystyle \emph{dist}(z_k,X) \geq \frac{1}{2}\|z_k - \bar{z}\|.
    \end{equation}
\end{proposition}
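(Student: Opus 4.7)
The plan is to exploit Fejér monotonicity in the limit to bound $\|\bar{z} - P_X(z_k)\|$ by $\mathrm{dist}(z_k,X)$, and then finish with a triangle inequality.

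First, I would fix $k$ and apply Fejér monotonicity with $x = P_X(z_k) \in X$: for every $j \geq k$,
\[
\|z_j - P_X(z_k)\| \leq \|z_k - P_X(z_k)\| = \mathrm{dist}(z_k,X).
\]
Since $z_j \to \bar{z}$, passing to the limit in $j$ gives
\[
\|\bar{z} - P_X(z_k)\| \leq \mathrm{dist}(z_k,X).
\]

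Next, I would apply the triangle inequality:
\[
\|z_k - \bar{z}\| \leq \|z_k - P_X(z_k)\| + \|P_X(z_k) - \bar{z}\| \leq \mathrm{dist}(z_k,X) + \mathrm{dist}(z_k,X) = 2\,\mathrm{dist}(z_k,X),
\]
which rearranges to \eqref{limdistbound}.

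There isn't really a hard step here; the only subtlety is recognizing that the right test point to feed into the Fejér monotonicity inequality is $P_X(z_k)$ itself, and that one must let $j \to \infty$ (not $k$) to obtain a bound involving $\bar{z}$ while keeping $\mathrm{dist}(z_k,X)$ on the right-hand side. Continuity of the norm justifies the limit and closedness of $X$ ensures $P_X(z_k)$ is well defined.
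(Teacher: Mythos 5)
Your argument is correct and is essentially identical to the paper's own proof: both apply Fejér monotonicity with the test point $P_X(z_k)$, pass to the limit along the sequence to obtain $\|\bar{z}-P_X(z_k)\|\leq \mathrm{dist}(z_k,X)$, and conclude with the triangle inequality. No issues.
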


\begin{proof}
    By Fejér monotonicity, for any $x \in X$ and $k$,
\begin{equation*}
\|z_{\ell}-x\| \leq \|z_{k}-x\|\quad \forall \, \ell \ge k.
\end{equation*}
Taking the limit as \(\ell \to +\infty\), we get
\begin{equation*}
\|\bar{z}-x\| \leq \|z_{k}-x\|.
\end{equation*}
Substituting $x = P_{X}(z_k)$ above, we conclude that
\begin{equation*}
\|\bar{z}-P_X(z_k)\| \leq \text{dist}(z_k,X).
\end{equation*}
Thus
\begin{equation*}
    \begin{array}{lcl}
    \|z_k - \bar{z}\| &\leq& \|z_k - P_Z(z_k)\| + \|P_Z(z_k) - \bar{z}\|
    \\ &\leq& 2 \, \text{dist}(z_k,X),
    \end{array}
\end{equation*}
yielding \eqref{limdistbound}.
\end{proof}

The next lemma shows that the distance from each iterate to every individual set \( U_i \) vanishes as \( k \to \infty \).

\begin{lemma}\label{lemmadist} \emph{(Vanishing distance to individual sets)\textbf{.}} For any starting point $x_1 \in \mathbb{R}^n$, the sequence $(x_k)$ generated by 3PM satisfies
\begin{equation}\label{first}
\displaystyle \lim_{k \rightarrow +\infty} 
 \max_{i=1,\ldots,m} \emph{dist}(x_k,U_i) =0.
\end{equation}
\end{lemma}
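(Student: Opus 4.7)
The plan is to show that the vanishing distance to each $U_i$ is controlled by the step length $\|x_k - x_{k+1}\|$, and then use Fejér monotonicity to force the step length to zero.

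First I would fix any $u \in U$ and sum the Fejér inequality \eqref{fej1} from Lemma \ref{lemmfejer} over $k = 1, \ldots, N$, producing a telescoping sum
\[
\sum_{k=1}^{N} \|x_k - x_{k+1}\|^2 \;\leq\; \|x_1 - u\|^2 - \|x_{N+1} - u\|^2 \;\leq\; \|x_1 - u\|^2.
\]
Letting $N \to \infty$ gives $\sum_{k \geq 1} \|x_k - x_{k+1}\|^2 < \infty$, so in particular $\|x_k - x_{k+1}\| \to 0$.

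Next I would relate the step length to the individual distances. The key geometric observation is that $P_{U_i}(x_k)$ lies on the bounding hyperplane of $S_{ik}$ (it is the defining point of the half-space) and the outward normal to $S_{ik}$ is exactly $x_k - P_{U_i}(x_k)$. Therefore the orthogonal projection of $x_k$ onto $S_{ik}$ coincides with $P_{U_i}(x_k)$, giving
\[
\operatorname{dist}(x_k, S_{ik}) \;=\; \|x_k - P_{U_i}(x_k)\| \;=\; \operatorname{dist}(x_k, U_i).
\]
Because $U_i \subseteq S_{ik}$ implies $\Omega_k \subseteq S_{ik}$, we have $\operatorname{dist}(x_k, \Omega_k) \geq \operatorname{dist}(x_k, S_{ik})$. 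Combining these with the definition $x_{k+1} = P_{\Omega_k}(x_k)$ yields, for every $i$,
\[
\|x_k - x_{k+1}\| \;=\; \operatorname{dist}(x_k, \Omega_k) \;\geq\; \operatorname{dist}(x_k, U_i).
\]

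Taking the maximum over $i$ and letting $k \to \infty$, the first step of the plan gives $\|x_k - x_{k+1}\| \to 0$, so $\max_i \operatorname{dist}(x_k, U_i) \to 0$, which is \eqref{first}. The only nontrivial point is the identification $\operatorname{dist}(x_k, S_{ik}) = \operatorname{dist}(x_k, U_i)$, but this is a direct computation since $S_{ik}$ is a half-space whose boundary passes through $P_{U_i}(x_k)$ orthogonally to $x_k - P_{U_i}(x_k)$, and can simply be read off from the definition \eqref{sik}. The rest is just chaining inclusions $U \subseteq \Omega_k \subseteq S_{ik}$ together with the Fejér estimate already established.
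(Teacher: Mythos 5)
Your proposal is correct and follows essentially the same route as the paper: the key inequality $\|x_k - x_{k+1}\| = \operatorname{dist}(x_k,\Omega_k) \geq \operatorname{dist}(x_k,S_{ik}) = \operatorname{dist}(x_k,U_i)$ combined with a telescoping of the Fej\'er estimate \eqref{fej1}. The only cosmetic difference is the order of operations — you telescope $\sum_k \|x_k - x_{k+1}\|^2$ first and then apply the inequality, while the paper substitutes the inequality into \eqref{fej1} and telescopes $\sum_k \max_i \operatorname{dist}(x_k,U_i)^2$ directly — which changes nothing of substance.
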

\begin{proof} The inclusion
$\Omega_k \subset S_{ik}$ for all $i$ implies 
$$
\|x_k-x_{k+1}\|=\text{dist}(x_k, \Omega_k) \ge \text{dist}(x_k, S_{ik}) = \|x_k - P_{U_i}(x_k)\| = \text{dist}(x_k,U_i).
$$
for all $i$ and therefore 
$$
\|x_k-x_{k+1}\| \ge \displaystyle \max_{i=1,\ldots,m} \text{dist}(x_k,U_i),
$$
which implies 
\begin{equation}\label{conveq}
    \begin{array}{lcl}
    \|x_{k}-u\|^2 & \stackrel{\eqref{fej1}}{\geq} &
    \|x_{k}-x_{k+1}\|^2 + \|x_{k+1}-u\|^2 \\   
    & \geq & \displaystyle  \max_{i=1,\ldots,m} \left \{ \text{dist}(x_k,U_i)^2 \right\} + \|x_{k+1}-u\|^2.
\end{array}
\end{equation}
Therefore, the series $S_n=\sum_{k=1}^n s_k$ with general term
$s_k=\displaystyle \max_{i=1,\ldots,m} \left \{ \text{dist}(x_k,U_i)^2 \right\}$
is nondecreasing, bounded by $\|x_1-u\|^2$ (for any $u \in U$)
and therefore converges, implying that $s_k$ converges
to 0.
\end{proof}

Having shown that the iterates become increasingly feasible, we now address the limiting behavior of the 3PM sequence \((x_k)\).

\begin{lemma}\label{bdedappproj} \emph{(Feasibility of accumulation points)\textbf{.}} For any starting point $x_1 \in \mathbb{R}^n$, the sequence $(x_k)$ generated by 3PM is bounded and any accumulation
point of this sequence belongs to  $U$.
\end{lemma}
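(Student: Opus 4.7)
The plan is to decompose the statement into two straightforward consequences of the lemmas already established. Boundedness follows directly from Fejér monotonicity (Lemma \ref{lemmfejer}), and feasibility of accumulation points follows from the vanishing distance to individual sets (Lemma \ref{lemmadist}) together with closedness of the $U_i$ and continuity of the distance function.

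For boundedness, I would fix any $u \in U$ (recall $U$ is assumed nonempty) and invoke \eqref{xkp1xk} to conclude $\|x_k - u\| \leq \|x_1 - u\|$ for every $k \geq 1$. This immediately confines the sequence to the closed ball $\overline{B}(u, \|x_1 - u\|)$, hence $(x_k)$ is bounded.

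For the accumulation point statement, I would let $\bar{x}$ be any accumulation point of $(x_k)$ and select a subsequence $(x_{k_j})$ with $x_{k_j} \to \bar{x}$. For each fixed $i \in \{1,\dots,m\}$, Lemma \ref{lemmadist} gives $\mathrm{dist}(x_{k_j}, U_i) \to 0$, and since $y \mapsto \mathrm{dist}(y, U_i)$ is continuous (in fact $1$-Lipschitz), passing to the limit yields $\mathrm{dist}(\bar{x}, U_i) = 0$. Because $U_i$ is closed, this forces $\bar{x} \in U_i$. Since this holds for every $i$, we conclude $\bar{x} \in \bigcap_{i=1}^m U_i = U$.

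I do not anticipate any real obstacle here, as both parts are essentially direct corollaries of results already proved in the excerpt. The only minor point to be careful about is explicitly invoking continuity of the distance function (or equivalently, passing $x_{k_j} \to \bar{x}$ through the inequality $\mathrm{dist}(\bar{x}, U_i) \leq \|\bar{x} - x_{k_j}\| + \mathrm{dist}(x_{k_j}, U_i)$) so that the subsequential argument is rigorous.
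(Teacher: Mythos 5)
Your proposal is correct and follows essentially the same route as the paper: boundedness is obtained from the Fejér monotonicity bound $\|x_k-u\|\le\|x_1-u\|$ for a fixed $u\in U$, and feasibility of accumulation points follows by applying Lemma \ref{lemmadist} along a convergent subsequence together with continuity of the distance function and closedness of the $U_i$. No gaps.
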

\begin{proof}
Since $(x_k)$ is Fejér monotone with respect to $U$,
we have that $x_k$ is bounded:
$$
\|x_k\| \leq \|u\| + \|x_1-u\|
$$
for all $u \in U$. Therefore, it has a convergent subsequence.
Let $\bar x$ be an accumulation point of $(x_k)$ and let
$\varphi$ such that $x_{\varphi(k)}$ converges to $\bar x$.
Then by Lemma \ref{lemmadist},  
$$
\displaystyle \lim_{k \rightarrow +\infty} 
 \max_{i=1,\ldots,m} \mbox{dist}(x_{\varphi(k)},U_i) =0
$$
and by continuity of the distance function, we obtain
$$
\max_{i=1,\ldots,m} \mbox{dist}(\bar x,U_i) =0
$$
meaning that $\bar x \in U$.
\end{proof}

The following theorem completes the global convergence analysis of 3PM. Building on the Fejér monotonicity, asymptotic feasibility, and boundedness of the sequence, we now establish that the iterates \( (x_k) \) converge to a point in the intersection \( U \).

\begin{theorem}\label{conv3PM} \emph{(Global convergence of 3PM)\textbf{.}} For any starting point $x_1 \in \mathbb{R}^n$, the sequence $(x_k)$ generated by 3PM converges to a point 
$\bar x \in U$.
\end{theorem}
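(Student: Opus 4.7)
The plan is to combine the three preceding lemmas: boundedness of $(x_k)$, the fact that every accumulation point lies in $U$ (Lemma \ref{bdedappproj}), and Fejér monotonicity with respect to $U$ (Lemma \ref{lemmfejer}). The overall strategy is the standard ``Fejér monotone sequence with cluster point in the target set converges'' argument. The key observation is that Fejér monotonicity forces the distance to any fixed point of $U$ to be nonincreasing in $k$; once we have a subsequence converging to some $\bar x \in U$, that distance must then tend to zero along the whole sequence.

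First I would invoke Lemma \ref{bdedappproj} to extract a subsequence $x_{\varphi(k)}$ converging to some point $\bar x$, together with the conclusion $\bar x \in U$. Next, since $\bar x \in U$, I would apply Lemma \ref{lemmfejer} with the specific choice $u = \bar x$, obtaining
\[
\|x_{k+1} - \bar x\| \le \|x_k - \bar x\| \qquad \text{for all } k \ge 1,
\]
so that the real sequence $a_k := \|x_k - \bar x\|$ is monotone nonincreasing and bounded below by $0$. In particular, $a_k$ converges to some limit $a^\star \ge 0$. Finally, I would use the subsequential information: since $x_{\varphi(k)} \to \bar x$, we have $a_{\varphi(k)} \to 0$, and since the full sequence $(a_k)$ has a limit, this limit must equal $0$. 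Therefore $\|x_k - \bar x\| \to 0$, i.e., $x_k \to \bar x \in U$, as claimed.

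There is essentially no significant obstacle at this stage, because all the analytic work has already been carried out in Lemmas \ref{lemmfejer}--\ref{bdedappproj}. The only subtle point worth being explicit about is why the existence of a single cluster point in $U$ (rather than multiple, which Fejér monotonicity alone does not preclude a priori) is enough: it is precisely the monotonicity of $k \mapsto \|x_k - \bar x\|$, requiring a cluster point specifically in $U$, that promotes subsequential convergence to full-sequence convergence and simultaneously rules out any other cluster point.
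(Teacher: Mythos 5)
Your proposal is correct and follows essentially the same route as the paper: boundedness and Lemma \ref{bdedappproj} give a cluster point $\bar x \in U$, and Fej\'er monotonicity with $u=\bar x$ upgrades subsequential convergence to convergence of the whole sequence. The paper phrases the last step as showing any two accumulation points coincide, whereas you argue directly that the monotone sequence $\|x_k-\bar x\|$ must have limit $0$; this is the same argument, and your version is if anything slightly cleaner.
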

\begin{proof} Since $(x_k)$ is bounded it has accumulation points.
We show that there is only one accumulation point.
Consider two accumulation points $\bar x$ and $\bar y$, which by the previous lemma belong to $U$,
and take subsequences $x_{\varphi(k)}$ and $x_{\psi(k)}$
such that $\bar x=\displaystyle \lim_{k \rightarrow +\infty} x_{\varphi(k)}$ and $\bar y=\displaystyle \lim_{k \rightarrow +\infty} x_{\psi(k)}$. Then fixing $\ell$, for every $k$ sufficiently large, we have by \eqref{xkp1xk} that
$$
\|\bar x - x_{\psi(k)}\| \leq \|\bar x-x_{\varphi(\ell)}\|.
$$
Taking the limit in the previous inequality, first with
respect to $\ell \rightarrow +\infty$ and then with respect
to $k \rightarrow +\infty$ we obtain
$\|\bar x - \bar y\| \leq 0$ and therefore $\bar x=\bar y$, which achieves the proof.
\end{proof}

\subsubsection{Linear convergence of 3PM}

This section demonstrates the linear convergence of the 3PM algorithm. To that end, we introduce a standard regularity assumption on the collection of constraint sets, known as the error bound (EB) condition. This condition plays a central role in the convergence theory of projection algorithms, linking the individual distances to the sets \( U_i \) with the distance to their intersection.

The error bound condition is also referred to in the literature as linear regularity \cite{BauschkeBorwein1996, BauschkeBorwein1993} or subtransversality \cite{Kruger2018}, and it has been widely employed in the analysis of first-order methods for convex feasibility and related problems.

\begin{definition}\label{defEB} \emph{(Error bound condition)\textbf{.}}
Let \( \{U_i\}_{i=1}^m \) be closed convex sets in \( \mathbb{R}^n \) with nonempty intersection \( U := \bigcap_{i=1}^m U_i \neq \emptyset \). We say that the family \( \{U_i\} \) satisfies a local error bound condition at \( \bar{x} \in U \) if there exist a constant \( \omega \in (0,1) \) and a neighborhood \( V \) of \( \bar{x} \) such that
\begin{equation} \label{EB}\tag{EB}
\omega\, \emph{dist}(x, U) \leq \max_{1 \leq i \leq m} \emph{dist}(x, U_i), \quad \forall \, x \in V.
\end{equation}
\end{definition}

The error bound condition ensures that no point in a neighborhood of \( \bar{x} \) can be simultaneously close to all the sets \( U_i \) while being significantly far from their intersection. The constant \( \omega \) captures the degree of regularity in the configuration of the sets: when the sets meet transversally, this constant is uniformly bounded away from zero. In a geometric sense, \( \omega \) can be thought of as reflecting the “angle” between the sets at the point of intersection, with sharper angles corresponding to smaller values of \( \omega \).

This assumption will serve as a foundation for our convergence rate analysis. In particular, we will show that the EB condition implies linear convergence of the iterates generated by 3PM, and we derive an explicit upper bound on the corresponding asymptotic rate. Furthermore, under additional smoothness and structural assumptions, this analysis is extended to establish superlinear convergence.

We are now in a position to show that the error bound condition implies a linear convergence rate for the 3PM algorithm. The result below shows that, under assumption \ref{EB}, the sequence \( (x_k) \) generated by 3PM converges to a solution at a linear rate, and provides an explicit bound on the asymptotic contraction factor.

\begin{theorem}\label{linconv3PM} \emph{(Linear convergence of 3PM)\textbf{.}} Let $(x_k)$ be the sequence generated by 3PM for a starting point $x_1 \in \mathbb{R}^n$. Assume that \ref{EB} holds for $ \{U_i\}_{i=1}^m $ around the limit point $\bar x$ of $(x_k)$. Then $(x_k)$ converges linearly to $\bar x$.
\end{theorem}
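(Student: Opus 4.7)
The plan is to convert the error bound into a linear contraction of the distance to the feasible set $U$, and then transfer this to linear convergence of the iterates toward $\bar x$ using Proposition \ref{fejerconv}.

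First, since by Theorem \ref{conv3PM} we have $x_k \to \bar x$, there exists an index $k_0$ such that $x_k$ lies in the EB neighborhood $V$ of $\bar x$ for every $k \ge k_0$. For such $k$, the error bound gives
\[
\omega \, \mathrm{dist}(x_k, U) \leq \max_{1\le i \le m}\mathrm{dist}(x_k, U_i).
\]
The inclusion $\Omega_k \subset S_{ik}$, already exploited in the proof of Lemma \ref{lemmadist}, yields
$\|x_k-x_{k+1}\|=\mathrm{dist}(x_k,\Omega_k)\ge \max_i \mathrm{dist}(x_k,U_i)$, and thus
\[
\omega\,\mathrm{dist}(x_k,U) \leq \|x_k-x_{k+1}\|.
\]

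Next I would plug $u = P_U(x_k)\in U$ into the Fejér-type inequality \eqref{fej1}. Since $\|x_{k+1}-P_U(x_k)\|\ge \mathrm{dist}(x_{k+1},U)$, this gives
\[
\mathrm{dist}(x_k,U)^2 \;=\; \|x_k-P_U(x_k)\|^2 \;\ge\; \|x_k-x_{k+1}\|^2 + \mathrm{dist}(x_{k+1},U)^2,
\]
so combining with the previous inequality,
\[
\mathrm{dist}(x_{k+1},U)^2 \leq \mathrm{dist}(x_k,U)^2 - \omega^2 \mathrm{dist}(x_k,U)^2 = (1-\omega^2)\,\mathrm{dist}(x_k,U)^2
\]
for all $k\ge k_0$. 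Iterating, $\mathrm{dist}(x_k,U)$ decays at the linear rate $\sqrt{1-\omega^2}\in(0,1)$.

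To translate this into linear convergence of $(x_k)$ to $\bar x$, I would invoke Proposition \ref{fejerconv}, which (since $(x_k)$ is Fejér monotone with respect to $U$ by Lemma \ref{lemmfejer} and converges to $\bar x\in U$) provides
\[
\|x_k-\bar x\| \;\le\; 2\,\mathrm{dist}(x_k,U).
\]
Chaining this with the contraction of $\mathrm{dist}(x_k,U)$ yields
\[
\|x_k-\bar x\| \leq 2\,(1-\omega^2)^{(k-k_0)/2}\,\mathrm{dist}(x_{k_0},U),
\]
which is the desired (R-)linear convergence of $(x_k)$ to $\bar x$.

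The only mildly delicate point is ensuring the EB hypothesis is actually available along the tail of the sequence, but this is immediate from $x_k\to\bar x$ and the openness of the neighborhood $V$. The rest of the argument is just bookkeeping around \eqref{fej1}, the elementary estimate $\|x_k-x_{k+1}\|\ge \max_i\mathrm{dist}(x_k,U_i)$ already established in Lemma \ref{lemmadist}, and Proposition \ref{fejerconv}; no additional regularity or structural assumption is needed beyond \ref{EB}.
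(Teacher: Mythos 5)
Your proposal is correct, and it uses exactly the same ingredients as the paper's proof (the error bound, the inclusion $\Omega_k \subset S_{ik}$ giving $\|x_k-x_{k+1}\| \ge \max_i \mathrm{dist}(x_k,U_i)$, the Fej\'er inequality \eqref{fej1}, and Proposition \ref{fejerconv}), but it chains them in a different order, and this changes what is actually proved. The paper substitutes $u=\bar x$ into \eqref{conveq} and applies Proposition \ref{fejerconv} \emph{before} contracting, obtaining the Q-linear estimate $\|x_{k+1}-\bar x\| \le \sqrt{1-\omega^2/4}\,\|x_k-\bar x\|$ directly on the iterates. You instead substitute $u=P_U(x_k)$, which yields the Q-linear contraction $\mathrm{dist}(x_{k+1},U) \le \sqrt{1-\omega^2}\,\mathrm{dist}(x_k,U)$ on the distance to the feasible set — a sharper rate constant — and only afterwards transfer to the iterates via $\|x_k-\bar x\| \le 2\,\mathrm{dist}(x_k,U)$, which gives R-linear (not Q-linear) convergence of $(x_k)$ to $\bar x$. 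Both conclusions legitimately qualify as "linear convergence," but the paper's version is the stronger statement about the iterate sequence itself, while yours isolates the cleaner geometric fact that the residual $\mathrm{dist}(x_k,U)$ contracts at rate $\sqrt{1-\omega^2}$; it would be worth stating explicitly in your write-up that the final bound is R-linear.
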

\begin{proof} By \ref{EB}, for some $\omega \in (0,1)$, we have
\begin{equation}
    \displaystyle \omega \, \text{dist}(x_k,U) \leq \max_{i=1,\ldots,m} \text{dist}(x_k,U_i)
\end{equation}
for every sufficiently large $k$. By Proposition \ref{fejerconv}, we have
\begin{equation}\label{rlindist}
    \displaystyle \text{dist}(x_k,U) \geq \frac{1}{2}\|x_k - \bar{x}\|
\end{equation}
for any $k$. Now from \eqref{conveq} with $u = \bar{x}$, 
\begin{equation}\label{linconveq1}
    \begin{array}{lcl} 
    \|x_{k}-\bar{x}\|^2 & \geq & \displaystyle \max_{i=1,\ldots,m} \left \{ \text{dist}(x_k,U_i)^2 \right\} + \|x_{k+1}-\bar{x}\|^2
    \\ & \geq &  \omega^2 \, \text{dist}(x_k,U)^2 + \|x_{k+1}-\bar{x}\|^2
    \\ & \geq & \displaystyle \frac{\omega^2}{4}\|x_k - \bar{x}\|^2 + \|x_{k+1}-\bar{x}\|^2.
\end{array}
\end{equation}
Rearranging the terms, we arrive at
\begin{equation}\label{linconveq2}
\displaystyle \sqrt{1-\frac{\omega^2}{4}} \,\|x_{k}-\bar{x}\| \geq \|x_{k+1}-\bar{x}\|,
\end{equation}
proving the linear convergence of 3PM.
\end{proof}

The result shows that 3PM not only converges but does so with a guaranteed rate when the constraint sets satisfy a mild regularity condition. In particular, the rate constant depends explicitly on the error bound constant \( \omega \). This analysis also lays the groundwork for understanding regimes where the convergence may be faster than linear, as we explore next.

\subsubsection{Superlinear convergence of 3PM}

In this section, we establish the superlinear convergence of the 3PM sequence \( (x_k) \) under the assumptions stated in Theorem \ref{thsplc}. In the context of the 3PM algorithm, we denote the projection of \( x_k \) onto \( U_i \) by \( p_{ik} := P_{U_i}(x_k) \), and define the residual vector \( d_{ik} := x_k - p_{ik} \).

The assumptions for superlinear convergence include the nonemptiness of the interior of the intersection \( U := \bigcap_{i=1}^m U_i \), and the requirement that each boundary \( \partial U_i \) is locally a differentiable manifold. These conditions imply that each \( \partial U_i \) is a \( (n - 1) \)-dimensional manifold.

When the boundary of each set \( U_i \) is locally smooth, the supporting half-spaces constructed in 3PM can be interpreted as local first-order linearizations of the sets. In this regime, the polyhedron \( \Omega_k \) formed by these half-spaces approximates the intersection \( U = \bigcap_i U_i \) increasingly well as \( x_k \) converges to a point in \(U\). 

In particular, when the iterates \( x_k \) eventually avoid lying too close to the boundary of certain “inactive” sets (e.g., those not containing \( \bar{x} \) on their boundary), the supporting half-spaces corresponding to the remaining sets tend to align in a configuration that more accurately captures the local geometry of the feasible region. This selective stabilization has the effect of making the correction step in 3PM increasingly well-directed, exhibiting behavior reminiscent of Newton-like superlinear acceleration.

We begin by presenting Lemmas \ref{lemdistfunc} and \ref{lemmanif}, which will be instrumental in the proof of superlinear convergence of 3PM.

\begin{lemma}\label{lemdistfunc} \emph{(Monotonicity of projection-distance ratio)\textbf{.}} Let \( X \subset \mathbb{R}^n \) be a nonempty closed convex set, let \(x \not \in X\) and \( p:=P_X(x) \). Let any \( q \in X \). Then the function
\[
\phi(\mu) := \frac{\emph{dist}(p + \mu(x - p), \, X)}{\|p + \mu(x - p) - q\|}
\]
is increasing in \( \mu > 0 \).
\end{lemma}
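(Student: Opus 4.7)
The plan is to exploit the projection characterization twice, once to simplify the numerator of $\phi$ and once to control the sign of a key inner product.

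First I would observe that for every $\mu \geq 0$, the point $y_\mu := p+\mu(x-p)$ has $p$ as its projection onto $X$. Indeed, Proposition \ref{propproj} applied to $x$ yields $\langle x-p,\,z-p\rangle\leq 0$ for all $z\in X$; multiplying by $\mu\geq 0$ gives $\langle y_\mu-p,\,z-p\rangle\leq 0$ for all $z\in X$, which, by the sufficiency half of the projection characterization, shows that $p = P_X(y_\mu)$. Consequently,
\[
\mathrm{dist}(y_\mu,X)=\|y_\mu-p\|=\mu\|x-p\|,
\]
so
\[
\phi(\mu)^{2}=\frac{\mu^{2}\|x-p\|^{2}}{\|p+\mu(x-p)-q\|^{2}}.
\]

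Next, I would show that $\phi^{2}$ is increasing by showing that the reciprocal expression
\[
\psi(\mu):=\frac{\|p+\mu(x-p)-q\|^{2}}{\mu^{2}}=\frac{\|p-q\|^{2}}{\mu^{2}}+\frac{2\langle p-q,\,x-p\rangle}{\mu}+\|x-p\|^{2}
\]
is decreasing on $\mu>0$. Differentiating,
\[
\psi'(\mu)=-\frac{2}{\mu^{3}}\Bigl(\|p-q\|^{2}+\mu\,\langle p-q,\,x-p\rangle\Bigr).
\]
Here is where the second application of the projection inequality comes in: since $q\in X$, Proposition \ref{propproj} gives $\langle x-p,\,q-p\rangle\leq 0$, i.e.\ $\langle p-q,\,x-p\rangle\geq 0$. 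Therefore the parenthesized quantity is nonnegative, giving $\psi'(\mu)\leq 0$ for all $\mu>0$. Hence $\psi$ is nonincreasing, $\phi^{2}=\|x-p\|^{2}/\psi$ is nondecreasing, and since $\phi\geq 0$, $\phi$ itself is nondecreasing.

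For strict monotonicity (should the statement require it), I would note that $\psi'(\mu)=0$ on an interval would force both $\|p-q\|=0$ and $\langle p-q,x-p\rangle=0$, giving $q=p$ and hence $\|y_\mu-q\|=\mu\|x-p\|$, which makes $\phi\equiv 1$ constant; otherwise $\phi$ is strictly increasing. The main obstacle is really just a bookkeeping one: recognizing that the projection characterization plays two distinct roles — first to collapse $\mathrm{dist}(y_\mu,X)$ into a linear function of $\mu$, and second to pin down the sign that makes the derivative computation go through — after which the result reduces to elementary calculus.
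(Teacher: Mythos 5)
Your proof is correct and follows essentially the same route as the paper's: both reduce the numerator to $\mu\|x-p\|$ via the observation that $P_X(p+\mu(x-p))=p$, and both conclude monotonicity from the sign condition $\langle x-p,\,p-q\rangle\ge 0$ supplied by the projection characterization (the paper phrases the last step as the denominator being increasing in $1/\mu$ rather than differentiating, but this is the same computation). Your explicit justification that $p$ is the projection of every point on the ray, and your remark on the degenerate case $q=p$, are slightly more careful than the paper's treatment.
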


\begin{proof} Since $P_X(p + \mu(x - p)) = p,$ we have
\[
\phi(\mu) = \frac{\mu \|x - p\|}{\|p - q + \mu(x - p)\|}.
\]
Define \( a := x - p \) and \( b := p - q \). Then
\[
\phi(\mu)^2 = \frac{\mu^2 \|a\|^2}{\|b + \mu a\|^2} = \frac{\mu^2 \|a\|^2}{\|b\|^2 + 2\mu \langle a, b \rangle + \mu^2 \|a\|^2} = \frac{\|a\|^2 }{\|b\|^2 \frac{1}{\mu^2} + 2 \langle a, b \rangle \frac{1}{\mu} + \|a\|^2}.
\]
Since \(\langle a, b \rangle \ge 0\) by Proposition \ref{propproj}, the denominator is increasing in $\frac{1}{\mu},$ then decreasing in $\mu$, hence $\phi$ is increasing in $\mu.$
\end{proof}

As mentioned earlier, the nonemptiness of the interior of \( U \), along with the assumption that the boundaries of \( U_i \) are locally differentiable manifolds, implies that these manifolds have dimension \( n-1 \).

Let \( M \subset \mathbb{R}^n \) be any differentiable manifold of dimension \( n-1 \) and \( \bar{z} \in M \). It is well-known that \( M \) can be locally written as
\[
M = \{ z \in \mathbb{R}^n \mid g(z) = 0 \}
\]
for some function \( g: \mathbb{R}^n \to \mathbb{R} \) of class \( C^1 \) with \( g(\bar{z}) = 0 \) and \( \nabla g(\bar{z}) \neq 0 \), so that the tangent hyperplane is given by
\[
T_M(\bar{z}) := \{ z \in \mathbb{R}^n \mid \langle \nabla g(\bar{z}), z - \bar{z} \rangle = 0 \}.
\]

The following lemma is a vital geometric tool in our analysis. It captures the behavior of sequences approaching a smooth manifold and will play a key role in showing the superlinear convergence of 3PM.

\begin{lemma} \label{lemmanif} \emph{(Tangency limit to manifold)\textbf{.}}
Let \( M \subset \mathbb{R}^n \) be a differentiable manifold of dimension \( n-1 \) and \( \bar{z} \in M \). Suppose that both the sequences $(q_k) \subset M$ and $(z_k) \subset \mathbb{R}^n$ converge to $\bar z$ with $z_k \in T_M(q_k)$ and $z_k \neq q_k$. Then
\[
\lim_{k \to \infty} \frac{\operatorname{dist}(z_k, M)}{\| z_k - q_k \|} = 0.
\]
\end{lemma}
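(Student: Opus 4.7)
The plan is to use the local $C^1$ defining function of $M$ near $\bar z$ to reduce the problem to a first-order Taylor estimate combined with a scalar inverse-function argument. As recalled just before the statement, on some neighborhood $V$ of $\bar z$ we may write $M\cap V = \{z \in V : g(z)=0\}$ with $g\in C^1$ and $\eta:=\nabla g(\bar z)\neq 0$. For $k$ large, both $q_k$ and $z_k$ lie in $V$, and the two pieces of hypothesis become $g(q_k)=0$ and $\langle \nabla g(q_k),\,z_k-q_k\rangle=0$.

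The first step is to show that $|g(z_k)|=o(\|z_k-q_k\|)$. Using $g(q_k)=0$ together with the tangency condition, the integral form of the mean value theorem gives
\[
g(z_k) \;=\; g(z_k)-g(q_k)-\langle \nabla g(q_k),z_k-q_k\rangle \;=\; \int_0^1 \bigl\langle \nabla g(q_k+t(z_k-q_k))-\nabla g(q_k),\,z_k-q_k\bigr\rangle\,dt.
\]
By Cauchy–Schwarz,
\[
\frac{|g(z_k)|}{\|z_k-q_k\|}\;\leq\;\sup_{t\in[0,1]}\bigl\|\nabla g(q_k+t(z_k-q_k))-\nabla g(q_k)\bigr\|.
\]
Because $\nabla g$ is continuous at $\bar z$ and the whole segment joining $q_k$ and $z_k$ shrinks into any prescribed neighborhood of $\bar z$, this supremum tends to $0$, which gives the desired $o$-estimate.

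The second step converts the small defect $g(z_k)$ into a small distance to $M$. I would fix the direction $\eta$ and study the scalar function $h_k(s):=g(z_k+s\eta)$, which satisfies $h_k(0)=g(z_k)\to 0$ and $h_k'(0)=\langle \nabla g(z_k),\eta\rangle \to \|\eta\|^2>0$. By the intermediate value theorem together with a lower bound on $h_k'$ on a small interval (available since $\nabla g$ is continuous and $z_k\to\bar z$), there exists $s_k$ with $h_k(s_k)=0$ and $|s_k|\leq C|g(z_k)|$ for some constant $C$ independent of $k$. For $k$ large the point $w_k:=z_k+s_k\eta$ lies in $V$ and therefore in $M$, and
\[
\operatorname{dist}(z_k,M)\;\leq\;\|w_k-z_k\|\;=\;|s_k|\,\|\eta\|\;\leq\;C\|\eta\|\,|g(z_k)|\;=\;o(\|z_k-q_k\|).
\]
Dividing by $\|z_k-q_k\|$ and letting $k\to\infty$ yields the conclusion.

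The main obstacle is the first step: one must upgrade the pointwise Taylor expansion into a genuine $o(\|z_k-q_k\|)$ bound along a \emph{moving} base point $q_k$. This uniformity is precisely what continuity of $\nabla g$ at $\bar z$ buys us, and it is the only place where more than the bare definition of differentiability is used. Once this estimate is in hand, the second step is a routine one-dimensional inverse-function argument, with the only care being to verify that $z_k+s_k\eta$ remains in the chart where $g=0$ describes $M$, which follows automatically from $|s_k|\to 0$.
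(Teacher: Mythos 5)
Your proof is correct and follows essentially the same route as the paper's: the first step (fundamental theorem of calculus plus the tangency condition $\langle \nabla g(q_k), z_k - q_k\rangle = 0$ to obtain $|g(z_k)| = o(\|z_k - q_k\|)$) is identical, and the second step is the same quantitative intermediate-value argument along an approximately normal direction, differing only in that you step along the fixed vector $\nabla g(\bar z)$ with a derivative lower bound, whereas the paper steps along $\nabla g(z_k)$ and exhibits a sign change. No gaps.
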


\begin{proof} Take $g$ as described above. Evidently, we only have to consider the case $g(z_k) \neq 0$. For all $k$ large enough, by the fundamental theorem of calculus we have
\[
g(z_k) - g(q_k) = g(z_k) = \int_0^1 \left\langle z_k - q_k, \nabla g(q_k + t(z_k - q_k)) \right\rangle dt.
\]
By assumption, \( \langle z_k - q_k, \nabla g(q_k) \rangle = 0 \). Hence
\[
g(z_k) = \int_0^1 \left\langle z_k - q_k,\; \nabla g(q_k + t(z_k - q_k)) - \nabla g(q_k) \right\rangle dt.
\]
Thus
\[
|g(z_k)| \le \int_0^1 \left\| z_k - q_k \right\| \cdot \left\| \nabla g(q_k + t(z_k - q_k)) - \nabla g(q_k) \right\| dt.
\]
Since \(\nabla g\) is continuous, we have
\[
\left\| \nabla g(q_k + t(z_k - q_k)) - \nabla g(q_k) \right\| \to 0
\]
uniformly in \( t \in [0,1] \), which implies
\begin{equation} \label{manifeq1}
\lim_{k \to \infty} \frac{|g(z_k)|}{\|z_k - q_k\|} = 0.
\end{equation}
Define
\[
\tilde{z}_k := z_k - g(z_k) \frac{2}{\|\nabla g(\bar{z})\|^2} \nabla g(z_k).
\]
Then clearly \( \tilde{z}_k \to \bar{z} \). Using the fundamental theorem of calculus again, we get
\begin{align*}
    g(z_k) - g(\tilde{z}_k) &= \int_0^1 \left\langle \nabla g\left(\tilde z_k + t (z_k - \tilde z_k) \right), \; g(z_k) \frac{2}{\|\nabla g(\bar{z})\|^2} \nabla g(z_k) \right\rangle dt. \\
&= g(z_k) \frac{2}{\|\nabla g(\bar{z})\|^2} \int_0^1 \left\langle \nabla g\left(z_k + t (z_k - \tilde z_k)\right), \nabla g(z_k) \right\rangle dt.
\end{align*}
The last integrand converges to \( \|\nabla g(\bar{z})\|^2 > 0\), so for all large \( k \) we have
\[
\int_0^1 \left\langle \nabla g\left(z_k + t (z_k - \tilde z_k)\right), \nabla g(z_k) \right\rangle dt \ge \frac{\|\nabla g(\bar{z})\|^2}{2},
\]
which implies
\[
\frac{g(z_k) - g(\tilde{z}_k)}{g(z_k)} \ge 1,
\]
meaning $g(\tilde{z}_k)$ and $g(z_k)$ have different signs. By continuity of \( g \), there exists \( \lambda_k \in [0,1] \) such that
\[
g\left( z_k - \lambda_k g(z_k) \frac{2}{\|\nabla g(\bar{z})\|^2} \nabla g(z_k) \right) = 0, \quad \text{i.e.}, \quad
z_k - \lambda_k g(z_k) \frac{2}{\|\nabla g(\bar{z})\|^2} \nabla g(z_k) \in M.
\]
Thus
\[
\operatorname{dist}(z_k, M) \le \left\|\lambda_k g(z_k) \frac{2}{\|\nabla g(\bar{z})\|^2} \nabla g(z_k) \right\| \le |g(z_k)| \frac{2\|\nabla g(z_k)\|}{\|\nabla g(\bar{z})\|^2}.
\]
Since \( \|\nabla g(z_k)\| \to \|\nabla g(\bar{z})\| > 0 \), we conclude that
\[
\operatorname{dist}(z_k, M) = \mathcal{O}(|g(z_k)|).
\]
Combining this fact with \eqref{manifeq1}, we obtain:
\begin{equation} \label{manifeq2}
\lim_{k \to \infty} \frac{\operatorname{dist}(z_k, M)}{\|z_k - q_k\|} = 0.
\end{equation}
This completes the argument.
\end{proof}

The following lemma captures the asymptotic alignment of projection directions near smooth boundaries.

\begin{lemma}\emph{(Alignment of projection directions)\textbf{.}} \label{lemCosine}
Let $U \subset \mathbb{R}^n$ be closed, convex, with nonempty interior, and suppose $\partial U$ is locally a differentiable manifold. Let $\bar{x} \in \partial U$, and let $x_k \notin U$ with $x_k \to \bar{x}$. Define $p_k := P_U(x_k)$. Then
$$
\lim_{k \to \infty} \cos \angle(x_k-p_k, x_{k+1}-p_{k+1}) = 1.
$$

\end{lemma}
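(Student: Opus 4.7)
The plan is to exploit the smoothness of $\partial U$ at $\bar{x}$: when $U$ is convex with smooth boundary, the first-order characterization of the projection forces $x_k - p_k$ to be parallel to the outward unit normal of $\partial U$ at $p_k$. As $x_k \to \bar{x}$, the projections $p_k$ converge to $\bar{x}$ (by nonexpansivity of $P_U$), so the normal direction at $p_k$ converges to the normal at $\bar{x}$. The same holds for $p_{k+1}$, so both residual vectors align in direction with a common limit, forcing the cosine of their angle to tend to $1$.

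First I would write $\partial U = \{z : g(z) = 0\}$ locally around $\bar{x}$ with $g \in C^1$ and $\nabla g(\bar{x}) \neq 0$. Since $U$ is convex with nonempty interior, the sign of $g$ can be chosen so that $U = \{g \le 0\}$ locally, making $\nabla g(p)$ an outward normal to $U$ at every boundary point $p$ near $\bar{x}$. Applying Proposition \ref{propproj} gives $\langle x_k - p_k, y - p_k \rangle \le 0$ for every $y \in U$, so $x_k - p_k$ lies in the normal cone of $U$ at $p_k$. For $k$ large enough, $p_k$ belongs to the smooth portion of $\partial U$, so this normal cone is the ray generated by $\nabla g(p_k)$. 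Combined with $x_k \notin U$, this yields
\[
\frac{x_k - p_k}{\|x_k - p_k\|} \;=\; \frac{\nabla g(p_k)}{\|\nabla g(p_k)\|}.
\]

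Next, by nonexpansivity of the projection, $\|p_k - \bar{x}\| = \|P_U(x_k) - P_U(\bar{x})\| \le \|x_k - \bar{x}\| \to 0$, and likewise $p_{k+1} \to \bar{x}$. Continuity of $\nabla g$ then shows that both unit vectors $(x_k - p_k)/\|x_k - p_k\|$ and $(x_{k+1} - p_{k+1})/\|x_{k+1} - p_{k+1}\|$ converge to the same limit $\nabla g(\bar{x})/\|\nabla g(\bar{x})\|$. Their inner product, which equals $\cos \angle(x_k - p_k, x_{k+1} - p_{k+1})$, therefore tends to $1$. The only minor subtlety is the setup, namely the choice of sign of $g$ and the fact that the normal cone at smooth boundary points of a convex set is a single ray; once these are established, the proof is essentially a continuity argument.
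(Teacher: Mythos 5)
Your proposal is correct and follows essentially the same route as the paper: both identify $(x_k-p_k)/\|x_k-p_k\|$ with the normalized gradient $\nabla g(p_k)/\|\nabla g(p_k)\|$ of a local $C^1$ defining function for $\partial U$, then invoke $p_k\to\bar{x}$ and continuity of $\nabla g$ to conclude that both unit residual directions converge to the common outward normal at $\bar{x}$. Your explicit justification of $p_k\to\bar{x}$ via nonexpansivity of $P_U$ is a small detail the paper leaves implicit, but the argument is otherwise identical.
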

\begin{proof}
Since $x_k \notin U$, we have $p_k \in \partial U$ and $x_k - p_k \in N_U(p_k)$, so:
$$
\frac{x_k - p_k}{\|x_k - p_k\|} \in N_U(p_k) \cap \mathbb{S}^{n-1}.
$$
By local differentiability, there exists an open ball $B$ 
containing
$\bar{x}$ and a function $g \in C^1(B)$ such that:
$$
U \cap B = \{ x \in B : g(x) \le 0 \}, \quad \partial U \cap B = \{ x \in B : g(x) = 0 \}, \quad \nabla g(x) \ne 0 \text{ on } B.
$$
For all sufficiently large $k$, $p_k \in B$, so:
$$
N_U(p_k) = \mathbb{R}_+ \nabla g(p_k) \quad \text{and} \quad \frac{x_k - p_k}{\|x_k - p_k\|} = \frac{\nabla g(p_k)}{\|\nabla g(p_k)\|}.
$$
Since $\nabla g$ is continuous and $p_k \to \bar{x}$, we conclude:
$$
\frac{\nabla g(p_k)}{\|\nabla g(p_k)\|} \to \frac{\nabla g(\bar{x})}{\|\nabla g(\bar{x})\|} = n(\bar{x}),
$$
where $n(\bar{x})$ is the outward unit normal to $\partial U$ at $\bar{x}$. Thus it becomes evident that
$$\cos \angle(x_k-p_k, x_{k+1}-p_{k+1}) = \left \langle \frac{x_k - p_k}{\|x_k - p_k\|}, \frac{x_{k+1} - p_{k+1}}{\|x_{k+1} - p_{k+1}\|} \right \rangle = \left \langle \frac{\nabla g(p_k)}{\|\nabla g(p_k)\|}, \frac{\nabla g(p_{k+1})}{\|\nabla g(p_{k+1})\|} \right \rangle \to 1.$$
\end{proof}

Finally, we are ready to formalize a result that has been discussed throughout the paper: under a geometric condition, the convergence of 3PM improves beyond linear. Specifically, if the iterates eventually avoid the boundary sets active at the limit point, the polyhedral correction becomes increasingly accurate, yielding superlinear convergence.

\begin{theorem}\label{thsplc} \emph{(Superlinear convergence of 3PM)\textbf{.}} Assume that \(\emph{int} (U) \neq \emptyset\) and each boundary \( \partial U_i \) is locally a differentiable manifold. Let $(x_k)$ be the sequence generated by 3PM for a starting point $x_1 \in \mathbb{R}^n$, which converges to $\bar x$. Define $I = \{i \mid \bar{x} \not \in \emph{int} (U_i)\}$. Assume that $x_k \not \in \bigcup_{i \in I} U_i$ for all $k$ large enough. Then $x_k$ converges to $\bar x$ superlinearly.
\end{theorem}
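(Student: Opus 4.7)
The plan is to establish superlinear convergence via the implication chain $\operatorname{dist}(x_{k+1},U_i)=o(\|x_k-\bar x\|)$ for each $i\in I$ $\Rightarrow$ $\operatorname{dist}(x_{k+1},U)=o(\|x_k-\bar x\|)$ $\Rightarrow$ $\|x_{k+1}-\bar x\|=o(\|x_k-\bar x\|)$. The first implication uses that $\operatorname{int}(U)\neq\emptyset$ forces the error bound condition around $\bar x$ (as noted in the paper just after Theorem \ref{linconv3PM}), while the second invokes Proposition \ref{fejerconv} applied to the Fej\'er monotone sequence $(x_k)$.

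First I would reduce to the active family $I$. For $i\notin I$ one has $\bar x\in\operatorname{int}(U_i)$; since $(x_k)$ converges to $\bar x$ by Theorem \ref{conv3PM}, for $k$ large $x_k\in U_i$, which forces $p_{ik}=x_k$ and $S_{ik}=\mathbb{R}^n$, so $\Omega_k=\bigcap_{i\in I}S_{ik}$. The same argument applied to $x_{k+1}\to\bar x$ gives $\operatorname{dist}(x_{k+1},U_i)=0$ for $i\notin I$. Next, fix $i\in I$: the standing hypothesis guarantees $x_k\notin U_i$, so $p_{ik}\in\partial U_i$ and the local $C^1$ representation $U_i=\{g_i\leq 0\}$, $\partial U_i=\{g_i=0\}$, with $\nabla g_i$ nonvanishing near $\bar x$, applies. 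Because $(x_k-p_{ik})/\|x_k-p_{ik}\|$ is a positive multiple of $\nabla g_i(p_{ik})$, the membership $x_{k+1}\in S_{ik}$ translates into $\langle\nabla g_i(p_{ik}),\,x_{k+1}-p_{ik}\rangle\leq 0$. A $C^1$ Taylor expansion of $g_i$ at $p_{ik}$ then yields
\[
g_i(x_{k+1})=\langle\nabla g_i(p_{ik}),\,x_{k+1}-p_{ik}\rangle+o(\|x_{k+1}-p_{ik}\|)\leq o(\|x_{k+1}-p_{ik}\|).
\]
Combining this with the local estimate $\operatorname{dist}(z,U_i)\leq g_i(z)_+/c_i$ (valid on a neighborhood of $\bar x$ with $c_i=\inf\|\nabla g_i\|>0$), and with $\|x_{k+1}-p_{ik}\|\leq\|x_{k+1}-\bar x\|+\|\bar x-p_{ik}\|\leq 3\|x_k-\bar x\|$ (by Fej\'er monotonicity of $(x_k)$ and nonexpansiveness of $P_{U_i}$ at $\bar x\in U_i$), one obtains $\operatorname{dist}(x_{k+1},U_i)=o(\|x_k-\bar x\|)$.

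Finally, the error bound condition at $\bar x$ yields, for some $\omega\in(0,1)$ and all $k$ large,
\[
\omega\,\operatorname{dist}(x_{k+1},U)\leq\max_{1\leq i\leq m}\operatorname{dist}(x_{k+1},U_i)=\max_{i\in I}\operatorname{dist}(x_{k+1},U_i)=o(\|x_k-\bar x\|),
\]
and Proposition \ref{fejerconv} then gives $\|x_{k+1}-\bar x\|\leq 2\operatorname{dist}(x_{k+1},U)=o(\|x_k-\bar x\|)$, which is precisely superlinear convergence. The main obstacle is the sharp little-$o$ (not merely big-$O$) bound in the key estimate: it hinges critically on the fact that $\partial S_{ik}$ is the \emph{exact} tangent hyperplane to $\partial U_i$ at $p_{ik}$, so that the linear part of the Taylor expansion of $g_i$ is captured by $\langle\nabla g_i(p_{ik}),x_{k+1}-p_{ik}\rangle$, which is nonpositive by $x_{k+1}\in S_{ik}$, leaving only the $C^1$ remainder. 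Removing the assumption $x_k\notin\bigcup_{i\in I}U_i$ would break the argument, since for $i\in I$ with $x_k\in U_i$ one would have $p_{ik}=x_k$ and $S_{ik}=\mathbb{R}^n$, contributing nothing to the local approximation of $U_i$.
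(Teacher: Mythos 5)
Your proof is correct, but it takes a genuinely different and in fact shorter route than the paper's. The paper also reduces to the active index set $I$ and finishes with the same chain (error bound from $\operatorname{int}(U)\neq\emptyset$, then Proposition \ref{fejerconv}); the difference is in how the key estimate $\operatorname{dist}(x_{k+1},U_i)=o(\cdot)$ is obtained. The paper constructs an auxiliary point $y_k=x_{k+1}+\mu_k d_{i,k+1}$ lying exactly on the tangent hyperplane $\partial S_{ik}$, needs Lemma \ref{lemCosine} to show $\mu_k\ge 0$ and $y_k\to\bar x$, applies the tangency Lemma \ref{lemmanif} to $y_k$, and transfers the ratio back to $x_{k+1}$ via the monotonicity Lemma \ref{lemdistfunc}, normalizing by $\operatorname{dist}(x_k,U)\ge\|x_{k+1}-p_{ik}\|$. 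You instead exploit the one-sided information directly: $x_{k+1}\in S_{ik}$ makes the linear term $\langle\nabla g_i(p_{ik}),x_{k+1}-p_{ik}\rangle$ nonpositive, so the $C^1$ expansion leaves only the remainder, and the normalization $\|x_{k+1}-p_{ik}\|\le 2\|x_k-\bar x\|$ comes for free from Fej\'er monotonicity and nonexpansiveness of $P_{U_i}$. This eliminates the $y_k$ construction, Lemma \ref{lemCosine}, Lemma \ref{lemdistfunc}, and the selection of the maximizing index, and it yields the bound for every $i\in I$ at once. Two points that you assert should be spelled out, since they carry the technical weight that Lemma \ref{lemmanif} carries in the paper: (i) the little-$o$ in your Taylor expansion must be uniform as the base point $p_{ik}$ varies along the sequence — this follows from the integral form of the remainder and uniform continuity of $\nabla g_i$ on a compact neighborhood of $\bar x$, exactly as in the first half of the proof of Lemma \ref{lemmanif}; and (ii) the local error bound $\operatorname{dist}(z,U_i)\le C\,g_i(z)_+$ near $\bar x$ is not automatic from $\nabla g_i\neq 0$ without an argument — it is established by a line search along $-\nabla g_i(z)$, which is precisely the second half of the proof of Lemma \ref{lemmanif}. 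With those two ingredients made explicit (or by citing Lemma \ref{lemmanif} in its one-sided form), your argument is a complete and somewhat more economical proof of Theorem \ref{thsplc}.
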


\begin{proof} 
We have $\displaystyle x_k \in \bigcap_{i \not \in I} U_i$ for all $k$ large enough. From this point on, the sequence is defined only by $\{U_i \mid i \in I\}$. Moreover, it is clear that $\bar x \in \bigcap_{i \in I} \partial U_i.$
For each $i \in I$, Lemma \ref{lemCosine} implies
\begin{equation}\label{costo1}
\cos \angle (d_{i k+1},\, d_{i k}) \to 1.
\end{equation}
Choose $i$ satisfying $\displaystyle \|d_{i k+1}\| = \max_{j \in I} \|d_{j, k+1}\|.$ Let
\[
y_k := x_{k+1} + \mu_k d_{i k+1},
\]
where
\[
\mu_k := \frac{\left\langle p_{i k} - x_{k+1},\, d_{i k} \right\rangle}{\left\langle d_{i  k+1},\, d_{i k} \right\rangle}.
\]
Then for all $k$ large enough, \(\mu_k \ge 0\) and
    \[
    \left \langle y_k - p_{i k},\, d_{i k} \right \rangle = \left \langle x_{k+1} - p_{i k} + \mu_k d_{i k+1},\, d_{i k} \right \rangle = -\left \langle p_{i k} - x_{k+1}, d_{i k} \right \rangle + \mu_k \left \langle d_{i k+1}, d_{i k} \right \rangle = 0.
    \]
That means \(y_k \in \partial S_{ik}\).
Notice that 
\begin{align*}
\|y_k - x_{k+1}\| 
&= \mu_k \|d_{i k+1}\| \\
&= \frac{\left\langle p_{i k} - x_{k+1},\, d_{i k} \right\rangle}{\left\langle d_{i k+1},\, d_{i k} \right\rangle} \cdot \|d_{i k+1}\| \\
&= \frac{\|p_{i k} - x_{k+1}\| \cdot \|d_{i k}\| \cdot \cos \angle (p_{i k} - x_{k+1},\, d_{i k})}{\|d_{i k+1}\| \cdot \|d_{i k}\| \cdot \cos \angle (d_{i k+1},\, d_{i k})} \cdot \|d_{i k+1}\| \\
&= \|p_{i k} - x_{k+1}\| \cdot \frac{\cos \angle (p_{i k} - x_{k+1},\, d_{i k})}{\cos \angle (d_{i k+1},\, d_{i k})}.
\end{align*}
Result \eqref{costo1}, together with the convergences \(\|p_{ik}-x_{k+1}\|\to 0\) and \(x_{k+1} \to \bar{x}\), yields \(y_k\to\bar x\). Clearly we also have \(p_{i k} \to \bar{x}\). From the tangency of \(\partial S_{ik}\) to \(U_i\) at \(p_{i k}\) \cite[Theorems 23.2 and 25.1]{rockafellar1997convex} and the fact that \(y_k \in \partial S_{ik}\), we can apply Lemma \ref{lemmanif} to conclude that
\begin{equation}\label{210limit}
    \lim_{k \to \infty} \frac{\operatorname{dist}(y_k, U_i)}{\| y_k - p_{i k}\|} = 0.
\end{equation}
Since $\mu_k \ge 0$, by Lemma \ref{lemdistfunc}, we get
\begin{equation} \label{210lemma28}
\frac{\operatorname{dist}(x_{k+1}, U_i)}{\| x_{k+1} - p_{i k}\|} \le \frac{\operatorname{dist}(y_k, U_i)}{\| y_k - p_{i k}\|}.
\end{equation}
Meanwhile, $U \subset \Omega_k$ forces $\operatorname{dist}(x_k, U) \ge \|x_k - x_{k+1}\|$ and, by $x_{k+1} \in S_{ik}$, Proposition \ref{propproj} means that
$$\|x_k-x_{k+1}\|^2 \ge \| x_{k+1} - p_{i k}\|^2 + \| x_{k} - p_{i k}\|^2,$$
hence $\operatorname{dist}(x_k, U) \ge \| x_{k+1} - p_{i k}\|$. So
\begin{equation} \label{210ineq1}
\frac{\operatorname{dist}(x_{k+1}, U_i)}{\operatorname{dist}(x_k, U)} \le \frac{\operatorname{dist}(x_{k+1}, U_i)}{\| x_{k+1} - p_{i k}\|}.
\end{equation}
Now the error bound condition, implied by \(\text{int} (U) \neq \emptyset\), yields
\begin{equation} \label{210ineq2}
    \frac{\operatorname{dist}(x_{k+1}, U)}{\operatorname{dist}(x_k, U)}
\leq \frac{1}{\omega} \cdot \frac{\max_{j} \left\{\operatorname{dist}(x_{k+1}, U_j)\right\}}{\operatorname{dist}(x_k, U)} =\frac{1}{\omega} \cdot  \frac{\operatorname{dist}(x_{k+1}, U_i)}{\operatorname{dist}(x_k, U)}.
\end{equation}
Using Proposition \ref{fejerconv} and the inequality \(\text{dist}(x_k, U) \le \|x_k-\bar x\|\), we get
\begin{equation}\label{210ineq3}
    \frac{\|x_{k+1}-\bar{x}\|}{\|x_{k}-\bar{x}\|} \le 2 \cdot \frac{\operatorname{dist}(x_{k+1}, U)}{\operatorname{dist}(x_k, U)}.
\end{equation}
Combining \eqref{210lemma28}, \eqref{210ineq1}, \eqref{210ineq2} and \eqref{210ineq3}, it follows that
\[
\frac{\|x_{k+1}-\bar{x}\|}{\|x_{k}-\bar{x}\|} \le \frac{2}{\omega} \cdot \frac{\operatorname{dist}(y_k, U_i)}{\| y_k - p_{i k}\|}
\]
and, finally, from \eqref{210limit}, we arrive at
\[
\lim_{k \to \infty} \frac{\|x_{k+1}-\bar{x}\|}{\|x_{k}-\bar{x}\|} = 0,
\]
demonstrating the desired superlinear convergence.
\end{proof}

\subsection{P-CRM and MAP as special cases of 3PM}

In this section, we explore how 3PM relates to two classical projection methods, highlighting shared geometric principles and algorithmic structures. These connections help situate 3PM within the broader landscape of projection-based algorithms and provide insights into its potential strengths.

We begin by showing that 3PM recovers the Parallel Circumcentered-Reflection Method (P-CRM) \cite{barros2025parallelizing} when the second-phase projection lands exactly on the boundaries of all support half-spaces. To do so, we recall the notion of a circumcenter, which plays a central role in the formulation of CRM.

\begin{definition}\label{def:circ-m} \emph{(Circumcenter)\textbf{.}}
Let \( \{z_0, z_1, \dots, z_m\} \subset \mathbb{R}^n \). The circumcenter of this set is the point \( c \in \operatorname{aff}\{z_0, z_1, \dots, z_m\} \) such that
\[
\|c - z_0\| = \|c - z_1\| = \dots = \|c - z_m\|.
\]
\end{definition}

This point \( c \) is equidistant from all given reference points and lies within their affine span. It exists and is unique, for example, if \(z_0, z_1, \dots, z_m\) are affinely independent \cite{BauschkeOuyangWang2018}. In the context of CRM, the points \( z_i \) are obtained by reflecting the current iterate \( z_k \) across each set \( U_i \). Specifically, the Parallel CRM iteration is given by
\[
T_{\text{P-CRM}}(z) := \operatorname{circ}\left(z, R_{U_1}(z), \dots, R_{U_m}(z)\right),
\]
where each reflection is defined as \( R_{U_i}(z) := 2 P_{U_i}(z) - z \).

When the projection step in 3PM lands on the boundary of all supporting half-spaces, i.e., when \( x_{k+1} \in \partial S_{ik} \) for all \( i \), the update coincides with that of Parallel CRM. In this case, the intersection of supporting half-spaces used in 3PM aligns with the affine geometry of the reflected points in P-CRM, and both methods yield the same next iterate.

\begin{proposition} \emph{(P-CRM as a special case of 3PM)\textbf{.}}
Suppose that, at iteration \( k \), the projection onto the polyhedron \(\Omega_k\) in 3PM satisfies \( x_{k+1} \in \partial S_{ik} \) for all \( i = 1, \dots, m \); that is, all supporting half-spaces used to define \( \Omega_k \) are active at the projection step. Then, the 3PM update coincides with the P-CRM update of \( x_k \) with respect to the family \( \{U_i\}_{i=1}^m \). In other words,
\[
x_{k+1} = \operatorname{circ}\left(x_k, R_{U_1}(x_k), \dots, R_{U_m}(x_k)\right).
\]
\end{proposition}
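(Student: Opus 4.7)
The plan is to establish two facts about the 3PM iterate $x_{k+1}$: (i) it is equidistant from $x_k$ and from every reflection $R_{U_i}(x_k)$; and (ii) it lies in the affine hull of $\{x_k, R_{U_1}(x_k), \ldots, R_{U_m}(x_k)\}$. Since the circumcenter, when it exists, is the unique point of that affine hull equidistant from the defining points (the locus of equidistant points is an affine subspace orthogonal to the affine hull, intersecting it in at most one point), these two facts together force $x_{k+1} = \operatorname{circ}\bigl(x_k, R_{U_1}(x_k), \ldots, R_{U_m}(x_k)\bigr)$.

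For (i), I would use $R_{U_i}(x_k) = 2 P_{U_i}(x_k) - x_k$ and expand
\[
\|c - R_{U_i}(x_k)\|^2 = \|c - x_k\|^2 + 4 \langle c - x_k,\, x_k - P_{U_i}(x_k)\rangle + 4 \|x_k - P_{U_i}(x_k)\|^2.
\]
Equating with $\|c - x_k\|^2$ and simplifying, the condition $\|c - x_k\| = \|c - R_{U_i}(x_k)\|$ is algebraically equivalent to $\langle c - P_{U_i}(x_k),\, x_k - P_{U_i}(x_k)\rangle = 0$, i.e.\ to $c \in \partial S_{ik}$. The hypothesis that $x_{k+1} \in \partial S_{ik}$ for every $i$ thus immediately yields (i).

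For (ii), I would invoke the KKT optimality conditions for the constrained projection $x_{k+1} = P_{\Omega_k}(x_k)$, noting that the half-space constraints defining $\Omega_k$ are affine so no constraint qualification is needed. Since by hypothesis every constraint is active at $x_{k+1}$, the normal cone to $\Omega_k$ at $x_{k+1}$ is the conic hull of the outward normals $x_k - P_{U_i}(x_k)$, and $x_k - x_{k+1} \in N_{\Omega_k}(x_{k+1})$ gives $\lambda_i \ge 0$ with $x_k - x_{k+1} = \sum_{i=1}^m \lambda_i (x_k - P_{U_i}(x_k))$. Rearranging yields
\[
x_{k+1} = \Bigl(1 - \sum_{i=1}^m \lambda_i\Bigr) x_k + \sum_{i=1}^m \lambda_i P_{U_i}(x_k),
\]
an affine combination (coefficients sum to one) of $x_k$ and the $P_{U_i}(x_k)$. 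Substituting the identity $P_{U_i}(x_k) = \tfrac12 (x_k + R_{U_i}(x_k))$ preserves the sum-to-one property and rewrites $x_{k+1}$ as an affine combination of $\{x_k, R_{U_1}(x_k), \ldots, R_{U_m}(x_k)\}$, establishing (ii).

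I do not anticipate a serious obstacle: the only delicate point is producing the nonnegative multipliers $\lambda_i$, which is routine for a quadratic projection onto finitely many affine half-spaces. Combining (i) and (ii) with the uniqueness of the circumcenter in the affine hull, as noted above, completes the argument and shows that the 3PM update reduces to the P-CRM update in this case.
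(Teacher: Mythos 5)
Your proof is correct, and the equidistance half is essentially the paper's: your algebraic expansion of $\|c-R_{U_i}(x_k)\|^2$ is the same computation the paper performs via the right angle at $p_{ik}:=P_{U_i}(x_k)$ and the Pythagorean theorem. Where you genuinely diverge is the affine-hull step. The paper shows $x_{k+1}\in W_k:=\operatorname{aff}(x_k,p_{1k},\dots,p_{mk})$ by a purely projection-theoretic argument: since $x_{k+1}$ lies in $H:=\bigcap_i \partial S_{ik}\subset\Omega_k$, it equals $P_H(x_k)$; the affine map $P_H$ then sends $x_k$ and every $p_{ik}$, hence all of $W_k$, to $x_{k+1}$; finally $x_{k+1}-P_{W_k}(x_{k+1})$ is shown to lie both in the direction space of $H$ and in its orthogonal complement, so it vanishes. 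You instead invoke the normal-cone (KKT) characterization of the polyhedral projection, $x_k-x_{k+1}=\sum_i\lambda_i\bigl(x_k-P_{U_i}(x_k)\bigr)$ with $\lambda_i\ge 0$, and read off $x_{k+1}$ directly as an affine combination of $x_k$ and the reflections. Your route is more constructive (it even yields nonnegative weights on the reflections) at the cost of importing the standard normal-cone formula for polyhedra; the paper's route stays entirely within projection operators. You are also more explicit than the paper about the final identification step, namely that the locus of points equidistant from $x_k$ and the $R_{U_i}(x_k)$ meets their affine hull in at most one point, so the point you produced must be the circumcenter. One small remark worth a sentence in either write-up: the hypothesis $x_{k+1}\in\partial S_{ik}$ tacitly forces $x_k\notin U_i$ for every $i$ (otherwise $S_{ik}=\mathbb{R}^n$ has empty boundary), so all the normals $x_k-P_{U_i}(x_k)$ are nonzero and the construction is nondegenerate.
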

\begin{proof}
Let us denote \( H_{ik} = \partial S_{ik}\) and  \( W_k = \text{aff}(x_k, p_{1k}, \dots, p_{mk}).\)

Since \( H:=\bigcap_i H_{ik} \) is a convex subset of \( \bigcap_i S_{ik} \) containing \( x_{k+1} \), we conclude that
\[
x_{k+1} = P_{H}(x_k).
\]
Now, since $H$ and $H_{ik}$ are subspaces with $H \subset H_{ik}$, we have
\[
x_{k+1} = P_{H}(x_k) = P_{H}(p_{i k}) \quad \text{for all } i.
\]
We deduce that for every \( w \in W_k\), we also have
\[
x_{k+1} = P_{H}(w).
\]
In particular, 
\[
x_{k+1} = P_{H}(P_{W_k}(x_{k+1})),
\]
which implies that
\begin{equation}\label{pcrmeq1}
x_{k+1} - P_{W_k}(x_{k+1}) \in H^\perp.
\end{equation}
But from \(x_k, p_{i k} \in W_k \), it follows that
\[
x_{k+1} - P_{W_k}(x_{k+1}) \in (\text{span} \{x_k - p_{i k}\})^\perp = H_{ik} \quad \forall\, i.
\]
Therefore,
\begin{equation}\label{pcrmeq2}
x_{k+1} - P_{W_k}(x_{k+1}) \in \bigcap_i H_{ik} = H.
\end{equation}
Uniting \eqref{pcrmeq1} and \eqref{pcrmeq2}, we conclude that
\[
x_{k+1} - P_{W_k}(x_{k+1}) = 0,
\]
that is
\begin{equation} \label{pcrmcond1}
    x_{k+1} \in W_k
\end{equation}
Let us now consider the reflections of \( x_k \) across each set \( U_i \), denoted by
\[
r_{i k} := 2p_{i k} - x_k.
\]
Since by assumption,
\[
\left\langle x_k - p_{i k},\, x_{k+1} - p_{i k} \right\rangle = 0,
\]
we know that the triangle with vertices \( x_k \), \( p_{i k} \), and \( x_{k+1} \) is right-angled at \( p_{i k} \).
Now, by the Pythagorean theorem, we compute the squared distance from \( x_{k+1} \) to the reflection point \( r_{i k} \):
\[
\|x_{k+1} - r_{i k}\|^2 = \|x_{k+1} - p_{i k}\|^2 + \|x_k - p_{i k}\|^2.
\]
Similarly, the distance from \( x_k \) to \( x_{k+1} \) is:
\[
\|x_k - x_{k+1}\|^2 = \|x_{k+1} - p_{i k}\|^2 + \|x_k - p_{i k}\|^2.
\]
So we conclude:
\begin{equation}\label{pcrmcond2}
\|x_{k+1} - r_{i k}\| = \|x_k - x_{k+1}\|, \quad \forall \, i.
\end{equation}
Properties \eqref{pcrmcond1} and \eqref{pcrmcond2} show that 
\[ x_{k+1} = T_{\text{P-CRM}}(x_k).\]
\end{proof}

If the condition in the proposition above holds throughout the iterations of 3PM, and assuming \(\text{int}(U) \neq \emptyset\) with each boundary \( \partial U_i \) locally a differentiable manifold, then Theorem \ref{thsplc} guarantees superlinear convergence. This behavior is reminiscent of the centralized Circumcentered-Reflection Method (cCRM) \cite{BehlingBelloCruzIusemSantos2024}, which also attains superlinear convergence under similar assumptions.

We also recover the classical Method of Alternating Projections (MAP) as a limiting case of 3PM when the number of sets is two and the iterates remain on one of the sets:

\begin{proposition}[MAP as a special case of 3PM]
If \( m = 2 \) and there exists an iteration \( k \) such that \( x_k \in U_1 \cup U_2 \), then the 3PM coincides with MAP from that iteration onward.
\end{proposition}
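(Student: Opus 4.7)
The plan is to show, by a one-step analysis followed by induction, that once an iterate lies in $U_1 \cup U_2$, the 3PM update reduces to the standard alternating projection. Assume without loss of generality that $x_k \in U_1$ (the case $x_k \in U_2$ is symmetric, and if $x_k \in U_1 \cap U_2 = U$ both algorithms stop, so there is nothing to check).

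First I would note that $x_k \in U_1$ gives $P_{U_1}(x_k) = x_k$, so the vector defining $S_{1k}$ in \eqref{sik} is zero, whence $S_{1k} = \mathbb{R}^n$. Consequently $\Omega_k = S_{1k} \cap S_{2k} = S_{2k}$, and the 3PM update becomes
\[
x_{k+1} = P_{\Omega_k}(x_k) = P_{S_{2k}}(x_k).
\]
The key geometric observation is then that $S_{2k}$ is, by construction, the closed half-space whose bounding hyperplane passes through $P_{U_2}(x_k)$ with outward normal $x_k - P_{U_2}(x_k)$. Since $x_k$ lies on the outer side of this hyperplane along precisely that normal direction, the orthogonal projection of $x_k$ onto $S_{2k}$ is the foot of the perpendicular to its boundary, namely $P_{U_2}(x_k)$ itself. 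Formally, one can either invoke the explicit formula for the projection onto a half-space, or verify directly via Proposition \ref{propproj} that $P_{U_2}(x_k) \in S_{2k}$ satisfies $\langle x_k - P_{U_2}(x_k), z - P_{U_2}(x_k)\rangle \le 0$ for every $z \in S_{2k}$. Either way, $x_{k+1} = P_{U_2}(x_k)$, which is exactly the MAP update from $U_1$ to $U_2$.

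To close the argument I would observe that $x_{k+1} = P_{U_2}(x_k) \in U_2$, so the symmetric version of the argument applies at iteration $k+1$, yielding $x_{k+2} = P_{U_1}(x_{k+1})$. Proceeding inductively, 3PM generates exactly the alternating sequence
\[
x_{k+1} = P_{U_2}(x_k), \quad x_{k+2} = P_{U_1}(x_{k+1}), \quad x_{k+3} = P_{U_2}(x_{k+2}), \ldots
\]
from iteration $k$ onward, which coincides with MAP started at $x_k$. The only subtlety — hardly an obstacle — is to confirm that at each step exactly one of $S_{1k}, S_{2k}$ degenerates to $\mathbb{R}^n$ (unless termination occurs because the iterate already lies in $U$), but this follows immediately from the induction hypothesis that the current iterate lies in exactly one of the two sets.
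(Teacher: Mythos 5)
Your proposal is correct and follows essentially the same route as the paper's proof: identify that $S_{1k}=\mathbb{R}^n$ when $x_k\in U_1$, so the polyhedral step reduces to projecting onto the single half-space $S_{2k}$, whose projection is $P_{U_2}(x_k)$, and then alternate by induction. Your explicit verification that the half-space projection lands at $P_{U_2}(x_k)$ and the handling of the $x_k\in U$ edge case are slightly more detailed than the paper's, but the argument is the same.
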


\begin{proof}
Suppose without loss of generality that \( x_k \in U_1 \). Then the projection \( p_{1k} = x_k \), and \( p_{2k} = P_{U_2}(x_k) \). The supporting half-spaces reduce to \( S_{1k} = \mathbb{R}^n \) and \( S_{2k} \), a half-space orthogonal to the segment from \( x_k \) to \( p_{2k} \). The polyhedral projection step in 3PM then becomes:
\[
x_{k+1} = P_{S_{2k}}(x_k) = P_{U_2}(x_k).
\]
In the next iteration, we reverse roles and project back onto \( U_1 \), so 3PM cycles between \( U_1 \) and \( U_2 \), exactly replicating MAP.
\end{proof}

These connections illustrate the versatility of 3PM: it interpolates between and generalizes important existing projection schemes while introducing a novel polyhedral correction step that can improve convergence, especially when \( m > 2 \).

\section{Approximate Parallel Polyhedral Projection Method for Convex Feasibility problems (A3PM)}\label{sec3}

\subsection{A3PM method}

In this section, we introduce the inexact extension of 3PM, called A3PM, which allows for approximate projections in both phases of the method. Exact projections are often computationally expensive or unavailable in closed form. A3PM addresses this challenge by permitting controlled inexactness in the computation of projections while still preserving convergence under mild conditions.

The algorithm starts from an arbitrary point \( x_1 = x \), and at each iteration \( k \geq 1 \), given \( x_k \), it computes approximate projections \( \widehat{P}_{U_i}(x_k, \varepsilon) \) of \( x_k \) onto each set \( U_i \), with a uniform accuracy parameter \( 0 \leq \varepsilon < 1 \). The approximate projection operator \( \widehat{P}_{U_i}(\cdot, \varepsilon) \) is defined as follows:

\begin{definition} \emph{(Approximate projection operator)\textbf{.}}
Let $X$ be a closed nonempty convex set
and let $\varepsilon \in [0,1)$.
An approximate projection ${\widehat P}_X(x,\varepsilon)$ of $x$ onto $X$ with accuracy 
$\varepsilon$
is any
point $\widehat x$
satisfying the two properties (i) and (ii) below:
\begin{itemize}
\item[(i)] \emph{dist}$(\widehat x,X) \leq \varepsilon \, \emph{dist}(x,X)$;
\item[(ii)] $X$ is contained in the set
$
\Big\{z: \langle z-\widehat x,x-\widehat x \rangle \leq 0\Big\}.
$
\end{itemize}
\end{definition}
When $\varepsilon=0$, we get the exact projection
${\widehat P}_X(x,0)=P_X(x)$
and for
$\varepsilon=0$
A3PM becomes 3PM.

The Approximate Parallel Polyhedral Projection Method (A3PM) for Convex Feasibility problems is given below.

\noindent\rule[0.5ex]{1\columnwidth}{1pt}
	
	A3PM
	
	\noindent\rule[0.5ex]{1\columnwidth}{1pt}
	\begin{itemize}
		\item [0.] Let $k=1$, $x_1=x \in \mathbb{R}^n$, $U_1, U_2, \ldots, U_m$, be closed convex sets, and take $\varepsilon \in [0,1)$.
	    \item[1.] Compute ${\widehat P}_{U_1}(x_k,\varepsilon)$, ${\widehat P}_{U_2}(x_k,\varepsilon)$, $\ldots$, ${\widehat P}_{U_m}(x_k,\varepsilon)$.
		\item[2.] Define $\widehat S_{ik}$ by
  \begin{equation}\label{sikb}
\widehat S_{ik} = \Big\{z : \left \langle x_k - {\widehat P}_{U_i}(x_k,\varepsilon),z-{\widehat P}_{U_i}(x_k,\varepsilon) \right \rangle
\leq 0 \Big\}
\end{equation}
and let 
\begin{equation}\label{omkb}
\widehat \Omega_k=\bigcap_{i=1}^m \widehat S_{ik}.
\end{equation}      
Compute 
  $$
  {\widehat P}_{\widehat \Omega_k}(x_k,\varepsilon).
  $$
		\item[3.] Do $x_{k+1} \leftarrow {\widehat P}_{\widehat \Omega_k}(x_k,\varepsilon)$, $k \leftarrow k+1$, and
go to step 1.
	\end{itemize}
	\rule[0.5ex]{1\columnwidth}{1pt}

A more general variant of A3PM allows the accuracy parameter to vary across iterations, using a sequence \((\varepsilon_k)\) in \([0,1)\) instead of a fixed value \(\varepsilon\).

\noindent\rule[0.5ex]{1\columnwidth}{1pt}
	
	A3PM with dynamic accuracy
	
	\noindent\rule[0.5ex]{1\columnwidth}{1pt}
	\begin{itemize}
		\item [0.] Let $k=1$, $x_1=x \in \mathbb{R}^n$, $U_1, U_2, \ldots, U_m$, be closed convex sets, take $(\varepsilon_k)$ a sequence or reals in  $[0,1)$.
	    \item[1.] Compute ${\widehat P}_{U_1}(x_k,\varepsilon_k)$, ${\widehat P}_{U_2}(x_k,\varepsilon_k)$, $\ldots$, ${\widehat P}_{U_m}(x_k,\varepsilon_k)$.
		\item[2.] Define $\widehat S_{ik}$ by
  \begin{equation}\label{sikb}
\widehat S_{ik} = \Big\{z : \left \langle x_k - {\widehat P}_{U_i}(x_k,\varepsilon_k),z-{\widehat P}_{U_i}(x_k,\varepsilon_k) \right \rangle
\leq 0 \Big\}
\end{equation}
and let 
\begin{equation}\label{omkb}
\widehat \Omega_k=\bigcap_i \widehat S_{ik}.
\end{equation}      
Compute 
  $$
  {\widehat P}_{\widehat \Omega_k}(x_k,\varepsilon_k).
  $$
		\item[3.] Do $x_{k+1} \leftarrow {\widehat P}_{\widehat \Omega_k}(x_k,\varepsilon_k)$, $k \leftarrow k+1$, and
go to step 1.
	\end{itemize}
	\rule[0.5ex]{1\columnwidth}{1pt}

Figure \ref{fig:A3PM} illustrates the computation of an iteration of A3PM for 3 sets $U_1$, $U_2$, and $U_3$
in $\mathbb{R}^2$. In this figure we represent:
\begin{itemize}
\item convex sets $U_1$, $U_2$, and $U_3$ and their intersection $U$;
\item an arbitrary initial point $x$;
\item the approximate projections ${\widehat P}_{U_1}(x,\varepsilon)$, ${\widehat P}_{U_2}(x,\varepsilon)$, and
${\widehat P}_{U_3}(x,\varepsilon)$ of $x$ onto respectively $U_1$, $U_2$, and
$U_3$;
\item half-spaces $\widehat S_1$, $\widehat S_2$, and $\widehat S_3$ which play the role of half-spaces $\widehat S_{ik}$ in A3PM; we dropped the iteration index
for simplicity;
\item the intersection $\widehat \Omega=\widehat S_1 \cap \widehat S_2 \cap \widehat S_3$ which plays the role of $\widehat \Omega_k$ in A3PM;
\item the next iterate which is the approximate projection 
${\widehat P}_{\widehat \Omega}(x,\varepsilon)$ of $x$ onto $\widehat \Omega$.
\end{itemize}

\begin{figure}
    \centering
    \includegraphics[width=0.95\textwidth]{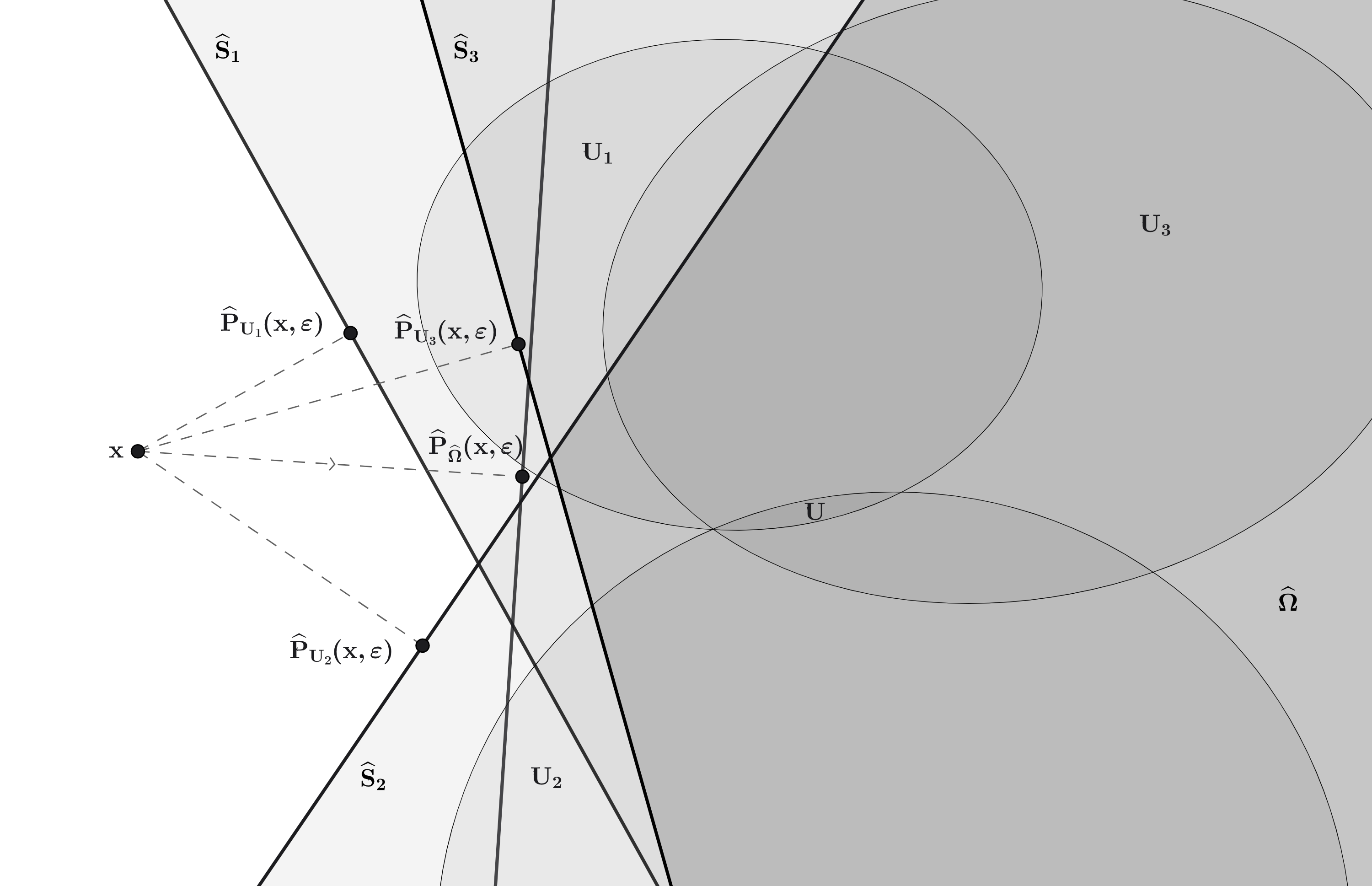}
    \caption{An iteration of A3PM}
    \label{fig:A3PM}
\end{figure}

\subsection{Convergence analysis}

In this section, we study the convergence properties of A3PM in its most general form, where the accuracy level of the approximate projections may vary across iterations. That is, we consider a sequence of accuracies \( (\varepsilon_k) \subset [0,1) \) used in each step of the algorithm. We assume throughout this section that
\[
\sup_k \varepsilon_k < 1.
\]
This dynamic formulation includes, as a special case, the fixed-accuracy variant of A3PM where a single value \( \varepsilon \in [0,1) \) is used at every iteration. Thus, the results below apply to both forms of the algorithm.

We begin with an elementary property of the approximate projection:

\begin{lemma}\label{lemprapp} \emph{(Bound on step size of approximate projection)\textbf{.}}
    For any set $X$ and point $x$, let ${\widehat P}_X(x,\varepsilon)$ be an approximate projection of $x$ onto $X$. Then 
\begin{equation}
    \|x-{\widehat P}_{X}(x,\varepsilon)\| \ge (1-\varepsilon) \, \emph{dist}(x, X).
\end{equation}
\end{lemma}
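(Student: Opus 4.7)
The plan is to prove this via a direct triangle inequality argument using property (i) of the approximate projection definition; property (ii) will not be needed for this lemma.

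First I would fix an arbitrary $y \in X$ and write the triangle inequality
\[
\|x - y\| \leq \|x - \widehat{P}_X(x,\varepsilon)\| + \|\widehat{P}_X(x,\varepsilon) - y\|.
\]
Next, taking the infimum over $y \in X$ on both sides yields
\[
\mathrm{dist}(x, X) \leq \|x - \widehat{P}_X(x,\varepsilon)\| + \mathrm{dist}\bigl(\widehat{P}_X(x,\varepsilon), X\bigr).
\]
Then I would invoke property (i) of the approximate projection, namely $\mathrm{dist}(\widehat{P}_X(x,\varepsilon), X) \leq \varepsilon \, \mathrm{dist}(x, X)$, to obtain
\[
\mathrm{dist}(x, X) \leq \|x - \widehat{P}_X(x,\varepsilon)\| + \varepsilon \, \mathrm{dist}(x, X),
\]
and finally rearrange to conclude that $\|x - \widehat{P}_X(x,\varepsilon)\| \geq (1-\varepsilon)\, \mathrm{dist}(x,X)$.

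There is essentially no obstacle here: the proof is a one-line application of the triangle inequality combined with the accuracy condition in the definition of $\widehat{P}_X(x,\varepsilon)$. The only thing to notice is that $1-\varepsilon > 0$ because $\varepsilon \in [0,1)$, so the inequality is nontrivial and in particular recovers the exact identity $\|x - P_X(x)\| = \mathrm{dist}(x,X)$ when $\varepsilon = 0$.
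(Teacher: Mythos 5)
Your proof is correct and takes essentially the same route as the paper: a triangle-inequality argument combined with property (i) of the approximate projection, followed by rearrangement. The only cosmetic difference is that the paper instantiates the intermediate point as $y = P_X(\widehat{P}_X(x,\varepsilon))$ and uses the reverse triangle inequality, whereas you take an infimum over $y \in X$; the two are interchangeable.
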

\begin{proof} Setting
$\widehat x={\widehat P}_{X}(x,\varepsilon)$,
by definition of the approximate projection,
    \[\text{dist}(\widehat x, X) = \|\widehat x - P_{X}(\widehat x)\| \le \varepsilon \, \text{dist}(x, X).\]
    Hence,
    \begin{align*}
        \|x-\widehat x\| &\ge \|x-P_{X}(\widehat x)\| - \|\widehat x - P_{X}(\widehat x)\|
        \\ &\ge \text{dist}(x, X)- \varepsilon \, \text{dist}(x, X)
        \\ &= (1-\varepsilon) \, \text{dist}(x, X).
    \end{align*}
\end{proof}

The following lemma is for A3PM the analogue of Lemma
\ref{lemmadist}
for 3PM:

\begin{lemma}\label{lemmadistb} \emph{(Vanishing setwise distance for A3PM)\textbf{.}} For any starting point $x_1 \in \mathbb{R}^n$, the sequence $(x_k)$ generated by A3PM satisfies
\begin{equation}\label{first}
\displaystyle \lim_{k \rightarrow +\infty} 
 \max_{i=1,\ldots,m} \emph{dist}(x_k,U_i) =0.
\end{equation}
\end{lemma}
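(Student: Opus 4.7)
The plan is to mirror the proof of Lemma \ref{lemmadist} for exact 3PM, using both defining properties of the approximate projection operator together with the standing assumption $\varepsilon^* := \sup_k \varepsilon_k < 1$.

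First, I would establish the quantitative lower bound
\[
\|x_k - x_{k+1}\| \;\geq\; (1-\varepsilon^*)^2 \, \max_{i=1,\ldots,m} \text{dist}(x_k, U_i).
\]
This is obtained in two stages. Property (ii) of the approximate projection of $x_k$ onto $U_i$ guarantees $U_i \subset \widehat S_{ik}$, and since the Euclidean distance from $x_k$ to the closed half-space $\widehat S_{ik}$ equals exactly $\|x_k - \widehat P_{U_i}(x_k,\varepsilon_k)\|$, the inclusion $\widehat \Omega_k \subset \widehat S_{ik}$ combined with Lemma \ref{lemprapp} gives
\[
\text{dist}(x_k, \widehat \Omega_k) \;\geq\; \|x_k - \widehat P_{U_i}(x_k,\varepsilon_k)\| \;\geq\; (1-\varepsilon_k)\,\text{dist}(x_k, U_i).
\]
Then applying Lemma \ref{lemprapp} once more, this time to the approximate projection of $x_k$ onto $\widehat \Omega_k$, yields $\|x_k - x_{k+1}\| \geq (1-\varepsilon_k)\,\text{dist}(x_k, \widehat \Omega_k)$, and the two estimates combine to the claimed bound since $(1-\varepsilon_k)^2 \geq (1-\varepsilon^*)^2 > 0$.

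Next, I would show the Fejér-type inequality $\|x_k - u\|^2 \geq \|x_k - x_{k+1}\|^2 + \|x_{k+1} - u\|^2$ for every $u \in U$. Property (ii) of the approximate projection of $x_k$ onto $\widehat \Omega_k$ states that $\widehat \Omega_k \subset \{z : \langle z - x_{k+1},\, x_k - x_{k+1}\rangle \leq 0\}$; since $U \subset \widehat \Omega_k$ (as already observed), any $u \in U$ satisfies $\langle x_k - x_{k+1},\, u - x_{k+1}\rangle \leq 0$, and the same polarization used in Lemma \ref{lemmfejer} delivers the inequality.

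Finally, I would close the argument by telescoping. Fixing any $u \in U$, the Fejér inequality gives $\sum_{k=1}^{\infty} \|x_k - x_{k+1}\|^2 \leq \|x_1 - u\|^2 < \infty$, so $\|x_k - x_{k+1}\| \to 0$, and the lower bound from the first step forces $\max_i \text{dist}(x_k, U_i) \to 0$. I do not anticipate a serious obstacle: the only delicate point is the uniform bound $\sup_k \varepsilon_k < 1$, which is crucial to keep the factor $(1-\varepsilon_k)^2$ bounded away from zero and is precisely the standing assumption already in force at the start of the section.
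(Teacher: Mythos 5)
Your proposal is correct and follows essentially the same route as the paper's proof: two applications of Lemma \ref{lemprapp} (to $\widehat\Omega_k$ and to each $U_i$, via the inclusion $\widehat\Omega_k \subset \widehat S_{ik}$ and the identity $\mathrm{dist}(x_k,\widehat S_{ik}) = \|x_k - \widehat P_{U_i}(x_k,\varepsilon_k)\|$) to obtain the $(1-\varepsilon_k)^2$ lower bound, then the Fej\'er-type inequality from property (ii) of the approximate projection onto $\widehat\Omega_k$ together with $U \subset \widehat\Omega_k$, and finally a telescoping argument. The only cosmetic difference is that you sum $\|x_k - x_{k+1}\|^2$ directly, whereas the paper phrases the conclusion via the convergence of the decreasing sequence $(\|u - x_k\|)_k$; these are the same argument.
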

\begin{proof} 
Lemma \ref{lemprapp} with $x=x_k$, $\varepsilon=\varepsilon_k$, and
$X=\widehat \Omega_k$, gives
\begin{equation}\label{firstapp}
    \|x_k-x_{k+1} \| \ge (1-\varepsilon_k) \, \text{dist}(x_k, \widehat \Omega_k).
\end{equation}
Next, notice that 
\begin{equation*}
    \widehat \Omega_k \subset \widehat S_{ik} \implies \text{dist}(x_k, \widehat \Omega_k) \ge \text{dist}(x_k, \widehat S_{ik}) = \|x_k - \widehat P_{U_i}(x_k, \varepsilon_k)\|
\end{equation*}
and using Lemma \ref{lemprapp} with $x=x_k$, $\varepsilon=\varepsilon_k$, and
$X=U_i$, we deduce for all $i$:
\begin{equation}\label{secprox}
    \text{dist}(x_k, \widehat \Omega_k) \ge (1-\varepsilon_k) \, \text{dist}(x_k,U_i).
\end{equation}
Relations \eqref{firstapp} and \eqref{secprox} give
\begin{equation}
    \|x_k-x_{k+1} \| \ge (1-\varepsilon_k)^2 \, \displaystyle \max_{i=1,\ldots,m}\text{dist}(x_k,U_i).
\end{equation}
We have, by definition of $x_{k+1}$ and of the approximate projection, that for all $u \in \widehat \Omega_k$
\begin{equation}\label{secapph}
\langle u-x_{k+1}, x_k-x_{k+1} \rangle \le 0
\end{equation}
and since $U_i \subset \widehat S_{ik}$, $U \subset \widehat \Omega_k$, we also have for every $u \in U$ that
\eqref{secapph} holds. It follows that for every $u \in U$,
\begin{equation}\label{conveqapprox}
    \begin{array}{lcl}
        \|u-x_k\|^2 & = & \|x_k-x_{k+1}\|^2 + \|u-x_{k+1}\|^2
        +2\langle u-x_{k+1},x_{k+1}-x_k\rangle
        \\ & \ge & \displaystyle (1-\varepsilon_k)^4 \, \max_{i=1,\ldots,m} \left \{ \text{dist}(x_k,U_i)^2 \right\} + \|u-x_{k+1}\|^2.
        \\ & \ge & \displaystyle (1-\sup \varepsilon_k)^4 \, \max_{i=1,\ldots,m} \left \{ \text{dist}(x_k,U_i)^2 \right\} + \|u-x_{k+1}\|^2.
\end{array}
\end{equation}
We conclude that $(\|u-x_{k}\|)_k$ is decreasing and nonnegative, so it is convergent. Then, since $\sup \varepsilon_k < 1,$ we obtain 
\eqref{first}.
\end{proof}

\if{
Notice that 
\begin{equation*}
    \widehat \Omega_k \subset \widehat S_{ik} \implies \text{dist}(x_k, \widehat \Omega_k) \ge \text{dist}(x_k, \widehat S_{ik}) = \|x_k - \widehat P_{U_i}(x_k, \varepsilon_k)\|
\end{equation*}

so, again by the lemma,

\begin{equation}
    \text{dist}(x_k, \widehat \Omega_k) \ge (1-\varepsilon_k) \, \text{dist}(x_k,U_i).
\end{equation}

Hence 

\begin{equation}
    \|x_k-x_{k+1} \| \ge (1-\varepsilon_k)^2 \, \text{dist}(x_k,U_i).
\end{equation}

Given any \(u \in U,\) from \(\langle u-x_{k+1}, x_k-x_{k+1} \rangle \le 0\) we get
\begin{align}
        \|u-x_k\|^2 & \ge \|x_k-x_{k+1}\|^2 + \|u-x_{k+1}\|^2
        \\ & \ge (1-\varepsilon_k)^4 \, \max_{i} \left \{ \text{dist}(x_k,U_i)^2 \right\} + \|u-x_{k+1}\|^2.
\end{align}

We conclude that $(\|u-x_{k}\|)_k$ is decreasing and nonnegative, so it is convergent. 

Then if $\sup \varepsilon_k < 1,$ we get
\begin{equation}
    \max_{i} \left \{ \text{dist}(x_k,U_i) \right\} \to 0
\end{equation}
}\fi

The following theorem proves the convergence of A3PM.
\begin{theorem}\label{convA3PM} \emph{(Global convergence of A3PM)\textbf{.}} For any starting point $x_1 \in \mathbb{R}^n$, the sequence $(x_k)$ generated by A3PM converges to a point 
$\bar x \in U$.
\end{theorem}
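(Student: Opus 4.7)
The plan is to mirror the structure of the proof of Theorem \ref{conv3PM} (global convergence of the exact method), taking advantage of what has already been extracted inside the proof of Lemma \ref{lemmadistb}. The key observation is that inequality \eqref{conveqapprox} in that proof already shows
\[
\|u - x_{k+1}\|^2 \le \|u - x_k\|^2, \quad \forall\, u \in U,
\]
so the A3PM sequence $(x_k)$ is in fact Fejér monotone with respect to $U$, exactly as in the exact case. This Fejér monotonicity is the workhorse for all remaining steps.

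First, I would use Fejér monotonicity to conclude that $(x_k)$ is bounded: for any fixed $u \in U$, $\|x_k\| \le \|u\| + \|x_1 - u\|$ for every $k$. Therefore $(x_k)$ admits at least one accumulation point. Let $\bar x$ be such an accumulation point and let $\varphi$ be an index function with $x_{\varphi(k)} \to \bar x$. Lemma \ref{lemmadistb} gives
\[
\lim_{k \to \infty} \max_{i=1,\ldots,m} \operatorname{dist}(x_{\varphi(k)}, U_i) = 0,
\]
and continuity of $y \mapsto \operatorname{dist}(y, U_i)$ yields $\max_i \operatorname{dist}(\bar x, U_i) = 0$, so $\bar x \in U$. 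This is the analogue of Lemma \ref{bdedappproj} for A3PM.

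It remains to show that the accumulation point is unique, and here I would copy verbatim the uniqueness argument from the proof of Theorem \ref{conv3PM}. Suppose $\bar x$ and $\bar y$ are two accumulation points, with $x_{\varphi(k)} \to \bar x$ and $x_{\psi(k)} \to \bar y$; both belong to $U$ by the previous step. Fix $\ell$. For every $k$ sufficiently large with $\psi(k) \ge \varphi(\ell)$, Fejér monotonicity applied to the point $\bar x \in U$ gives
\[
\|\bar x - x_{\psi(k)}\| \le \|\bar x - x_{\varphi(\ell)}\|.
\]
Letting $k \to \infty$ produces $\|\bar x - \bar y\| \le \|\bar x - x_{\varphi(\ell)}\|$, and then letting $\ell \to \infty$ produces $\|\bar x - \bar y\| \le 0$, so $\bar x = \bar y$. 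Consequently the whole sequence $(x_k)$ converges to the unique accumulation point $\bar x \in U$.

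There is essentially no genuine obstacle here, since Lemma \ref{lemmadistb} already did the hard analytic work (controlling the inexactness through the factor $(1 - \sup_k \varepsilon_k)^4$ and establishing both the vanishing setwise distance and Fejér monotonicity). The only care required is to notice that \eqref{conveqapprox} supplies the Fejér inequality for free, so no new estimate involving $\varepsilon_k$ needs to be derived; the rest is topology.
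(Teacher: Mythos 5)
Your proposal is correct and follows exactly the paper's route: the paper's own proof of Theorem \ref{convA3PM} simply observes that \eqref{conveqapprox} yields the Fejér inequality $\|x_{k+1}-u\|\le\|x_k-u\|$ for all $u\in U$, that Lemma \ref{lemmadistb} replaces Lemma \ref{lemmadist}, and that the boundedness, accumulation-point, and uniqueness arguments of Lemma \ref{bdedappproj} and Theorem \ref{conv3PM} then carry over verbatim. You have merely written out in full the details the paper leaves implicit.
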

\begin{proof} Recall that Lemma \ref{lemmadistb} for A3PM
is the analogue of Lemma \ref{lemmadist} for 3PM.
Also observe that Lemma \ref{bdedappproj} still holds
for A3PM since the relation $\|x_{k+1}-u\| \leq \|x_k-u\|$
for all $u \in U$ on which it is based also
holds. We can then prove Theorem \ref{convA3PM} similarly to 
Theorem \ref{conv3PM}.
\end{proof}

\if{
Also, $(x_k)$ is bounded. 

Taking any convergent subsequence \(x_{n_k} \to \bar{x}\), by continuity of distance, 
\begin{equation}
\max_{i} \left \{ \text{dist}(\bar{x},U_i) \right\} = 0 \implies \bar{x} \in U.
\end{equation}

Now taking $u=\bar{x}$, we know that $(\|\bar{x}-x_{k}\|)_k$ is convergent, then the limit should be $0.$

}\fi

We conclude the analysis of A3PM by showing that linear convergence is also retained under the error bound condition.

\begin{theorem}\label{linconvA3PM} \emph{(Linear convergence of A3PM)\textbf{.}} Let $(x_k)$ be the sequence generated by A3PM for a starting point $x_1 \in \mathbb{R}^n$. Assume that \ref{EB} holds for $ \{U_i\}_{i=1}^m $ around the limit point $\bar x$ of $(x_k)$. Then $(x_k)$ converges linearly to $\bar x$.
\end{theorem}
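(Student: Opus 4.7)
The plan is to mirror the proof of Theorem~\ref{linconv3PM} almost verbatim, replacing the exact 3PM descent inequality by its inexact counterpart already established inside the proof of Lemma~\ref{lemmadistb}. The ingredients I would assume from earlier in the paper are: (i) the recursion
\begin{equation*}
\|u-x_k\|^2 \geq (1-\bar{\varepsilon})^4 \max_{i=1,\ldots,m}\text{dist}(x_k,U_i)^2 + \|u-x_{k+1}\|^2 \quad \forall\, u \in U,
\end{equation*}
where $\bar{\varepsilon} := \sup_k \varepsilon_k < 1$, derived in the proof of Lemma~\ref{lemmadistb}; (ii) the Fejér monotonicity of $(x_k)$ with respect to $U$, observed in the proof of Theorem~\ref{convA3PM}; (iii) the \ref{EB} condition at $\bar{x}$; and (iv) Proposition~\ref{fejerconv}.

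First I would specialize the recursion to $u=\bar x$, obtaining
\begin{equation*}
\|x_k-\bar{x}\|^2 \geq (1-\bar{\varepsilon})^4 \max_{i=1,\ldots,m}\text{dist}(x_k,U_i)^2 + \|x_{k+1}-\bar{x}\|^2.
\end{equation*}
For $k$ large enough, $x_k$ lies in the \ref{EB}-neighborhood $V$ of $\bar x$, so $\max_i \text{dist}(x_k,U_i) \geq \omega\, \text{dist}(x_k,U)$. Because A3PM is Fejér monotone with respect to $U$ and $x_k \to \bar{x}$, Proposition~\ref{fejerconv} gives $\text{dist}(x_k,U) \geq \tfrac{1}{2}\|x_k-\bar{x}\|$. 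Chaining these three bounds yields
\begin{equation*}
\|x_k-\bar{x}\|^2 \geq \frac{(1-\bar{\varepsilon})^4 \omega^2}{4}\|x_k-\bar{x}\|^2 + \|x_{k+1}-\bar{x}\|^2,
\end{equation*}
which rearranges to
\begin{equation*}
\|x_{k+1}-\bar{x}\| \leq \sqrt{1 - \frac{(1-\bar{\varepsilon})^4\,\omega^2}{4}}\;\|x_k-\bar{x}\|.
\end{equation*}
Since $\bar{\varepsilon}<1$ and $\omega\in(0,1)$, the contraction constant is strictly less than $1$, establishing linear convergence; it smoothly recovers the 3PM rate $\sqrt{1-\omega^2/4}$ as $\bar{\varepsilon} \to 0$.

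I do not expect any serious obstacle: the argument is essentially a direct transcription of the proof of Theorem~\ref{linconv3PM} with the approximation slack $(1-\bar{\varepsilon})^4$ propagated through the chain of inequalities. The only items worth double-checking are that Proposition~\ref{fejerconv} indeed applies to A3PM (its hypotheses are exactly Fejér monotonicity and convergence to $\bar x$, both already at hand from Theorem~\ref{convA3PM}), and that the \ref{EB} neighborhood inclusion $x_k \in V$ holds from some index onward, which follows from $x_k \to \bar{x}\in U$.
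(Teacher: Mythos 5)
Your proposal is correct and follows essentially the same route as the paper's own proof: specialize the inexact descent inequality from the proof of Lemma~\ref{lemmadistb} to $u=\bar x$, invoke \ref{EB} and Proposition~\ref{fejerconv} via Fej\'er monotonicity, and rearrange to obtain the contraction factor $\sqrt{1-(1-\sup_k\varepsilon_k)^4\,\omega^2/4}$. The paper arrives at exactly the same constant, so there is nothing to add.
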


\begin{proof} By \ref{EB}, for some $\omega \in (0,1)$, we have
\begin{equation}
    \displaystyle \omega \, \text{dist}(x_k,U) \leq \max_{i=1,\ldots,m} \text{dist}(x_k,U_i)
\end{equation}
for every sufficiently large $k$. Notice that $(x_k)$ is Fejér monotone with respect to $U$ by \eqref{conveqapprox}. Hence we can apply Proposition \ref{fejerconv} to conclude that
\begin{equation}\label{rlindist}
    \displaystyle \text{dist}(x_k,U) \geq \frac{1}{2}\|x_k - \bar{x}\|
\end{equation}
for any $k$. Now from \eqref{conveqapprox} again with $u = \bar{x}$, 
\begin{equation}\label{linconveq1}
    \begin{array}{lcl} 
    \|x_{k}-\bar{x}\|^2 & \geq & \displaystyle (1-\varepsilon_k)^4 \, \max_{i=1,\ldots,m} \left \{ \text{dist}(x_k,U_i)^2 \right\} + \|x_{k+1}-\bar{x}\|^2
    \\ & \geq & (1-\varepsilon_k)^4 \, \omega^2 \, \text{dist}(x_k,U)^2 + \|x_{k+1}-\bar{x}\|^2
    \\ & \geq & \displaystyle (1-\varepsilon_k)^4 \, \frac{\omega^2}{4}\|x_k - \bar{x}\|^2 + \|x_{k+1}-\bar{x}\|^2
    \\ & \geq & \displaystyle (1-\sup \varepsilon_k)^4 \, \frac{\omega^2}{4}\|x_k - \bar{x}\|^2 + \|x_{k+1}-\bar{x}\|^2.
\end{array}
\end{equation}
Rearranging the terms, we arrive at
\begin{equation}\label{linconveq2}
\displaystyle \sqrt{1-(1- \sup \varepsilon_k)^4 \,\frac{\omega^2}{4}} \,\|x_{k}-\bar{x}\| \geq \|x_{k+1}-\bar{x}\|,
\end{equation}
establishing the linear convergence of A3PM.

\end{proof}

\section{Numerical experiments}\label{sec4}

To see how well the proposed 3PM and A3PM methods perform in practice,
we consider the problem of finding a point in the intersection of
$m$ ellipsoids $U_1,$ $\ldots,$ $U_m$ in $\mathbb{R}^n$ where ellipsoid $U_i$ is centered
at $y_i \in \mathbb{R}^n$ and is given by
\begin{equation}
U_i=\{y \in \mathbb{R}^n: (y-y_i)^T Q_i (y-y_i) \leq \eta_i^2\}
\end{equation}
for some positive $\eta_i$ and definite positive matrices $Q_i$.
We generate the centers
$y_i$ randomly 
and $Q_i$ of the form
$Q_i=A_i A_i^T + \lambda_i I_n$ where
matrices $A_i$ and positive $\lambda_i$ are generated randomly too. 
In that manner, matrices $Q_i$ are definite positive, as desired. We also choose $\eta_i$ sufficiently large so that the intersection of sets $U_i$ contains
the unit ball
$B=\{x: \|x\|_2 \leq 1\}$, namely
\begin{equation}\label{eqki}
\eta_i \geq (1+\|y_i\|_2) \sqrt{\|Q_i\|_2}
\end{equation}
for all $i$ (in particular, the intersection is nonempty). Indeed if \eqref{eqki} is satisfied and $x$ belongs to $B$ then
$$
(x-y_i)^T Q_i (x-y_i) \leq \|x-y_i\|_2^2 \|Q_i\|_2 \leq (1+\|y_i\|_2)^2 \|Q_i\|_2 \leq 
\eta_i^2
$$
and therefore $x \in U_i$. Initial points $x_0$ are also
generated randomly outside the ellipsoids.

For convenience, in what follows, for
$i=1,\ldots,m$,
we define
function $g_i(y)=(y-y_i)^T Q_i (y-y_i)-\eta_i^2$ which is differentiable with gradient
$g_i'(y)=2Q_i(y-y_i)$.
We compare 3PM and A3PM with four competing methods: cyclic projections,
Cimmino's method, Successive Centralized CRM (SCCRM) from
\cite{BehlingBelloCruzIusemSantos2024}, and CRM in the product space (denoted by CRM in what follows) from \cite{crmprod}. 

We now briefly describe the computations
of the iterations of these methods. 

For the details of A3PM, we will use the
approximate projection proposed by Fukushima \cite{Fukushima1983} that we start recalling.
More precisely, the following proposition defines an approximate projector over a closed convex set, which, without loss of generality, can be written under the form
$\{x \in \mathbb{R}^n : g(x) \leq 0\}$ for
$g:\mathbb{R}^n \rightarrow \mathbb{R}$.

\begin{proposition}\label{apprproj}
Consider the
 closed convex set \(K := \{ x \mid g(x) \leq 0 \}\) where \(g(x): \mathbb{R}^n \rightarrow \mathbb{R} \) is convex.
Suppose 
that there exists
$\hat x$ such that
$ g(\hat{x}) < 0$ (Slater condition).  Let \( g^\prime \) satisfy \( g^\prime(x) \in \partial g(x) \) for any \( x \).  
Moreover, let  
\[
\tilde{p}(x) := \underset{y}{\operatorname{argmin}} \|y - x \| \quad \text{s.t.} \quad g(x) + g^\prime(x)^\top (y - x) \leq 0.
\]
Then
 $$
  \displaystyle \tilde{p}(x) =
    \begin{cases}
x, & \text{if } g(x) \leq 0\\ 
    x - \dfrac{g(x)}{\| g^\prime(x) \|^2} g^\prime(x), & \text{otherwise},
    \end{cases}
$$
and $\tilde{p}$  
is an
$\varepsilon$-approximate projector over $K$ for some 
$0<\varepsilon<1$.
\end{proposition}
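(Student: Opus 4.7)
The plan is to handle the closed-form formula for $\tilde{p}(x)$ first, then verify the two defining properties (i) and (ii) of an approximate projection separately.

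For the explicit formula: if $g(x)\le 0$, then $y=x$ satisfies the linearized constraint $g(x)+g'(x)^\top(y-x)\le 0$, so it trivially minimizes $\|y-x\|$, giving $\tilde{p}(x)=x$. If $g(x)>0$, the constraint defines the half-space $H=\{y:\,g'(x)^\top y\le g'(x)^\top x - g(x)\}$ and $x\notin H$; also $g'(x)\ne 0$, since otherwise the subgradient inequality $g(\hat x)\ge g(x)+g'(x)^\top(\hat x - x)=g(x)>0$ would contradict Slater. The classical closed form for projection onto a half-space then yields $\tilde{p}(x)=x-\frac{g(x)}{\|g'(x)\|^2}g'(x)$.

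Property (ii) is a direct consequence of the subgradient inequality: for any $z\in K$, $g(z)\ge g(x)+g'(x)^\top(z-x)$ together with $g(z)\le 0$ forces $g(x)+g'(x)^\top(z-x)\le 0$. When $g(x)>0$, substituting $x-\tilde{p}(x)=\frac{g(x)}{\|g'(x)\|^2}g'(x)$ rewrites this inequality as $\langle z-\tilde{p}(x),x-\tilde{p}(x)\rangle\le 0$; when $g(x)\le 0$, $\tilde{p}(x)=x$ makes the inner product vanish. For property (i), the case $g(x)\le 0$ is trivial; for $g(x)>0$, let $p:=P_K(x)$. Applying (ii) with $z=p$ implies the triangle $x\to\tilde{p}(x)\to p$ has a right angle at $\tilde{p}(x)$, so by Pythagoras
$$\mbox{dist}(\tilde{p}(x),K)^2 \,\le\, \|\tilde{p}(x)-p\|^2 \,\le\, \|x-p\|^2-\|x-\tilde{p}(x)\|^2 \,=\, \mbox{dist}(x,K)^2-\|x-\tilde{p}(x)\|^2.$$
Hence it suffices to exhibit a constant $c\in(0,1]$ such that $\|x-\tilde{p}(x)\|\ge c\,\mbox{dist}(x,K)$, which gives $\varepsilon=\sqrt{1-c^2}\in[0,1)$.

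The main obstacle is producing such a uniform $c$, and this is exactly where the Slater condition is essential. The function $t\mapsto g(x+t(\hat x-x))$ is convex, positive at $t=0$ and strictly negative at $t=1$, so it vanishes at some $t^*\le \frac{g(x)}{g(x)-g(\hat x)}$; the corresponding point lies in $K$, yielding $\mbox{dist}(x,K)\le \frac{g(x)}{-g(\hat x)}\|x-\hat x\|$. Combining with $\|x-\tilde{p}(x)\|=g(x)/\|g'(x)\|$ gives
$$\|x-\tilde{p}(x)\| \,\ge\, \frac{-g(\hat x)}{\|g'(x)\|\,\|x-\hat x\|}\,\mbox{dist}(x,K).$$
To upgrade this pointwise bound to a uniform $c>0$, one restricts to a bounded region containing all iterates of interest (such a region exists for the A3PM iterates by Fej\'er monotonicity with respect to $U$), over which $\|x-\hat x\|$ is bounded and $\|g'(x)\|$ is bounded above by continuity/boundedness of the subgradient on compacts. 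This produces the uniform $\varepsilon\in(0,1)$ claimed in the statement.
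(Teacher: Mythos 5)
Your proof is correct, and in fact it supplies an argument the paper does not give: the paper states this proposition as a recollection of Fukushima's construction and cites \cite{Fukushima1983} without proof, so there is no in-paper argument to compare against. Your derivation of the closed form (half-space projection, with $g'(x)\neq 0$ forced by Slater when $g(x)>0$), your verification of property (ii) via the subgradient inequality, and the Pythagorean estimate $\operatorname{dist}(\tilde p(x),K)^2\le \operatorname{dist}(x,K)^2-\|x-\tilde p(x)\|^2$ are all sound. You also correctly identified the one genuinely delicate point: the pointwise lower bound $\|x-\tilde p(x)\|\ge \frac{-g(\hat x)}{\|g'(x)\|\,\|x-\hat x\|}\operatorname{dist}(x,K)$ does not yield a single $\varepsilon<1$ valid on all of $\mathbb{R}^n$. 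This is not pedantry --- for $g(x)=e^x-1$ in $\mathbb{R}$ one has $\operatorname{dist}(\tilde p(x),K)/\operatorname{dist}(x,K)=(x-1+e^{-x})/x\to 1$ as $x\to\infty$, so the proposition as literally stated fails without some localization. Your fix (restrict to a compact set containing the iterates, which exists by Fej\'er monotonicity, and use local boundedness of the subdifferential of a finite convex function) is exactly the right one and matches how the result is actually used in the A3PM analysis. Two cosmetic remarks: the angle at $\tilde p(x)$ is obtuse-or-right rather than exactly right (your displayed inequality is the correct obtuse-angle version, so nothing breaks), and one should cap $c$ at $1$ before setting $\varepsilon=\sqrt{1-c^2}$, which you implicitly do by taking $c\in(0,1]$.
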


We are now ready to provide the details of the computation of all methods.\\

\par {\textbf{3PM.}} At iteration $k$, given $x_k$, 3PM computes 
\begin{equation}
p_{i k}=\left\{
\begin{array}{l}
\displaystyle \underset{y \in \mathbb{R}^n}{\argmin} \, \frac{1}{2}\|y-x_k\|_2^2 \\ 
g_i(y) \leq 0,
\end{array}
\right.
\end{equation}
for $i=1,\ldots,m$
and then sets
\begin{equation}
x_{k+1} = \left\{
\begin{array}{l}
\displaystyle \underset{y \in \mathbb{R}^n}{\argmin} \, \frac{1}{2}\|y-x_k\|_2^2 \\ 
(x_k-p_{i k})^T (y-p_{i k}) \leq 0,\;i=1,\ldots,m.
\end{array}
\right.
\end{equation}

\par {\textbf{A3PM.}} 
We first compute the approximate projections over the ellipsoids using Proposition \ref{apprproj}.
At iteration $k$, for $i=1,\ldots,m$, if
$g_i(x_k)\leq 0$, we compute
$$
p_{ik}=x_k,
$$
otherwise, we compute
$$
p_{ik}=x_k-\frac{g_i(x_k)}{\|g_i'(x_k)\|_2^2} g_i'(x_k).
$$
For the final step of A3PM, we look for an
approximate projection of $x_k$
onto the closed convex set 
$K=\{x: h(x) \leq 0\}$ where
$h(x)=\max_{i=1,\ldots,m} h_i(x)$ for
$$
h_i(x)=\langle x_k-p_{ik},x-p_{ik}\rangle.
$$
A subgradient $h'(x_k)$ of $h$ at $x_k$ is given by 
$$
h'(x_k)=x_k-p_{i_k k}
$$
where index $i_k$ is any index $i$ satisfying $h(x_k)=h_i(x_k)$.

With this notation, if $h(x_{k}) \leq 0$ then the approximate projection is $x_{k+1}=x_k$ otherwise it is given by
$$
x_{k+1}=x_k-\frac{h(x_k)}{\|h'(x_k)\|_2^2}h'(x_k).
$$
Though the letter P in 3PM and A3PM stands for Parallel and refers to the fact that 
the projections can be computed in parallel in these methods, we can also implement nonparallel variants
of these methods where projections are computed
sequentially.
In what follows, we denote
by 3PM and A3PM the nonparallel
implementations while the parallel implementations are denoted by 3PM $\parallel$ and
A3PM $\parallel$, respectively.\\

\par {\textbf{Cyclic projections.}} For iteration $k$, given $x_k$, set
$x=x_k$ and for $i=1,\ldots,m$,  do iteratively
\begin{equation}
x \leftarrow \left\{
\begin{array}{l}
\displaystyle \underset{y \in \mathbb{R}^n}{\argmin} \, \frac{1}{2}\|y-x\|_2^2 \\ 
g_i(y) \leq 0.
\end{array}
\right.
\end{equation}
After these $m$ updates (projections), do $x_{k+1} \leftarrow x$,
$k \leftarrow k+1$.\\

\par {\textbf{Cimmino's method.}} At iteration $k$, do
$$
x_{k+1} \leftarrow \frac{1}{m} \sum_{i=1}^m \left\{
\begin{array}{l}
\displaystyle \underset{y \in \mathbb{R}^n}{\argmin} \, \frac{1}{2}\|y-x_k\|_2^2 \\ 
g_i(y) \leq 0,
\end{array}
\right.
$$
and set $k \leftarrow k+1$. For Cimmino's method, we can also consider a parallel implementation which computes in parallel the $m$ projections. We denote by Cimmino $\parallel$ the corresponding implementation.\\

\par {\textbf{Successive Centralized CRM (SCCRM).}} We now recall the SCCRM method
described in \cite{succccrm}.
We first define the alternating projection operator
$Z_{A,B}:\mathbb{R}^n \rightarrow \mathbb{R}^n$
given by
$$
Z_{A,B}=P_A \circ P_B
$$
as well as the
simultaneous projection operator
$\tilde Z_{A,B}:\mathbb{R}^n \rightarrow \mathbb{R}^n$ given by
$$
\tilde Z_{A,B}=\frac{1}{2}[P_A+P_B].
$$
We also define the operator
$\bar Z_{A,B}:\mathbb{R}^n \rightarrow \mathbb{R}^n$ by
$$
\bar Z_{A,B}=\tilde Z_{A,B} \circ Z_{A,B}
$$
and the operator
$$
\mathcal{C}_{A,B}(z)=\mbox{circ}(x,R_A(x),R_B(x)).
$$
where 
$R_A=2P_A-I$, $R_B=2P_B-I$, $\mbox{circ}(a,b,c)$ is the circumcenter of
points $a,b,$ and $c$.
With this notation, we finally define the operator $T_{A,B}:\mathbb{R}^n \rightarrow \mathbb{R}^n$
by
$$
T_{A,B}(x)=\mathcal{C}_{A,B}(\bar Z_{A,B}(x)).
$$
The
SCCRM method for the convex feasibility problem computes
$$
x_{k+1}=T_{U_{r(k)} U_{\ell(k)}}(x_k)
$$
where
the sequences $\{\ell(k)\},\{r(k)\}$ determining which sets are used at 
$k$th iteration are called control sequences. 
 We use the following control sequence $\ell(k) = 1,2,3,\ldots,m-1,m,1,2,\ldots$, and $r(k)=2,3,4,\ldots,m-1,m,1,2,3,\ldots$ To complete the description of SCCRM, we now explain how to compute the circumcenter of a set of points.
Given \( m + 1 \) points \( x_0, x_1, x_2, \ldots, x_m \in \mathbb R^n \), their circumcenter, denoted by \( \text{circ}(x_0, x_1, x_2, \ldots, x_m) \), is defined as the unique point that lies within the affine subspace spanned by these points and that is equidistant from each of them.
By this definition, we must have, for some $\alpha \in \mathbb R^m,$
\[\text{circ}(x_0, x_1, x_2, \ldots, x_m) = x_0 + \sum_{j=1}^m \alpha_j (x_j - x_0),\]
and, for all $i,$
\[
\|\text{circ}(x_0, x_1, x_2, \ldots, x_m)-x_i\| = \|\text{circ}(x_0, x_1, x_2, \ldots, x_m)-x_0\|.
\]
These conditions lead to an \( m \times m \) linear system in the coefficients \( \alpha \in \mathbb R^m \), where the \( i \)-th equation is
\[
\sum_{j=1}^m \alpha_j \langle x_j - x_0, x_i - x_0 \rangle = \frac{1}{2} \| x_i - x_0 \|^2,\]
or
\[
\begin{pmatrix}
\langle x_1 - x_0, x_1 - x_0 \rangle & \langle x_2 - x_0, x_1 - x_0 \rangle & \cdots & \langle x_m - x_0, x_1 - x_0 \rangle \\
\langle x_1 - x_0, x_2 - x_0 \rangle & \langle x_2 - x_0, x_2 - x_0\rangle & \cdots & \langle x_m - x_0, x_2 - x_0 \rangle \\
\vdots & \vdots & \ddots & \vdots \\
\langle x_1 - x_0, x_m - x_0 \rangle & \langle x_2 - x_0, x_m - x_0 \rangle & \cdots & \langle x_m - x_0, x_m - x_0 \rangle
\end{pmatrix}
\begin{pmatrix}
\alpha_1 \\
\alpha_2 \\
\vdots \\
\alpha_m
\end{pmatrix}
= \frac{1}{2}
\begin{pmatrix}
\| x_1 - x_0 \|^2 \\
\| x_2 - x_0 \|^2 \\
\vdots \\
\| x_m - x_0 \|^2
\end{pmatrix}.
\]
Solving this system determines the circumcenter \(\text{circ}(x_0, x_1, x_2, \ldots, x_m)\).\\

\par {\textbf{CRM in product space.}}
Take $x_0  \in \mathbb{R}^n$
and define
$z_0=(x_0,x_0,\ldots,x_0) \in \mathbb{R}^{n m}$.
Let 
$W=U_1 \times U_2  \times \ldots \times U_m$ and
$D=\{(x,x,\ldots,x) \in \mathbb{R}^{nm}: x \in \mathbb{R}^n\}$.
For $z=(x^{(1)},x^{(2)},\ldots,x^{(m)}) \in \mathbb{R}^{nm}$, the projection
$P_D(z)$ of $z$ onto $D$ is given by
$$
P_D(z)=\frac{1}{m}\left(\sum_{i=1}^m x^{(i)},\sum_{i=1}^m x^{(i)},\ldots,\sum_{i=1}^m x^{(i)} \right)
$$
while the projection $P_W(z)$ of $z$
onto $W$ is given by
$$
P_W(z)=\left(P_{U_1}(x^{(1)}),P_{U_2}(x^{(2)}),\ldots,P_{U_m}(x^{(m)})\right).
$$
The iterations of CRM in the product space $W \times D$ are given by
$$
z^{k+1}=\mbox{circ}(z^k,R_W(z^k),R_D(R_W(z^k)))
$$
where $R_W=2P_W-I$ and 
$R_D=2P_D-I$.

We say that a point $y$ is an 
$\varepsilon$-approximate solution of the problem
if $(y-y_i)^T Q_i (y-y_i) \leq (\eta_i+\varepsilon)^2$
for all $i$. All methods are run until we find
an $\varepsilon$-approximate solution or stop
if after 10 minutes no $\varepsilon$-approximate solution was found. We call
constraints violation at termination
the quantity 
$$
\displaystyle \max_{i=1,\ldots,m} (y-y_i)^T Q_i (y-y_i)-(\eta_i+\varepsilon)^2,
$$
which should be nonpositive
at termination, unless the method was run for at least 10 minutes
without finding an $\varepsilon$-approximate solution.

We first run the methods for $m=3$ ellipsoids
in $\mathbb{R}^2$. For this setup, we plot in Figure 
\ref{fig:QCC} these three ellipsoids and represent the evolution of the iterates for all methods.
As expected, all methods stop after finding a point
in the intersection of the ellipsoids or very close to this intersection. The plot also represents the unit ball for the $\|\cdot\|_2$-norm which, as we recall, is contained in all ellipsoids.

\begin{figure}
    \centering
    \begin{tabular}{cc}
\includegraphics[scale=0.4]{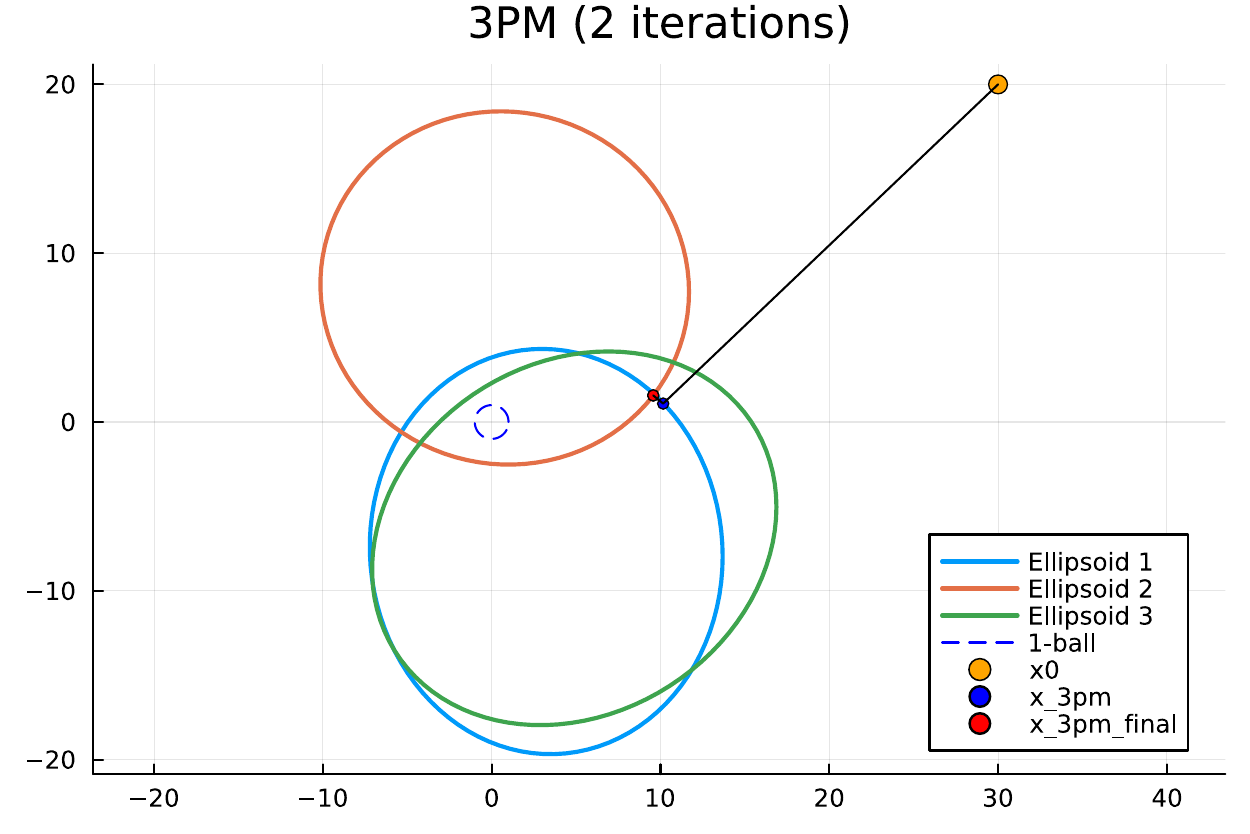}
¨& \includegraphics[scale=0.4]{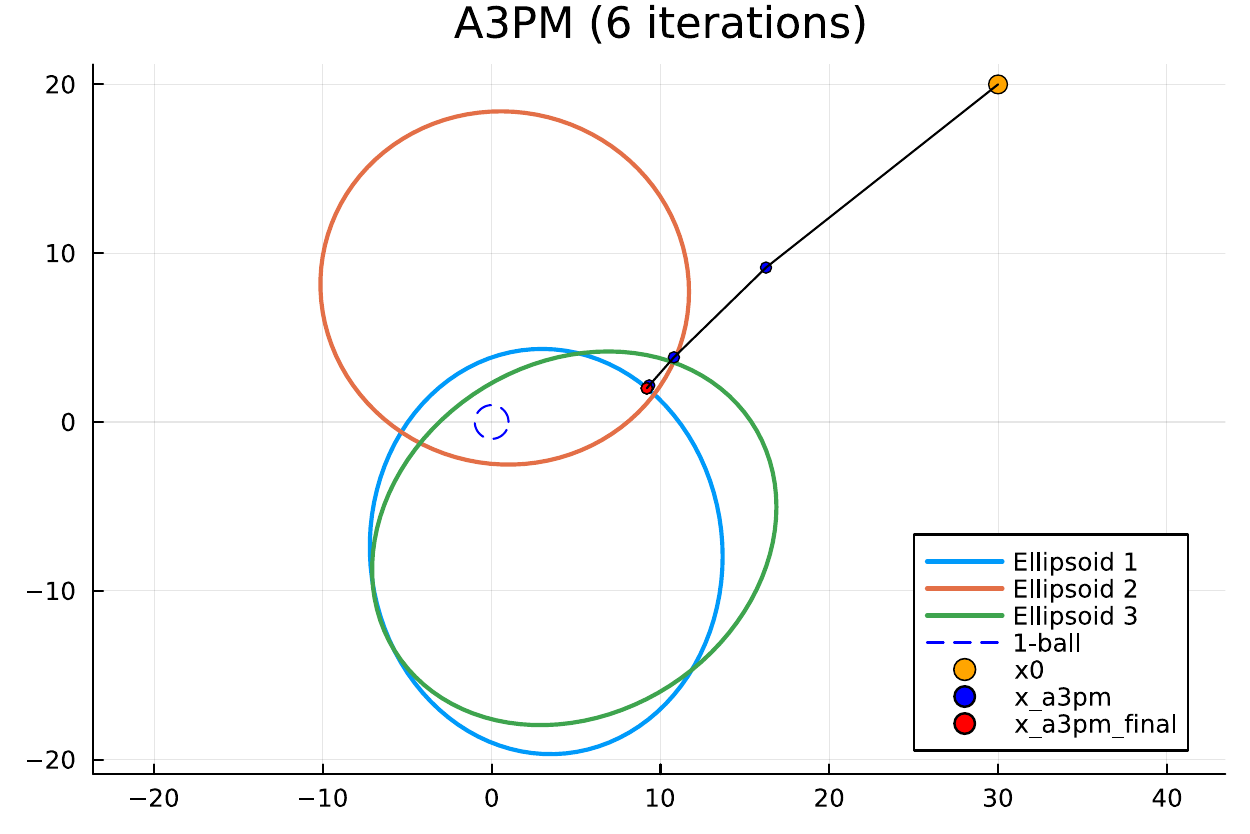}\\
\includegraphics[scale=0.4]{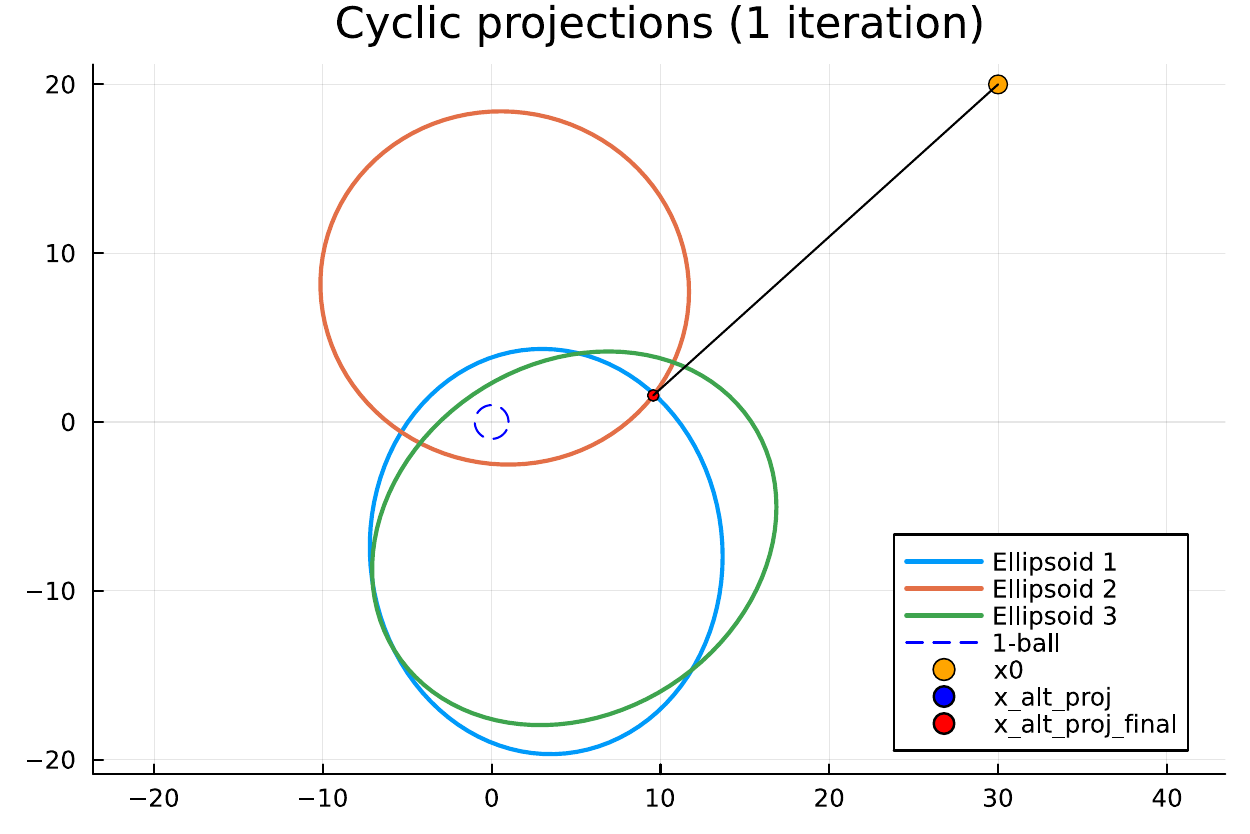}& \includegraphics[scale=0.4]{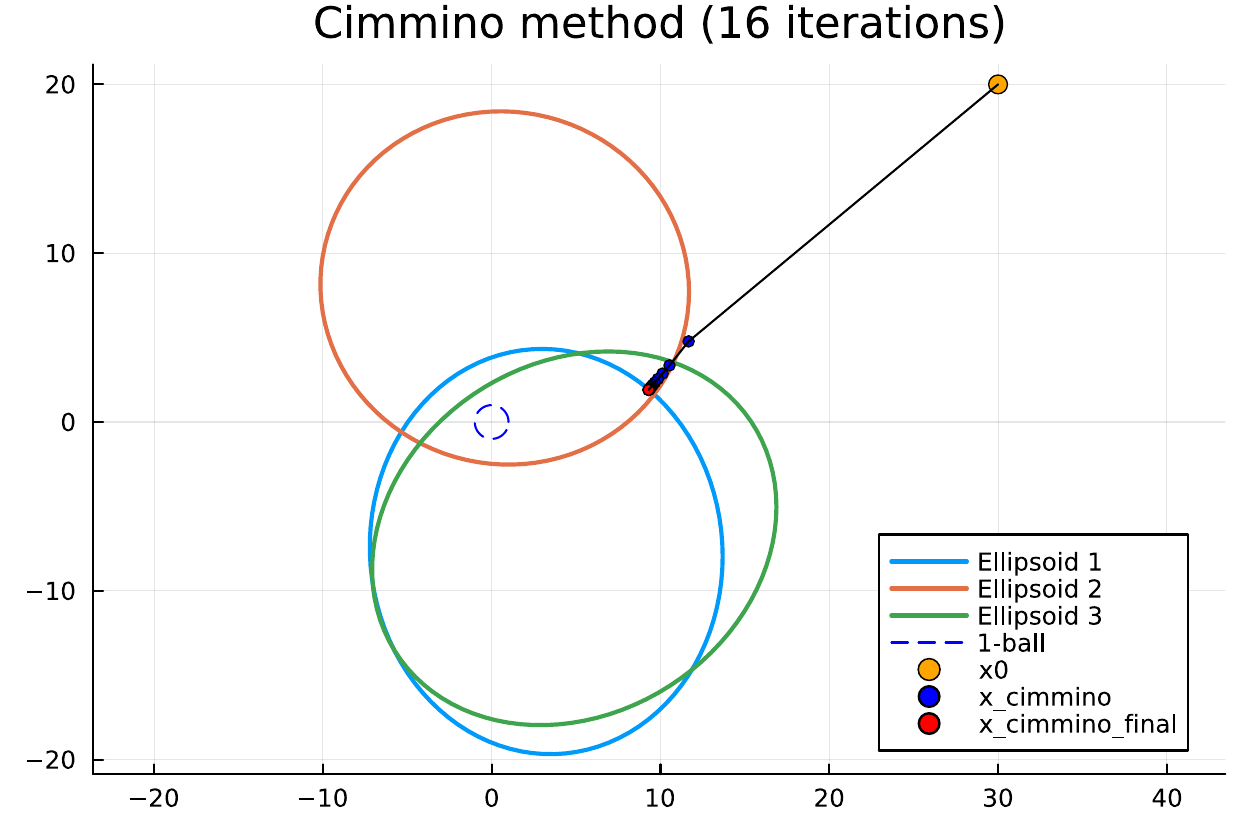}\\
\includegraphics[scale=0.4]{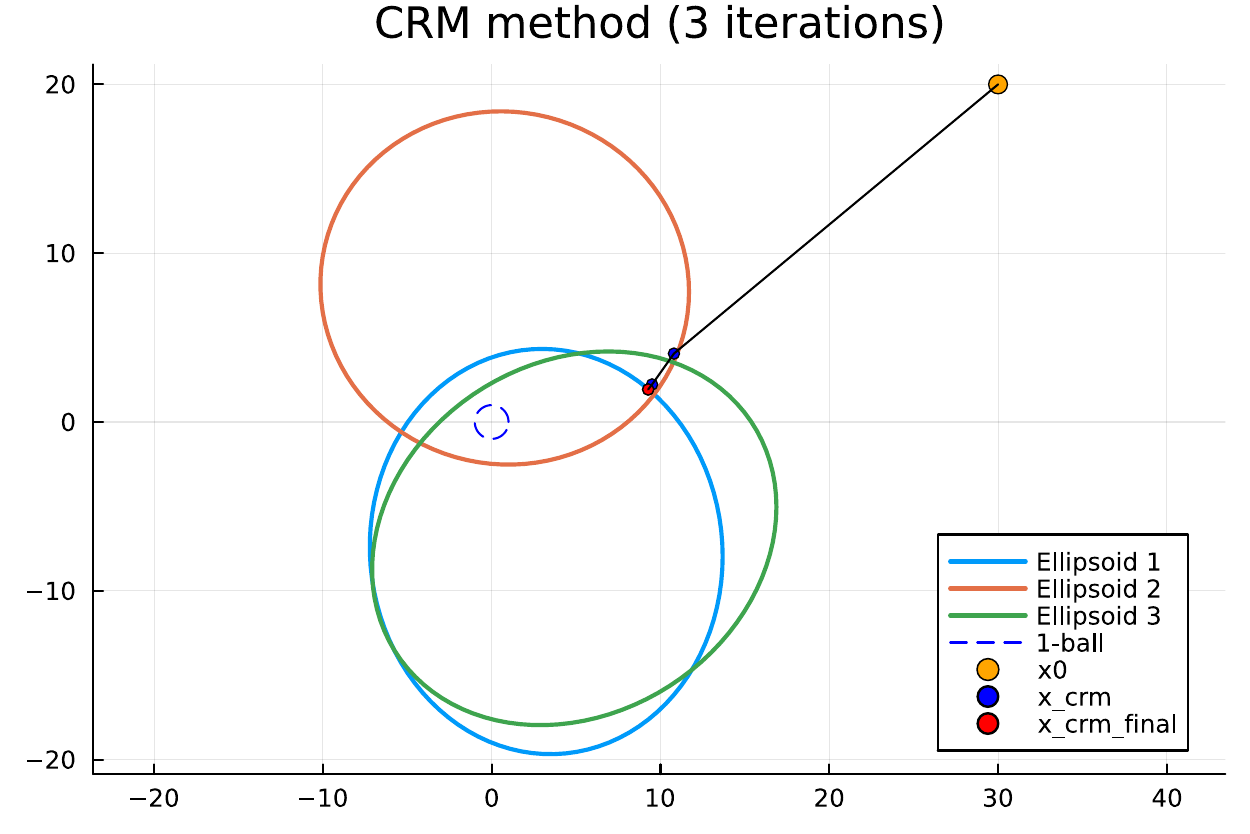}& \includegraphics[scale=0.4]{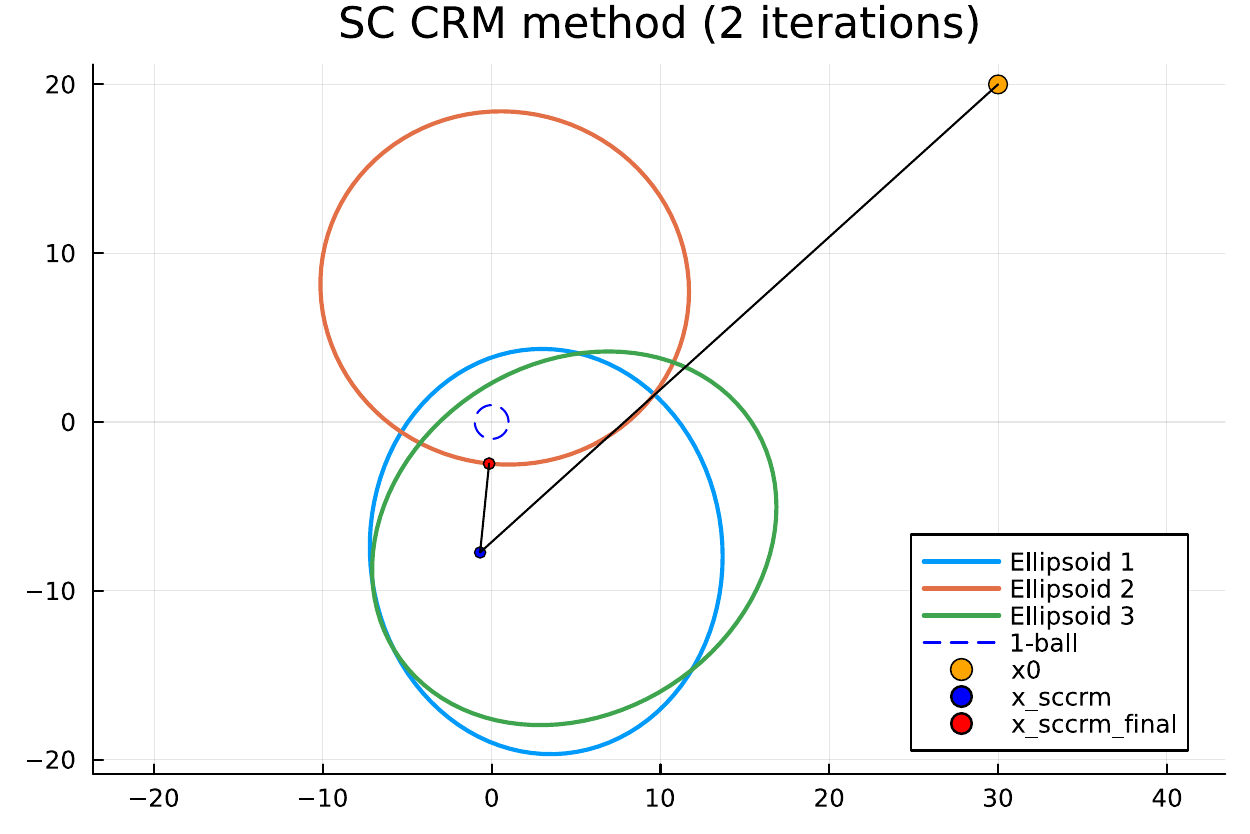}
\end{tabular}
    \caption{Illustration of 3PM, A3PM, cyclic projections, Cimmino's method, CRM, and SCCRM methods to find a point in the intersection of 3 ellipses in $\mathbb{R}^2$.}
    \label{fig:QCC}
\end{figure}

We now run the methods on 10 instances of our convex feasibility problem for
$(m,n)=(3,10)$, $(m,n)=(3,50)$, 
$(m,n)=(3,100)$, $(m,n)=(3,1000)$, 
$(m,n)=(10,100)$, $(m,n)=(10,500)$, 
$(m,n)=(10,1000)$, $(m,n)=(50,500)$, 
$(m,n)=(50,1000)$, $(m,n)=(100,1000)$. For this experiment, we report in Tables
\ref{tableres} and \ref{tableres1}, 
for the nine
methods described before, namely 3PM, 3PM $\parallel$,
A3PM, A3PM $\parallel$, Cyclic projections, Cimmino, Cimmino $\parallel$, SCCRM, and CRM, the number of iterations, the CPU time in seconds, and the constraints violation at termination. In these tables, we highlight
in bold and in red the quickest method for all problem instance. Some comments are in order:
\begin{itemize}
\item A3PM or A3PM $\parallel$ are the best methods in terms of CPU time, in 8 of the 10 instances.
\item In terms of CPU time, SCCRM is the best in two instances but very close to A3PM in one of these two.
\item Parallel implementations are in general quicker than their nonparallelized counterpart, but not always, due to time required to distribute the computations in different threads.
\item Cimmino and  CRM are  the worst
methods in terms of CPU time in these instances.
\item Though we have shown superlinear convergence of A3PM under some assumptions, in our instances A3PM did not perform well for large instances.
\end{itemize}
We also report in Figure \ref{fig:QCC2} the evolution of the
constraints violations along iterations for all methods and 5 of the instances.  We observe that Cimmino and A3PM tend to use more
iterations and the constraints violation decreases slower with these methods.
SCCRM needs in general few iterations, though it can sometimes require
a significant number of iterations before finding an
$\varepsilon$-approximate solution, see the instance
with $m=50$, $n=100$.

\begin{table}
\centering
\begin{tabular}{|c|c|c|c|c|c|}
 \hline
 Method & m &  n & Iterations &  CPU time (s) & Constraints violation\\
 \hline
 3PM&3 &10& 1 & 0.00337  &  -0.0025\\
 \hline
 3PM $\parallel$ & 3& 10&1 &  0.00259	& -0.0025   \\
 \hline
 {\textbf{\color{red}A3PM}} & 3&10&7 &   {\textbf{\color{red}0.00032}} 	& -0.7877  \\
 \hline
 A3PM $\parallel$ & 3&10&7 &   0.00070 &	-0.7877  \\
 \hline
Cyclic  & 3&10&1 & 0.00218	& -0.0025\\
 \hline
Cimmino  & 3&10& 48& 0.05752 & 	-7.5734\\
 \hline
Cimmino $\parallel$ & 3&10&48 & 0.05009	& -7.5734\\
 \hline
SCCRM  & 3&10&2 & 0.01476 & -0.0217\\
 \hline
 CRM & 3&10& 2& 0.00688 & -0.1923 \\
 \hline
 \hline 
   3PM& 3&50 &3& 0.05318  &	-56.3220 \\
 \hline
 3PM $\parallel$ & 3&50& 3 & 0.03118 & -56.3220\\
 \hline
 A3PM & 3&50& 8& 0.00098	&-167.1852 \\
 \hline
 {\textbf{\color{red}A3PM $\parallel$}} & 3&50&8 & {\textbf{\color{red}0.00078}}	&-167.1852\\
 \hline
Cyclic  & 3&50& 1 & 0.01374	&-0.2105\\
 \hline
Cimmino  & 3&50& 1& 0.01284&-415999.5440\\
 \hline
Cimmino $\parallel$ & 3&50 & 1 & 0.00707	&-415999.5440\\
 \hline
SCCRM  & 3&50& 1&0.02615	& -89742.6906\\
 \hline
 CRM & 3&50& 4 & 0.08202	&-1536.7657\\
 \hline
 \hline
 3PM& 3 &100 & 3& 0.14799 & 	-2466.1367  \\
 \hline
 3PM $\parallel$ & 3&100& 3& 0.08301	&-2466.1366 \\
 \hline
 A3PM & 3&100& 8& 0.00806	&-3386.0654\\
 \hline
 {\textbf{\color{red}A3PM $\parallel$}}  &3 &100&8 & {\textbf{\color{red}0.00170}}	&-3386.0654 \\
 \hline
Cyclic  & 3&100& 1& 0.04907	&-2.2873 \\
 \hline
Cimmino  & 3&100& 18&0.42693	&-228.6494\\
 \hline
Cimmino $\parallel$ &3 &100& 18& 0.47930&	-228.6494\\
 \hline
SCCRM  & 3&100&  2&0.10322	&-41251.7301 \\
 \hline
 CRM &3 &100&  3&0.25536	&-19558.2516 \\
 \hline
 \hline
  3PM& 3&1000&3 & 12.4355 & -1.41e7 \\
 \hline
 3PM $\parallel$ &3 &1000&3 & 9.2820	& -1.41e7\\
 \hline
  {\textbf{\color{red}A3PM}} & 3&1000& 5&  {\textbf{\color{red}0.1971}} & -1.18e7\\
 \hline
 A3PM $\parallel$ &3 &1000&5 & 0.3847  &	-1.18e7\\
 \hline
Cyclic  & 3&1000& 1& 5.3111 & -1.15e6\\
 \hline
Cimmino  & 3&1000&1 & 5.4527  &	-1.21e9\\
 \hline
Cimmino $\parallel$ & 3&1000&1 & 3.9068	& -1.21e9\\
 \hline
SCCRM  &3 &1000& 1& 7.0298 & -3.59e8\\
 \hline
 CRM &3 &1000& 2& 17.7935  &	-1.57e7\\
 \hline
\hline
3PM& 10&100&5 & 0.4371 & -10088.9 \\
 \hline
 3PM $\parallel$ &10 &100&5 & 0.3246  & -10088.9\\
 \hline
  {\textbf{\color{red}A3PM}} & 10&100& 9&  {\textbf{\color{red}0.0106}}	&-4062.4\\
 \hline
 A3PM $\parallel$ &10 &100&9 & 0.2111&	-4062.4\\
 \hline
Cyclic  & 10&100& 1& 0.0683&	-251.9\\
 \hline
Cimmino  &10 &100&39 & 1.2540&	-6397.1\\
 \hline
Cimmino $\parallel$ &10 &100& 39& 1.2472&	-6397.1\\
 \hline
SCCRM  & 10& 100&8 &0.0884&	-1870.3\\
 \hline
 CRM & 10&100&  4&0.6351&	-1981.7\\
 \hline
 \end{tabular}
\caption{Comparison on several instances (for several values of $m$ and $n$) of the number of iterations, CPU time, and constraints violation at termination for 3PM, parallel 3PM (3PM $\parallel$), A3PM, parallel A3PM (A3PM $\parallel$), cyclic projections, Cimmino's method, parallel Cimmino's method (Cimmino $\parallel$), Successive Centralized CRM (SCCRM), and CRM using $\varepsilon=10^{-8}$.}\label{tableres}
\end{table}

\begin{table}
\centering
\begin{tabular}{|c|c|c|c|c|c|}
 \hline
 Method & m &  n & Iterations &  CPU time (s) & Constraints violation\\
 \hline
 3PM&10 &500&   3  &  6.7066 &	-1.3e6\\
 \hline
 3PM $\parallel$ & 10& 500& 3&    5.0711	&-1.3e6 \\
 \hline
 {\textbf{\color{red}A3PM}} & 10&500 &8&   	{\textbf{\color{red}0.3801}}	&-1.1e6   \\
 \hline
 A3PM $\parallel$ & 10&500&  8   &	0.4590	&-1.1e6 \\
 \hline
Cyclic  & 10&500& 1& 	1.9664	&-20651.6 \\
 \hline
Cimmino  & 10&500& 11&  11.8282&	-258241.8 	\\
 \hline
Cimmino $\parallel$ & 10&500& 11& 	8.2947&	-258241.8 \\
 \hline
SCCRM  & 10&500& 1&  1.1339	&-1.94e7 \\
 \hline
 CRM & 10&500& 4&  14.5173	&-1.51e6 \\
 \hline
 \hline 
   3PM& 10&1000 &2&   35.7312 &	-3.1e7	 \\
 \hline
 3PM $\parallel$ & 10&1000& 2 &  31.5069&	-3.1e7\\
 \hline
 {\textbf{\color{red}A3PM}} & 10&1000&  8	& {\textbf{\color{red}1.5014}}&	-6.8e6\\
 \hline
 A3PM $\parallel$ & 10&1000&8 & 1.6410&	-6.8e6 	\\
 \hline
Cyclic  & 10&1000& 1 & 	12.2368&	-2.2e7\\
 \hline
Cimmino  & 10&1000& 34& 179.3706&	-2.8e6\\
 \hline
Cimmino $\parallel$ & 10&1000 &34   	&161.6030&	-2.8e6\\
 \hline
SCCRM  & 10&1000& 	1& 6.5463&	-2.1e8\\
 \hline
 CRM & 10&1000& 3 & 	67.8992&	-6.9e7\\
 \hline
 \hline
3PM& 50 &500 & 3&  37.6726 &	-3.7e6 	  \\
 \hline
 3PM $\parallel$ &50 &500&3 & 30.4509&	-3.7e6	 \\
 \hline
 A3PM & 50&500&  9	&  3.3924&	-431986.7 \\
 \hline
 {\textbf{\color{red}A3PM $\parallel$}}  &50 &500&9 & {\textbf{\color{red}2.3589}} &	-431986.7 \\
 \hline
Cyclic  & 50&500& 1& 4.1694&	-121892.8\\
 \hline
Cimmino  & 50&500& 1& 19.7370  &	-7.8e7\\
 \hline
Cimmino $\parallel$ &50 &500& 1&  17.5321 &	-7.8e7\\
 \hline
SCCRM  & 50&500& 49 &9.6754 &	-73627.1 \\
 \hline
 CRM &50 &500&   5	& 102.5789 &	-612568.5\\
 \hline
 \hline
  3PM& 50&1000&2 & 156.5059&	-3.8e7 \\
 \hline
 3PM $\parallel$ &50 &1000& 2& 147.1472&	-3.8e7\\
 \hline
 A3PM & 50&1000&8 & 8.2272	&-4.2e6\\
 \hline
 A3PM $\parallel$ &50 &1000&8 & 7.4691 &	-4.2e6\\
 \hline
Cyclic  & 50&1000& 1& 17.6827 &	-2.0e7\\
 \hline
Cimmino  & 50&1000&15  & 605.2691 &	5.5e7\\
 \hline
Cimmino $\parallel$ & 50&1000& 29  &600.4700 &	2.9e7\\
 \hline
{\textbf{\color{red}SCCRM}}  &50 &1000& 1& {\textbf{\color{red}7.3082}}&	-2.1e8\\
 \hline
 CRM &50 &1000& 3& 412.9610	&-8.7e6\\
 \hline
\hline
3PM& 100&1000&2 & 358.8288&	-3.6e7 \\
 \hline
 3PM $\parallel$ &100 &1000&2 & 294.6009&	-3.6e7\\
 \hline
 A3PM & 100&1000& 8& 21.6928&	-593678.9\\
 \hline
 A3PM $\parallel$ &100 &1000&8 & 17.3567&	-593678.9\\
 \hline
Cyclic  & 100&1000& 1& 24.4503	&-4.4e7\\
 \hline
Cimmino  &100 &1000& 100& 3118.5956 &	3.4e7\\
 \hline
Cimmino $\parallel$ &100 &1000& 100& 2670.5723 &	3.4e7\\
 \hline
{\textbf{\color{red}SCCRM}}  & 100& 1000& 1&{\textbf{\color{red}7.1063}} &	-5.4e7\\
 \hline
 CRM & 100&1000&3 & 873.6313 &	-9.4e6\\
 \hline
 \end{tabular}
\caption{Comparison on several instances (for several values of $m$ and $n$) of the number of iterations, CPU time, and constraints violation at termination for 3PM, parallel 3PM (3PM $\parallel$), A3PM, parallel A3PM (A3PM $\parallel$), cyclic projections, Cimmino's method, parallel Cimmino's method (Cimmino $\parallel$), Successive Centralized CRM (SCCRM), and CRM using $\varepsilon=10^{-8}$.}\label{tableres1}
\end{table}

\begin{figure}
    \centering
    \begin{tabular}{cc}
\includegraphics[scale=0.4]{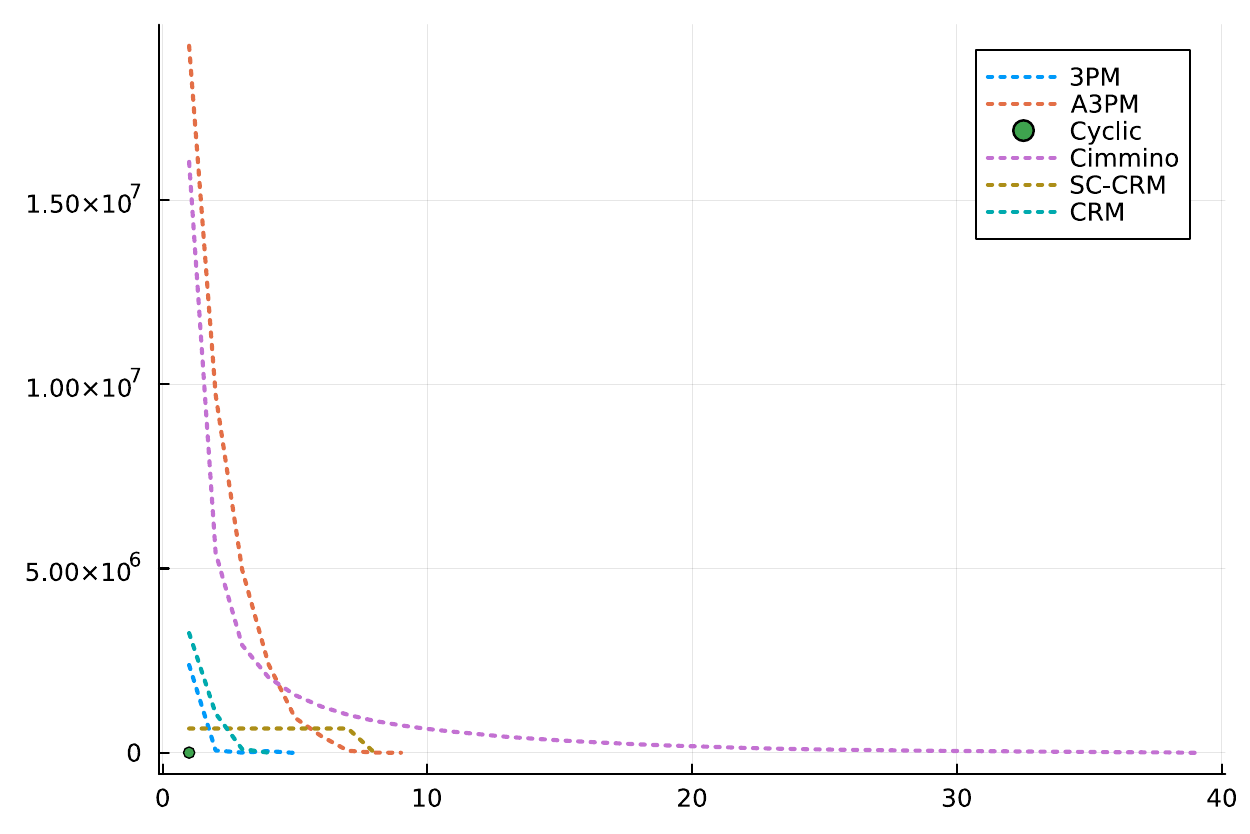}
¨& \includegraphics[scale=0.4]{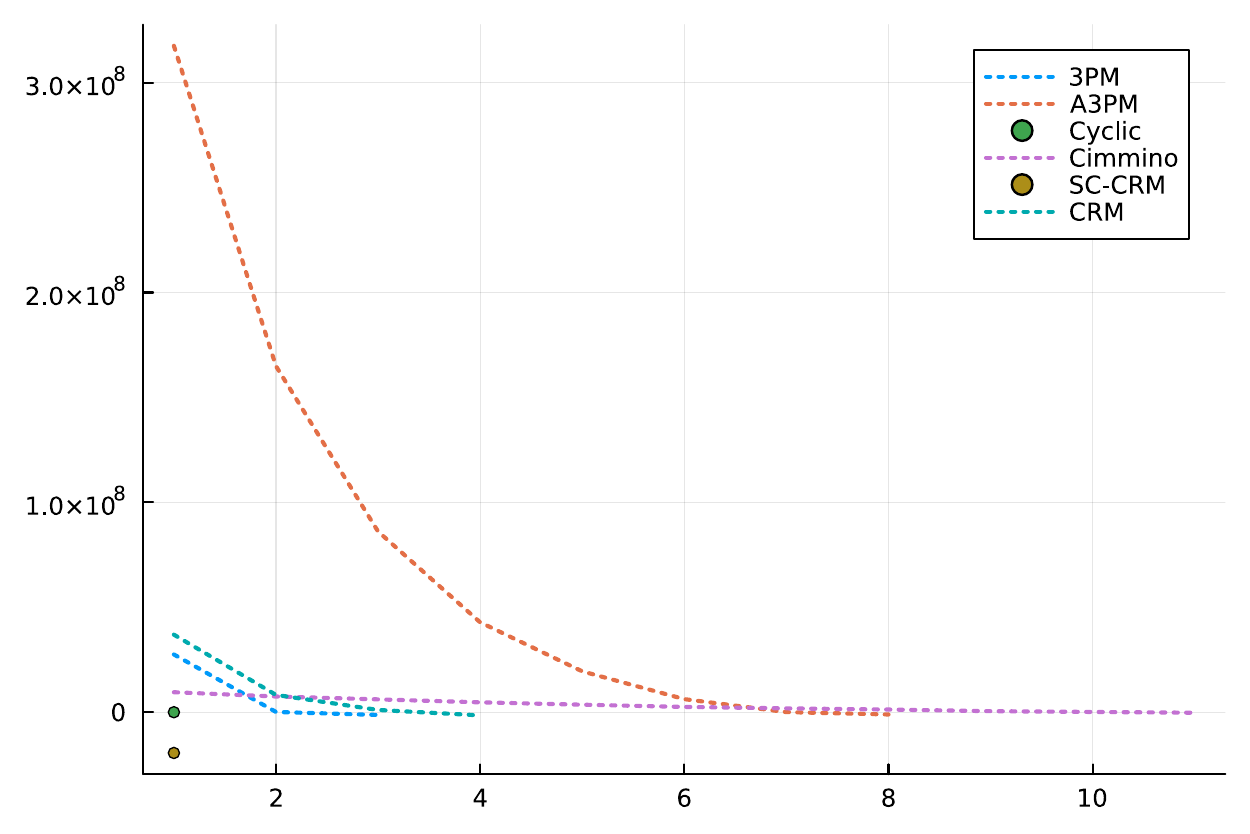}\\
$m=10, n=100$  & $m=10, n=500$\\
\includegraphics[scale=0.4]{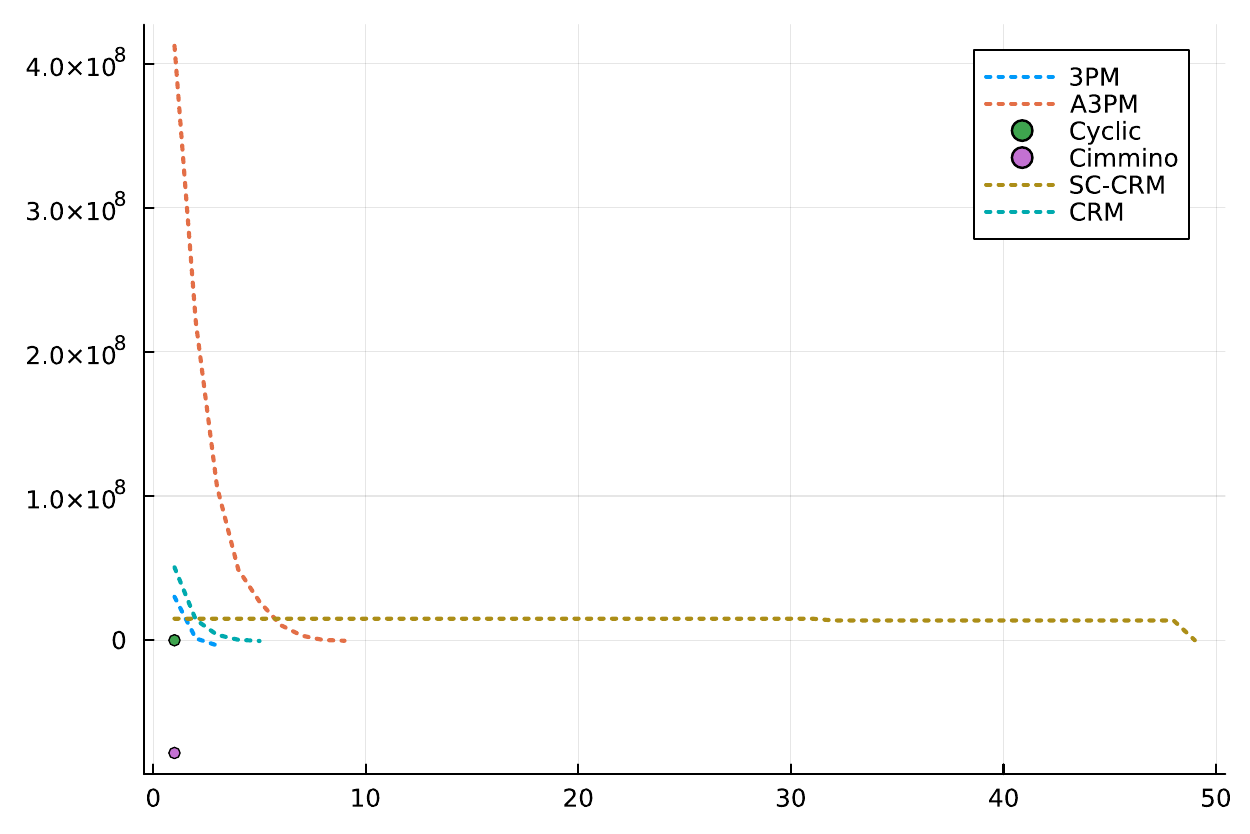}& 
\includegraphics[scale=0.4]{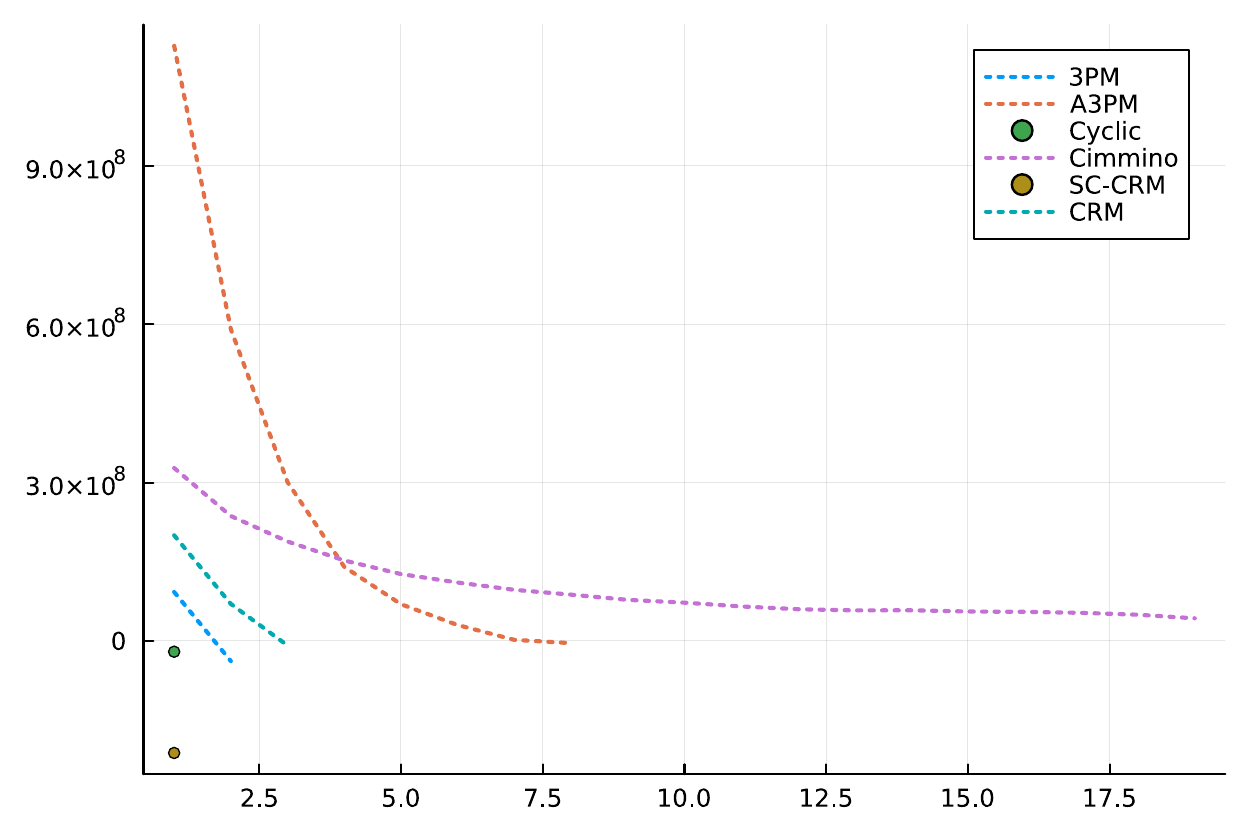}\\
$m=50, n=500$& $m=50, n=1000$
\end{tabular}
    \begin{tabular}{c}
\includegraphics[scale=0.4]{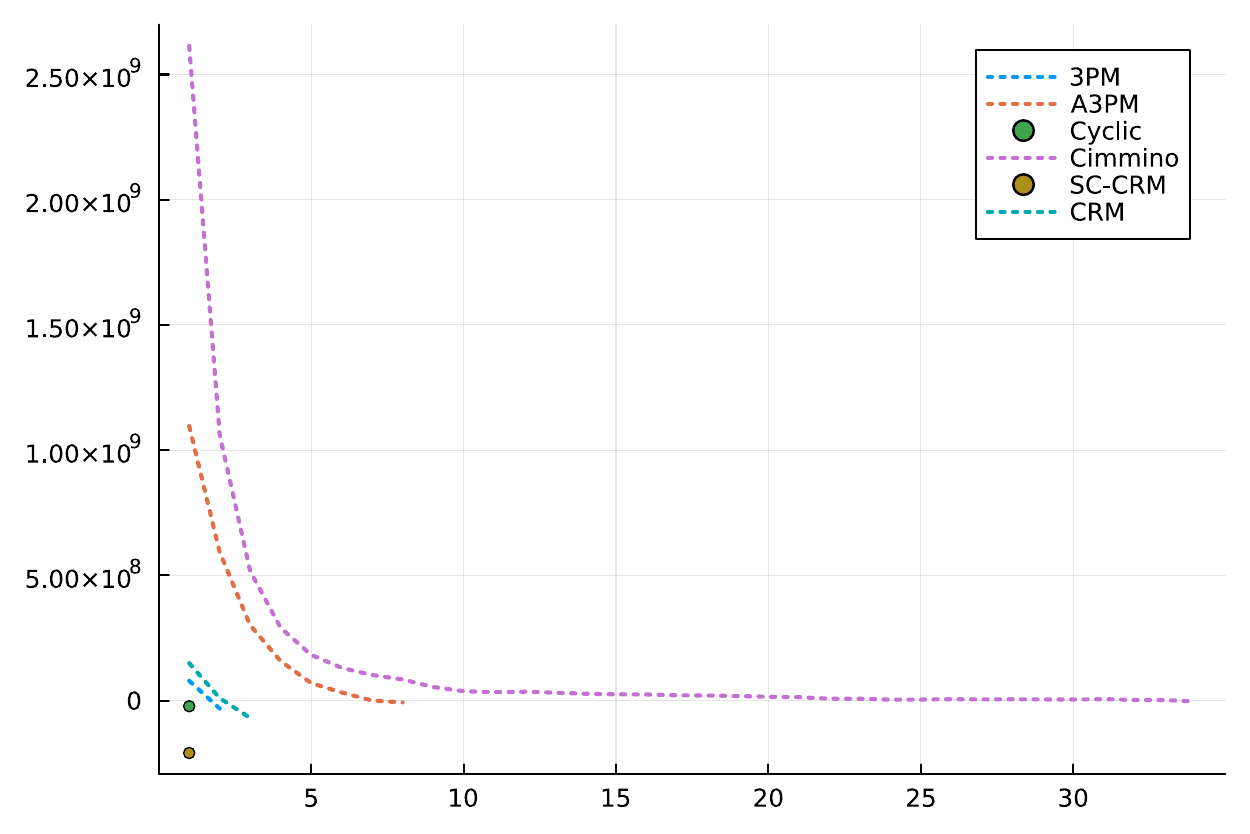}\\
$m=10, n=1000$
\end{tabular}
    \caption{Evolution of the constraints violations along iterations of the methods for several instances of the convex feasibility problem with several values of the number $m$ of sets and problem dimension $n$.}
    \label{fig:QCC2}
\end{figure}

We then consider among the 10 instances
chosen before the 7 largest 
and run each of the 9 methods ten times on these instances.
For each instance and each method, we report
in Table \ref{tableres2}.
the average CPU time (over the 10 runs)
required to find an
$\varepsilon$-approximate solution.
We report in red (resp. blue) and in bold the best (resp. second best)
method in terms of CPU time.
We see that A3PM and A3PM $\parallel$
are always either the best or second best
in terms of CPU time with A3PM being the
best in 6 of the 7 chosen problem instances.  For $m=50$ and $n=1000$, in the corresponding experiment from Table \ref{tableres1}, SCCRM was the
fastest while it is now (taking an average CPU time over 10 runs instead of using a single run of the method) the third best.
For the largest instance, SCCPM is still the best, needing 7.12 seconds
for convergence while the second best A3PM $\parallel$
needs 11.23 seconds. However, this experiment was still done with
the high accuracy $\varepsilon=10^{-8}$.
Since, every iteration of A3PM and A3PM $\parallel$
is very fast, we expect the CPU time with these methods
to decrease when $\varepsilon$ increases.
Therefore, we conducted the final following experiment: for the largest
instance where SCCRM was the fastest with $\varepsilon=10^{-8}$, we
consider three other values of $\varepsilon$: $\varepsilon=10^5$,
$\varepsilon=10^4$, and
$\varepsilon=10^{-3}$.
The CPU time with A3PM, A3PM $\parallel$, and SCCRM for this largest
instance and these values of $\varepsilon$ is reported in Table
\ref{tableres3}, with the quickest CPU time reported in bold red for each value of $\varepsilon$. We observe that for the largest value of
$\varepsilon$, both A3PM and A3PM $\parallel$ are now
quicker than SCCRM. Therefore, A3PM or A3PM $\parallel$
were the quickest as long as the problem instance is not very
large and for our largest instance if the accuracy is loose.

\begin{table}
\centering
\begin{tabular}{|c|c|c|c|c|c|c|c|c|c|}
 \hline
Problem sizes & 3PM & 3PM $\parallel$ & A3PM  & A3PM $\parallel$ &Cyclic&Cimmino&Cimmino $\parallel$&SCCRM&CRM\\ 
 \hline
$m=3, n=1000$ & 11.66 &8.74 & {\textbf{\color{blue}0.14}}& {\textbf{\color{red}0.13}}&5.08 &5.25 &3.49 &6.67 &17.02 \\   
 \hline
$m=10, n=100$ & 0.42 & 0.16&{\textbf{\color{blue}0.0068}} &{\textbf{\color{red}0.0051}} &0.065 &1.22 &1.23 &0.09 &0.62 \\   
 \hline
 $m=10, n=500$ & 6.16 & 4.58& {\textbf{\color{blue}0.33}}&  {\textbf{\color{red}0.20}} & 1.76  & 10.60   &7.39   & 1.19   &14.47 \\   
 \hline 
 $m=10, n=1000$& 34.87 &28.80  &{\textbf{\color{blue}1.15}} &{\textbf{\color{red}0.93}} &12.08 &177.74 &150.59 &6.64 & 67.69 \\   
 \hline
  $m=50, n=500$ & 28.27 &22.68 &{\textbf{\color{blue}2.04}} &{\textbf{\color{red}1.75}} &3.20 &14.83 &11,99 &6.90 &88.14\\   
 \hline
 $m=50, n=1000$ & 149.78 & 132.04& {\textbf{\color{blue}6.78}}&{\textbf{\color{red}6.65}} &16.24 &603.64 &601.01 &7.13 &364.77 \\   
 \hline
 $m=100, n=1000$ & 270.92 &242.79 &13.27 &  {\textbf{\color{blue}11.23}} &19.94 & 706.82& 646.04& {\textbf{\color{red}7.12}}&757.34 \\   
 \hline
 \end{tabular}
\caption{Comparison for several values of $m$ and $n$ of the 
average CPU time of all methods over 10 instances for $\varepsilon=10^{-8}$.}\label{tableres2}
\end{table}

\begin{table}
\centering
\begin{tabular}{|c|c|c|c|c|c|c|}
 \hline
Method & m & n &  $\varepsilon=100\,000$ & $\varepsilon=10\,000$   & $\varepsilon=10^{-3}$  & $\varepsilon=10^{-8}$\\ 
 \hline
SCCRM  &100&1000  &6.8563&{\color{red}7.0134}&{\color{red}7.2412}&{\color{red}7.1063}\\   
 \hline
A3PM &100&1000& 5.3124 &   9.8971  &  12.9859 &    21.6928\\
 \hline
 A3PM  $\parallel$&100&1000 &  {\color{red}3.4657} &  7.5412 & 11.1016&17.3567\\
 \hline 
 \end{tabular}
\caption{Comparison for $m=100$ and $n=1000$ of the 
CPU time for A3PM, A3PM $\parallel$,  and SCCRM for several values of $\varepsilon$.}\label{tableres3}
\end{table}

\section{Concluding Remarks}

We have proposed and analyzed the Parallel Polyhedral Projection Method (3PM) for solving convex feasibility problems. The method combines the computational advantages of parallel projections with a geometric correction step via projection onto a polyhedron, leading to a simple yet powerful two-phase algorithm.

Theoretical results include global convergence under no regularity assumptions, linear convergence under a standard error bound condition, and superlinear convergence when the sets and iterations of the algorithm exhibit additional geometric regularity. These results remain valid even when projections are computed approximately, making the method robust in practical scenarios.

Numerical experiments demonstrate that A3PM outperforms several classical projection schemes, especially in multi-set settings, and scales favorably as the number of constraint sets increases.

\bibliographystyle{plain}
\bibliography{references}

\begin{thebibliography}{10}

\bibitem{AragonArtacho2014}
F.~J.~Arag{\'o}n Artacho, J.~M. Borwein, and M.~K. Tam.
\newblock Recent results on douglas--rachford methods for combinatorial optimization problems.
\newblock {\em Journal of Optimization Theory and Applications}, 163(1):1--30, 2014.

\bibitem{AragonArtacho2018}
Francisco J.~Arag{\'o}n Artacho and Rub{\'e}n Campoy.
\newblock A new projection method for finding the closest point in the intersection of convex sets.
\newblock {\em Computational Optimization and Applications}, 69(1):99--132, 2018.

\bibitem{AragónArtacho04072019}
Francisco J.~Aragón Artacho, Yair Censor, and Aviv~Gibali and.
\newblock The cyclic douglas–rachford algorithm with r-sets-douglas–rachford operators.
\newblock {\em Optimization Methods and Software}, 34(4):875--889, 2019.

\bibitem{barros2025parallelizing}
Pablo Barros, Roger Behling, Vincent Guigues, and Luiz-Rafael Santos.
\newblock Parallelizing the circumcentered reflection method, 2025.
\newblock Preprint, available at \url{https://arxiv.org/abs/2505.17258}.

\bibitem{BauschkeBorwein1993}
H.~Bauschke and J.~M. Borwein.
\newblock On the convergence of von neumann’s alternating projection algorithm for two sets.
\newblock {\em Set-Valued Analysis}, 1(2):185--212, 1993.

\bibitem{BauschkeBorwein1996}
H.~Bauschke and J.~M. Borwein.
\newblock On projection algorithms for solving convex feasibility problems.
\newblock {\em SIAM Review}, 38(3):367--426, 1996.

\bibitem{Bauschke2017}
H.~Bauschke and Patrick~L. Combettes.
\newblock {\em Convex Analysis and Monotone Operator Theory in Hilbert Spaces}.
\newblock Springer, Cham, 2nd edition, 2017.

\bibitem{BauschkeOuyangWang2018}
H.~Bauschke, Hao Ouyang, and Xianfu Wang.
\newblock On circumcenters of finite sets in hilbert spaces.
\newblock {\em Linear and Nonlinear Analysis}, 4:271--295, 2018.

\bibitem{BauschkeOuyangWang2022}
H.~Bauschke, Hao Ouyang, and Xianfu Wang.
\newblock Best approximation mappings in hilbert spaces.
\newblock {\em Mathematical Programming}, 195:855--901, 2022.

\bibitem{Bauschke2016}
H.~H. Bauschke, J.~Y.~Bello Cruz, T.~T.~A. Nghia, H.~M. Phan, and X.~Wang.
\newblock Optimal rates of linear convergence of relaxed alternating projections and generalized douglas–rachford methods for two subspaces.
\newblock {\em Numerical Algorithms}, 73(1):33--76, 2016.

\bibitem{BauschkeMoursi2016}
H.~H. Bauschke and W.~M. Moursi.
\newblock On the douglas--rachford algorithm.
\newblock {\em Mathematical Programming}, 164(1-2):263--284, 2017.

\bibitem{crm2}
R.~Behling, J.~Y. Bello-Cruz, and L.~R. Santos.
\newblock On the linear convergence of the circumcentered-reflection method.
\newblock {\em Operations Research Letters}, \textbf{46}(2):159–162, 2018.

\bibitem{crm_block}
R.~Behling, J.~Y. Bello-Cruz, and L.~R. Santos.
\newblock The block-wise circumcentered–reflection method.
\newblock {\em Computational Optimization and Applications}, \textbf{76}:675–699, 2020.

\bibitem{BehlingBelloCruzIusemSantos2024}
R.~Behling, Y.~Bello-Cruz, A.~N. Iusem, and L.~R. Santos.
\newblock On the centralization of the circumcentered reflection method.
\newblock {\em Mathematical Programming}, 205:337--371, 2024.

\bibitem{crm1}
R.~Behling, Y.~Bello-Cruz, and L.~R. Santos.
\newblock Circumcentering the douglas–rachford method.
\newblock {\em Numerical Algorithms}, \textbf{78}(3):759–776, 2018.

\bibitem{crmprod}
R.~Behling and Y.~Bello-Cruz and ~L.R. Santos.
\newblock On the circumcentered-reflection method for the convex feasibility problem.
\newblock {\em Numerical Algorithms}, 86:1475–--1494, 2021.

\bibitem{BorweinSims2011}
Jonathan~M. Borwein and Brailey Sims.
\newblock The douglas--rachford algorithm in the absence of convexity.
\newblock In {\em Fixed-Point Algorithms for Inverse Problems in Science and Engineering}, pages 93--109. Springer, New York, 2011.

\bibitem{Cimmino1938}
G.~Cimmino.
\newblock Calcolo approssimato per le soluzioni dei sistemi di equazioni lineari.
\newblock {\em Ricerca Scientifica}, 9(II):326--333, 1938.

\bibitem{Combettes1996}
P.~L. Combettes.
\newblock The convex feasibility problem in image recovery.
\newblock In P.~W. Hawkes, editor, {\em Advances in Imaging and Electron Physics}, volume~95, pages 155--270. Elsevier, San Diego, 1996.

\bibitem{combettes2011proximal}
P.L. Combettes and J.-C. Pesquet.
\newblock Proximal splitting methods in signal processing.
\newblock In H.H. Bauschke, R.S. Burachik, P.L. Combettes, V.~Elser, D.R. Luke, and H.~Wolkowicz, editors, {\em Fixed-Point Algorithms for Inverse Problems in Science and Engineering}, volume~49 of {\em Springer Optimization and Its Applications}, pages 185--212. Springer, 2011.

\bibitem{doi:10.1137/140961444}
Iain~S. Duff, Ronan Guivarch, Daniel Ruiz, and Mohamed Zenadi.
\newblock The augmented block cimmino distributed method.
\newblock {\em SIAM Journal on Scientific Computing}, 37(3):A1248--A1269, 2015.

\bibitem{Fukushima1983}
M.~Fukushima.
\newblock An outer approximation algorithm for solving general convex programs.
\newblock {\em Operations Research}, 31(1):101--113, 1983.
\newblock Accessed: 25~Apr.~2025.

\bibitem{doi:10.1137/23M1567503}
Deren Han, Yansheng Su, and Jiaxin Xie.
\newblock Randomized douglas–rachford methods for linear systems: Improved accuracy and efficiency.
\newblock {\em SIAM Journal on Optimization}, 34(1):1045--1070, 2024.

\bibitem{Kolobov2022}
Victor~I. Kolobov, Simeon Reich, and Rafa{\l} Zalas.
\newblock Finitely convergent deterministic and stochastic iterative methods for solving convex feasibility problems.
\newblock {\em Mathematical Programming}, 194(1):1163--1183, 2022.

\bibitem{Kruger2018}
Alexander~Y. Kruger.
\newblock About intrinsic transversality of pairs of sets.
\newblock {\em Set-Valued and Variational Analysis}, 26(1):111--142, 2018.

\bibitem{li2016douglas}
G.~Li and T.~K. Pong.
\newblock Douglas--rachford splitting for nonconvex optimization with application to nonconvex feasibility problems.
\newblock {\em Mathematical Programming}, 159:371--401, 2016.

\bibitem{math12050675}
Tianle Lu and Xue Zhang.
\newblock An inertial parametric douglas–rachford splitting method for nonconvex problems.
\newblock {\em Mathematics}, 12(5), 2024.

\bibitem{doi:10.1137/18M1167061}
Ion Necoara, Peter Richt\'{a}rik, and Andrei Patrascu.
\newblock Randomized projection methods for convex feasibility: Conditioning and convergence rates.
\newblock {\em SIAM Journal on Optimization}, 29(4):2814--2852, 2019.

\bibitem{Phan2016}
H.~M. Phan.
\newblock Linear convergence of the douglas--rachford method for two closed sets.
\newblock {\em Optimization}, 65(2):369--385, 2016.

\bibitem{succccrm}
A.~Iusem D.~Liu R.~Behling, Y. Bello-Cruz and L-R Santos.
\newblock A successive centralized circumcentered-reflection method for the convex feasibility problem.
\newblock {\em Computational Optimization and Applications}, 87:83--116, 2024.

\bibitem{rockafellar1997convex}
R.~Tyrrell Rockafellar.
\newblock {\em Convex Analysis}.
\newblock Princeton Landmarks in Mathematics and Physics. Princeton University Press, Princeton, NJ, 10th edition, 1997.

\bibitem{scolnik2008incomplete}
H.~D. Scolnik, N.~Echebest, M.~T. Guardarucci, et~al.
\newblock Incomplete oblique projections for solving large inconsistent linear systems.
\newblock {\em Mathematical Programming}, 111:273--300, 2008.

\bibitem{shen2022penalized}
J.~Shen, J.~S. Chen, H.~D. Qi, et~al.
\newblock A penalized method of alternating projections for weighted low-rank hankel matrix optimization.
\newblock {\em Mathematical Programming Computation}, 14:417--450, 2022.

\bibitem{doi:10.1137/21M1453475}
F.~Sukru Torun, Murat Manguoglu, and Cevdet Aykanat.
\newblock Enhancing block cimmino for sparse linear systems with dense columns via schur complement.
\newblock {\em SIAM Journal on Scientific Computing}, 45(2):C49--C72, 2023.

\bibitem{Wang2017}
J.~Wang, Y.~Hu, C.~Li, and J.-C. Yao.
\newblock Linear convergence of cq algorithms and applications in gene regulatory network inference.
\newblock {\em Inverse Problems}, 33(5):055017, 2017.

\end{thebibliography}

\end{document}